\documentclass{amsart}

\setlength{\textwidth}{\paperwidth}
\addtolength{\textwidth}{-2.5in}
\calclayout
\usepackage{amssymb, amsmath}
\usepackage{mathrsfs}
\usepackage{mathtools}
\usepackage{amscd}
\usepackage{verbatim}

\usepackage{wasysym}

\usepackage[utf8]{inputenc}
\usepackage{enumerate}
\usepackage{url}

\usepackage[colorinlistoftodos,prependcaption,textsize=tiny]{todonotes}

\usepackage{cite}



\usepackage[colorlinks,linkcolor={blue},citecolor={blue},urlcolor={red},]{hyperref}

\allowdisplaybreaks

\usepackage{ulem}

\theoremstyle{plain}
\newtheorem{theorem}{Theorem}
\theoremstyle{remark}
\newtheorem{remark}[theorem]{Remark}
\newtheorem{example}[theorem]{Example}

\theoremstyle{plain}
\newtheorem{corollary}[theorem]{Corollary}
\newtheorem{lemma}[theorem]{Lemma}
\newtheorem{proposition}[theorem]{Proposition}
\newtheorem{definition}[theorem]{Definition}

\newtheorem{assumption}[theorem]{Assumption}
\newtheorem{assumptions}[theorem]{Assumptions}

\numberwithin{theorem}{section}

\numberwithin{equation}{section}


\def\N{{\mathbb N}}

\def\R{{\mathbb R}}


\newcommand{\E}{{\mathbb E}}
\renewcommand{\P}{{\mathbb P}}

\newcommand{\F}{{\mathscr F}}






\newcommand{\D}{{\mathcal D}}
\newcommand{\calL}{{\mathcal L}}

\newcommand{\one}{{{\bf 1}}}

\newcommand{\lb}{\langle}
\newcommand{\rb}{\rangle}

\newcommand{\T}{\mathbb{T}}
\newcommand{\wt}{\widetilde}
\renewcommand{\emptyset}{\varnothing}



\newcommand {\Distr}{\mathcal{D}}

\newcommand{\Tr}{{\rm Tr}}

\newcommand{\dint}{\mathrm{d}}
\newcommand{\fterm}{\E\Big(\int_0^T f(s) \dint s \Big)^{\frac{p}{2}}}

\newcommand{\condref}[1]{\textnormal{({\color{blue}H\ref{#1}})}}

\DeclareMathOperator{\DIV}{div}

\usepackage{xcolor}

\begin{document}

\title[Higher order moments for SPDE with monotone nonlinearities]{Higher order moments for SPDE with monotone nonlinearities}

\author{Manuel V. Gnann}
\address[Manuel V. Gnann]{Delft Institute of Applied Mathematics\\
    Delft University of Technology \\ P.O. Box 5031\\ 2600 GA Delft\\The
    Netherlands} \email{M.V.Gnann@tudelft.nl}

\author{Jochem Hoogendijk}
\address[Jochem Hoogendijk]{Mathematical institute\\
    Utrecht University \\ P.O. Box 80010 \\ 3508 TA Utrecht\\ The
    Netherlands} \email{j.p.c.hoogendijk@uu.nl}

\author{Mark C. Veraar}
\address[Mark C. Veraar]{Delft Institute of Applied Mathematics\\
    Delft University of Technology \\ P.O. Box 5031\\ 2600 GA Delft\\The
    Netherlands} \email{M.C.Veraar@tudelft.nl}

\thanks{This work is based on the second author's master's thesis in applied mathematics, prepared under the supervision of the first and third author at Delft University of Technology.}

\keywords{Stochastic evolution equations, higher order moments, SPDE, monotone operators, variational, coercivity, heat equation, Burgers, Navier-Stokes, $p$-Laplace equation}
\date\today


\subjclass[2010]{Primary: 60H15, Secondary: 35R50, 47H05, 47J35}

\begin{abstract}
    This paper introduces a new $p$-dependent coercivity condition through which $L^p$-moments for solutions can be obtained for a large class of SPDEs in the variational framework. If $p=2$, our condition reduces to the classically coercivity condition, which only yields second moments for the solution. The abstract result is shown to be optimal. Moreover, the results are applied to obtain $L^p$-moments of solutions for several classical SPDEs such as stochastic heat equations with Dirichlet and Neumann boundary conditions, Burgers' equation and the Navier-Stokes equations in two spatial dimensions. Furthermore, we can recover recent results for systems of SPDEs and higher order SPDEs using our unifying coercivity condition.
\end{abstract}
\maketitle

\section{Introduction}
In this paper we introduce a new coercivity condition through which one can obtain estimates for higher order moments for stochastic partial differential equations (SPDEs) of the form
\begin{equation}\label{eq:variationalintro}
    \dint u(t) = A(t, u(t)) \dint t + B(t, u(t)) \dint W(t), \qquad u(0) = u_0.
\end{equation}
Here $W$ is a $U$-cylindrical Brownian motion. We will be concerned with the so-called {\em variational} or {\em monotone operator approach} to SPDEs in Hilbert spaces. In particular, we assume that $(V, H, V^*)$ is a Gelfand triple, where $H$ is a separable Hilbert space and $V$ a reflexive Banach space.

The variational approach for SPDEs was introduced in 1972 by Bensoussan and Temam using time discretizations methods \cite{bensoussan_equations_1972}. Pardoux improved the latter via Lions' approach for PDEs in \cite{pardoux_equations_1975}. In this approach, Galerkin approximations are used together with a priori energy estimates to obtain existence and uniqueness. Since then, both Krylov and Rozovskii \cite{krylov_1979} and Liu and R\"{o}ckner \cite{rockner_2010, Rockner_SPDE_2015} have extended this approach even further by allowing monotone and locally monotone operators, respectively, as the driving part of the equation.

An advantage of the variational approach is that it directly applies to nonlinear equations. Another key property is that it typically gives global existence and uniqueness at once, and there is often no need to check any further blow-up criteria for the solution. When combined with other approaches this can be very effective (see e.g.\ \cite{AV20_NS} for the stochastic Navier-Stokes equations).

Each of the above papers assumes a coercivity condition on $(A,B)$ of the form (see Section~\ref{sec:main} for explanation on the notation):
\begin{equation}\label{eq:coercivityintro}
    2\langle A(t, v), v \rangle + \|B(t, v)\|_{\calL_2(U, H)}^2 \leq -\theta\|v\|_V^2 +  K\|v\|_H^2 + f(t).
\end{equation}
Note that $B(t, \cdot)$ is allowed to be defined on the smallest space $V$. In the above mentioned results for the variational approach to \eqref{eq:variationalintro} one obtains estimates for
\begin{equation}\label{eq:toestpintro}
    \E\sup_{t\in [0,T]} \|u(t)\|^p_H \ \ \text{and} \ \ \E\|u\|_{L^2(0,T;V)}^p,
\end{equation}
but only for $p=2$. Estimates for $p>2$ are not available unless $B$ is assumed to be defined on $H$ instead of $V$ (see \cite[Section 5]{Rockner_SPDE_2015}).
An attempt to treat more general $p\geq 2$ (and even $p<2$) was made in \cite{veraar_2012} by Brze\'{z}niak and the third author. Here it also turned out that the classical coercivity condition is not strong enough to obtain finite $L^p$-moments. The paper \cite{veraar_2012} only considers a simplified setting. Therefore, it was enlightening to see that in \cite{neelima_2020} by Neelima and \v{S}i\v{s}ka, some results can be proved in a general monotone setting. However, the $L^p$-bounds proved there are only sub-optimal (see Remark~\ref{rem:comparison} for details), and the coercivity condition they used seems too restrictive in some cases, which becomes clear further below and in the presented applications.

In the current paper we obtain a complete generalization of the classical monotone operator framework leading to estimates for \eqref{eq:toestpintro} for $p>2$. From \cite{veraar_2012} it follows that the terms in \eqref{eq:toestpintro} are infinite for $p>2$. Therefore, a restriction is necessary. The key ingredient turns out to be the following $p$-dependent coercivity condition:
\begin{equation}\label{eq:coercivityintrop}
\begin{aligned}
    2\langle A(t, v), v \rangle + \|B(t, v)\|^2_{\calL_2(U, H)} + (p-2)&\frac{\|B(t, v)^*v\|_U^2}{\|v\|_H^2} \\ & \leq -\theta\|v\|_V^\alpha + K_c\|v\|_H^2 + f(t).
\end{aligned}
\end{equation}
Our main result (Theorem~\ref{Main_theorem}) states that under \eqref{eq:coercivityintrop} and the usual conditions in the monotone operator framework, one can estimate the norms in \eqref{eq:toestpintro}. Note that \eqref{eq:coercivityintrop} reduces to \eqref{eq:coercivityintro} if $p=2$. In Example~\ref{ex:optimal} we use a specific choice suggested in \cite{veraar_2012} to show that \eqref{eq:coercivityintrop} is optimal. The proof of the main result is elementary, but quite tedious. In some cases we give explicit constants in the obtained estimates for the moments.

An interesting special case occurs if $B(t, v)^*v = 0$, since then the $p$-dependent term in \eqref{eq:coercivityintrop} vanishes and we get estimates for all $p\geq 2$. This typically occurs for differential operators of odd order with suitable boundary conditions. In some cases we can even let $p\to \infty$ to obtain uniform estimates in $\Omega$.

In Section~\ref{sec:appl} we consider applications to the stochastic heat equation with Dirichlet and Neumann boundary conditions, Burgers' equation, the stochastic Navier-Stokes equations  in dimension two, systems, higher order equations, and the $p$-Laplace equation.

\section{Setting and main result}\label{sec:main}
Before we state our main result we fix our notation and terminology. For further details on Gelfand triples and stochastic integration theory we refer to \cite{Rockner_SPDE_2015}.

Throughout this paper $(U, (\cdot, \cdot)_U)$ and $(H, (\cdot, \cdot)_H)$ denote real separable Hilbert spaces and $(V, \|\cdot\|_V)$ is a reflexive Banach space embedded continuously and densely in $H$. The dual of $V$ (relative to $H$) is denoted by $V^*$ and the duality pairing between $V$ and $V^*$ by $\langle \cdot, \cdot \rangle$. The probability space $(\Omega, \mathcal{A},\P)$ and filtration $(\F_t)_{t\geq 0}$ will be fixed. The progressive $\sigma$-algebra is denoted by $\mathcal{P}$. Furthermore, suppose that $(W(t))_{t\geq 0}$ is a $U$-cylindrical Brownian motion with respect to $(\F_t)_{t\geq 0}$.

\subsection{Assumptions}

The main assumptions on the nonlinearities are as follows:

\begin{assumptions}\label{main_assumptions}
    Let
    \[A:[0, T] \times \Omega \times V \to V^*, \ \ \text{and} \ \ B: [0, T] \times \Omega \times V \to \calL_2(U, H) \]
    both be $\mathcal{P}\otimes \mathcal{B}(V)$-measurable.  Suppose that there exist finite constants
    \[\alpha > 1, \ \beta \geq 0, \ p \geq \beta + 2, \ \theta > 0, \ K_c, K_A, K_B, K_\alpha \geq 0\]
    and $f \in L^\frac{p}{2}(\Omega; L^1([0, T]))$ such that for all $t\in[0, T]$ a.s.
    \begin{enumerate}[(H1)]
        \item\label{it:hem} (Hemicontinuity) For all  $u, v, w \in V$, $\omega \in \Omega$, the following map is continuous:
              \[\lambda \mapsto \langle A(t, u+\lambda v, \omega), w \rangle.\]
        \item\label{it:weak_mon} (Local weak monotonicity) For all $u, v \in V$,
        \begin{align*}      
            &2\langle A(t, u)-A(t, v), u-v\rangle + \|B(t, u) - B(t, v) \|^2_{\calL_2(U, H)}\\
            &\quad \leq K (1+\|v\|_V^\alpha)(1+\|v\|_H^\beta)\|u-v\|_H^2 .
        \end{align*}
        \item\label{it:coerc} (Coercivity) For all $v \in V$, $v\neq 0$, $$2\langle A(t, v), v \rangle + \|B(t, v)\|^2_{\calL_2(U, H)} +(p-2)\frac{\|B(t, v)^*v\|_U^2}{\|v\|_H^2} \leq -\theta\|v\|_V^\alpha + f(t) + K_c\|v\|_H^2.$$
        \item\label{it:bound1} (Boundedness 1) For all $v\in V$, $$\|A(t, v)\|_{V^*}^{\frac{\alpha}{\alpha-1}}\leq K_A(f(t)+\|v\|_V^{\alpha})(1+\|v\|_H^\beta).$$
        \item\label{it:bound2} (Boundedness 2) For all $v \in V$,
              $$\|B(t, v)\|_{\calL_2(U, H)}^2 \leq f(t)+K_B\|v\|_H^2 + K_\alpha\|v\|_V^\alpha.$$
    \end{enumerate}
\end{assumptions}
Most conditions are standard and appear in previous works that treat the variational approach to SPDEs (see \cite{pardoux_equations_1975,krylov_stochastic_1981,Rockner_SPDE_2015}).
Usually, in these works $\beta = 0$. The case $\beta\geq 0$ is considered in \cite{brzezniak_strong_2014} where L\'{e}vy noise is treated as well. The condition $p\geq \beta+2$ is needed for a priori bounds for involving the $L^{\frac{\alpha}{\alpha-1}}(\Omega\times[0,T])$-norm of $\|A(t,u(t))\|$ for $u\in L^{p}(\Omega;C([0,T];H))\cap L^{\frac{p \alpha}{2}}(\Omega;L^{\alpha}([0,T];V))$ needed in the existence proof. Often it can be avoided by a localization argument.
Our hypothesis \condref{it:coerc} is new and will allow us to obtain estimates for $L^p$-moments. It reduces to the classical coercivity assumption if $p=2$. The function $f$ can be used to include inhomogeneous terms in $A$ and $B$.

After these preparations we can define solutions to \eqref{eq:variationalintro}.
\begin{definition}\label{solution_definition}
    Suppose that Assumptions~\ref{main_assumptions} hold and let $u(0):\Omega\to H$ be $\F_0$-measurable. An adapted, continuous $H$-valued process $u$ is called a {\em solution} to \eqref{eq:variationalintro} if $u\in L^{\alpha}(0,T;V)$ a.s.\ and
    for every $t\in[0, T]$, a.s., $$u(t) = u(0) + \int_0^t A(s, u(s)) \dint s + \int_0^t B(s, u(s)) \dint W(s).$$
\end{definition}
Note that due to \condref{it:bound1}, $t\mapsto A(t, u(t))\in L^{\frac{\alpha}{\alpha-1}}(0,T;V^*)$ a.s.\ and thus the above Bochner integral is well-defined. Due to \condref{it:bound2}, $t\mapsto B(t, u(t))\in L^2(0,T;\calL_2(U,H))$ a.s.\ and thus the stochastic integral is also well-defined.

The following can be checked by elementary arguments involving Young's inequality and inequalities for convex functions:
\begin{remark}\label{rem:additive}
    Let $\phi\in L^{\frac{p\alpha}{2(\alpha-1)}}(\Omega;L^{\frac{\alpha}{\alpha-1}}(0,T;V^*))$ and $\psi\in L^{p}(\Omega;L^2(0,T;\calL_2(U,H)))$
    If $(A,B)$ satisfies Assumptions~\ref{main_assumptions}, then $(A+\phi, B+\psi)$ satisfies Assumptions~\ref{main_assumptions} with the same $\alpha, \beta$, $p$, and $f$ replaced by
    \[\tilde{f} = f + \|\phi\|^{\frac{\alpha}{\alpha-1}}_{V^*} + \|\psi\|_{\calL_2(U,H)}^2.\]
\end{remark}

\subsection{Main result}
The main result of this paper is the following well-posedness result with higher order moments:
\begin{theorem}\label{Main_theorem}
    Suppose that Assumptions~\ref{main_assumptions} hold and let $u(0) \in L^{p}(\Omega,\F_0;H)$. Then \eqref{eq:variationalintro} has a unique solution $u$, and there exists a constant $C$ depending on $\alpha$, $\beta$, $\theta$, $p$, $K_c$, $K_A$, $K_B$, $K_{\alpha}$ such that
    \begin{equation}\label{eq:aprioripnew}
        \begin{split}
            \E\sup\limits_{t\in[0, T]}\|u(t)\|_H^p + \E\Big(\int_0^T \|u(t)\|_V^\alpha \dint t\Big)^{\frac{p}{2}}\leq Ce^{CT}\Big[\E\|u(0)\|_H^p + \E\Big(\int_0^T f(t) \dint t\Big)^{\frac{p}{2}}\Big].
        \end{split}
    \end{equation}
\end{theorem}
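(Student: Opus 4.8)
The plan is to run the classical Galerkin scheme of the variational approach (as in \cite{pardoux_equations_1975,krylov_1979,Rockner_SPDE_2015}), with the new hypothesis \condref{it:coerc} entering only through the a priori bounds. Fix a basis $(e_k)_{k\ge1}$ of $V$, put $V_n=\mathrm{span}(e_1,\dots,e_n)$ with orthogonal projection $P_n$, and let $u_n$ solve the finite-dimensional It\^o SDE obtained by projecting \eqref{eq:variationalintro} onto $V_n$, with $u_n(0)=P_n u(0)$; by \condref{it:hem}, \condref{it:bound1}, \condref{it:bound2} the coefficients are continuous, and together with the a priori estimate below $u_n$ is global. One then proceeds in the usual steps: (i) prove estimates of the form \eqref{eq:aprioripnew} for $u_n$ uniformly in $n$, plus uniform bounds for $\|A(\cdot,u_n)\|$ in the $L^{\alpha/(\alpha-1)}$-scale (from \condref{it:bound1}, where $p\ge\beta+2$ is used) and for $\|B(\cdot,u_n)\|$ in the $L^2$-scale (from \condref{it:bound2}); (ii) pass, along a subsequence, to weak limits $u_n\rightharpoonup u$, $A(\cdot,u_n)\rightharpoonup\bar A$, $B(\cdot,u_n)\rightharpoonup\bar B$ in the appropriate Bochner spaces, so that $u$ solves $\dint u=\bar A\,\dint t+\bar B\,\dint W$; (iii) identify $\bar A=A(\cdot,u)$ and $\bar B=B(\cdot,u)$ by the local-monotonicity (Minty--Browder) argument of \cite{rockner_2010,Rockner_SPDE_2015}, comparing the It\^o formulas for $\|u_n\|_H^2$ and $\|u\|_H^2$ via \condref{it:weak_mon} and \condref{it:hem} after localising by stopping times $\tau_R=\inf\{t:\int_0^t(1+\|u(s)\|_V^\alpha)(1+\|u(s)\|_H^\beta)\,\dint s\ge R\}$; (iv) obtain uniqueness by It\^o on $\|u-v\|_H^2$, \condref{it:weak_mon} and the stochastic Gronwall lemma, using that $t\mapsto(1+\|v(t)\|_V^\alpha)(1+\|v(t)\|_H^\beta)$ is a.s.\ in $L^1(0,T)$; and (v) deduce \eqref{eq:aprioripnew} for $u$ itself from the uniform bound in (i) by weak/weak-$*$ lower semicontinuity of the norms.

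The core is step (i). Since $F(x)=(\|x\|_H^2)^{p/2}$ is $C^2$ on $\R^n$ for $p\ge2$, the ordinary It\^o formula applies to $\|u_n(t)\|_H^p$; writing $S_n(t)=\sup_{s\le t}\|u_n(s)\|_H^p$ and $J_n(t)=\int_0^t\|u_n(s)\|_H^{p-2}\|u_n(s)\|_V^\alpha\,\dint s$, its drift equals $\tfrac p2\|u_n\|_H^{p-2}\big(2\langle A(\cdot,u_n),u_n\rangle+\|B(\cdot,u_n)\|_{\calL_2(U,H)}^2+(p-2)\|B(\cdot,u_n)^*u_n\|_U^2/\|u_n\|_H^2\big)$ --- the last summand being exactly the second-order correction $\tfrac{p(p-2)}2\|u_n\|_H^{p-4}\|B(\cdot,u_n)^*u_n\|_U^2$ --- so \condref{it:coerc} is precisely tailored to yield
\begin{equation*}
\|u_n(t)\|_H^p+\tfrac{p\theta}{2}J_n(t)\le\|u_n(0)\|_H^p+\tfrac p2\int_0^t\|u_n\|_H^{p-2}\big(f+K_c\|u_n\|_H^2\big)\,\dint s+M_n(t),
\end{equation*}
with $M_n(t)=p\int_0^t\|u_n\|_H^{p-2}\langle u_n,B(\cdot,u_n)\,\dint W\rangle_H$. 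Taking expectations here without the supremum (so $\E M_n(t)=0$ after the usual localisation), and estimating $\E\int_0^t\|u_n\|_H^{p-2}f\,\dint s\le(\E S_n(t))^{(p-2)/p}\big(\E(\int_0^t f)^{p/2}\big)^{2/p}\le\varepsilon\,\E S_n(t)+C_\varepsilon\E(\int_0^t f)^{p/2}$ (H\"older in $\Omega$, then Young), one obtains a supremum-free inequality that in particular bounds, for every $\varepsilon>0$,
\begin{equation*}
\E J_n(t)\le\tfrac{2}{p\theta}\E\|u_n(0)\|_H^p+\tfrac\varepsilon\theta\,\E S_n(t)+\tfrac{C_\varepsilon}{\theta}\E\Big(\int_0^t f\Big)^{p/2}+\tfrac{K_c}{\theta}\int_0^t\E S_n(s)\,\dint s.
\end{equation*}

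Now apply the supremum to the displayed It\^o inequality and take expectations. The drift terms are handled as above; for $M_n$, Burkholder--Davis--Gundy gives $\E\sup_{s\le t}|M_n(s)|\le3\,\E\langle M_n\rangle_t^{1/2}$ and, using $\|B(\cdot,v)^*v\|_U\le\|B(\cdot,v)\|_{\calL_2(U,H)}\|v\|_H$ together with \condref{it:bound2},
\begin{equation*}
\langle M_n\rangle_t=p^2\!\int_0^t\!\|u_n\|_H^{2p-4}\|B(\cdot,u_n)^*u_n\|_U^2\,\dint s\le p^2\,S_n(t)\!\int_0^t\!\|u_n\|_H^{p-2}\big(f+K_B\|u_n\|_H^2+K_\alpha\|u_n\|_V^\alpha\big)\dint s.
\end{equation*}
Young's inequality with a parameter $\lambda$ applied to $\langle M_n\rangle_t^{1/2}\le pS_n(t)^{1/2}(\cdots)^{1/2}$ produces a term $\tfrac{p}{2\lambda}K_\alpha\,\E J_n(t)$. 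The key point is that, rather than absorbing it into $\tfrac{p\theta}2\E J_n(t)$ on the left (which would force an unwanted smallness relation between $K_\alpha$ and $\theta$), one substitutes the supremum-free bound for $\E J_n(t)$ from the previous step. Then every occurrence of $\E S_n(t)$ on the right carries a coefficient that is made $<1$ by fixing the free parameters in order --- first $\lambda$ small, then the various $\varepsilon$'s small relative to $\lambda$ --- while all remaining terms are either the data $\E\|u_n(0)\|_H^p$ and $\E(\int_0^T f)^{p/2}$, or of the form $C\!\int_0^t\E S_n(s)\,\dint s$ with $C$ independent of $n$ and $t$. Gronwall's lemma then gives $\E S_n(T)\le Ce^{CT}[\E\|u_n(0)\|_H^p+\E(\int_0^T f)^{p/2}]$. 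The bound on $\E(\int_0^T\|u_n\|_V^\alpha\,\dint s)^{p/2}$ follows by raising the It\^o formula for $\|u_n\|_H^2$ (which obeys the classical coercivity implied by \condref{it:coerc}) to the power $p/2$, treating its martingale by Burkholder--Davis--Gundy and Young as above, and inserting the bound on $\E S_n(T)$ just obtained.

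The hard part is exactly this a priori estimate, and within it the martingale term: for $p=2$ it is absent from the coercivity balance, but for $p>2$ the bound $\|B^*v\|_U\le\|B\|_{\calL_2(U,H)}\|v\|_H$ re-introduces the $V$-norm into $\langle M_n\rangle$ through \condref{it:bound2}, and it must be disposed of with no a priori control on $K_\alpha/\theta$; the device of feeding the supremum-free estimate for $\E J_n$ into the Burkholder--Davis--Gundy term is what makes the (admittedly lengthy) bookkeeping close. In the special case $B(\cdot,v)^*v\equiv0$ the $(p-2)$-term in \condref{it:coerc} is absent and $K_\alpha$ plays no role in $\langle M_n\rangle$, so the same scheme runs for all $p\ge2$ --- and, letting $p\to\infty$, even yields bounds uniform over $\Omega$. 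That the weight $p-2$ in \condref{it:coerc} cannot be lowered is the content of Example~\ref{ex:optimal}.
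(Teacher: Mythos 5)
Your proposal is correct and follows essentially the same route as the paper: the heart of the argument — applying the $p$-th power Itô formula so that \condref{it:coerc} absorbs the second-order correction, first deriving a supremum-free bound on $\E\int_0^t\|u\|_H^{p-2}\|u\|_V^\alpha\,\dint s$ and then feeding it into the Burkholder--Davis--Gundy term so that no smallness relation between $K_\alpha$ and $\theta$ is needed, followed by Gronwall and a separate $p/2$-power argument for the $V$-norm — is exactly the paper's proof of the a priori estimate. The only (immaterial) difference is that you run the estimate on the Galerkin approximants, whereas the paper localises with stopping times and works directly on the solution, delegating the existence/uniqueness machinery you sketch in steps (ii)--(iv) to the standard references.
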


The proof is given in Section~\ref{sec:proofMain}. The main novelty is the a priori estimate \eqref{eq:aprioripnew}. The existence and uniqueness can be obtained by standard Galerkin approximation techniques.
In Corollary~\ref{cor:a_priori_remark} in case $K_B = K_c = 0$, the $p$-dependence in the estimate \eqref{eq:aprioripnew} will be made explicit.

The following example is taken from \cite{veraar_2012} and implies optimality of Theorem~\ref{Main_theorem} with respect to $p$ in the sense that if $p$ is replaced by some number $q>p$, then it can happen that $\E\|u(t)\|_H^{q}=\infty$.
\begin{example}[Optimality]\label{ex:optimal}
    On the torus $\T$ consider the equation
    \begin{equation}\label{veraar_example}
        \dint u(t) = \Delta u(t) \dint t + 2 \gamma (-\Delta)^{\frac{1}{2}} u(t) \dint W(t), \qquad u(0) = u_0.
    \end{equation}
    Here $\gamma \in \mathbb{R}$, $u_0\in L^{p}(\Omega,\F_0;L^2(\T))$ and $W$ is a real-valued Wiener process (thus $U = \R$). In \cite{veraar_2012} it is proved that \eqref{veraar_example} has a unique solution in $L^{p}(\Omega;L^2(0,T; H^1(\T)))$ if $2\gamma^2(p-1) < 1$. Indeed, setting $V = H^1(\T)$, $H = L^2(\T)$, $A = \Delta$, and $B = 2\gamma(-\Delta)^{1/2}$, Assumptions \ref{main_assumptions} \condref{it:hem}, \condref{it:weak_mon}, \condref{it:bound1}, \condref{it:bound2}  hold with $\alpha = 2$, $\beta = 0$ and $f=0$ and suitable constants $K$, $K_A$ and $K_B$. To check \condref{it:coerc} note that
  \begin{align*}
   2\lb \Delta v, v\rb+ \|B(v)\|_{L^2(\T)}^2 +  (p-2) \frac{|B(v)^*v|^2}{\|v\|_{L^2(\T)}^2}
         & \leq 2\lb \Delta v, v\rb+ (p-1)\|B(v)\|_{L^2(\T)}^2
        \\ & \leq -2 \|\nabla v\|_{L^2(\T)}^2 +  4\gamma^2(p-1)\|v\|^2_{H^1(\T)}\\ & \leq  -\theta\|v\|^2_{H^1(\T)} + 2 \|v\|_{L^2(\T)}^2,
    \end{align*}
    where $\theta:=2-4\gamma^2(p-1)>0$. This proves \condref{it:coerc} and thus the well-posedness follows from Theorem~\ref{Main_theorem}.
    On the other hand, it follows from \cite[Theorem~4.1(ii)]{veraar_2012} that there exists an initial datum $u_0\in C^\infty(\T)$ such that if $q>p$ and $\gamma>0$ is such that $2\gamma^2(p-1) < 1$ and $2\gamma^2(q-1) > 1$, then there is a $t>0$ such that $\E\|v(t)\|_{L^2(\T)}^{q} = \infty$. Moreover, even $\E\|v(t)\|_{H^{s}(\T)}^{q} = \infty$ for all $s\in \R$.
\end{example}

\begin{remark}\label{rem:comparison}
    In \cite{neelima_2020} the following coercivity condition was proposed:
    \begin{align}\label{eq:coercivitypmin1}
    2\langle A(t, v), v \rangle + (p-1)\|B(t, v)\|^2_{\calL_2(U, H)}\leq -\theta\|v\|_V^\alpha + f(t) + K_c\|v\|_H^2, \ \ v\in V.
    \end{align}
    The latter is more restrictive than \condref{it:coerc}, since $\frac{\|B(t, v)^*v\|_U^2}{\|v\|_H^2}\leq \|B(t, v)\|^2_{\calL_2(U, H)}$. Replacing our condition \condref{it:coerc} by \eqref{eq:coercivitypmin1}, the main result in \cite{neelima_2020} states that
    \begin{align*}
        \sup_{t\in [0,T]} \E\|u(t)\|^{p} & \leq C \Big[\E\|u(0)\|_H^{p} + \E\Big(\int_0^T f(t) \dint t\Big)^{\frac{p}{2}}\Big],
        \\ \E\sup_{t\in [0,T]}  \|u(t)\|^{r p} &\leq C_r \Big[\E\|u(0)\|_H^{p} + \E\Big(\int_0^T f(t) \dint t\Big)^{\frac{p}{2}}\Big],
    \end{align*}
    where $r\in (0,1)$.
    Both estimates are sub-optimal. The result \eqref{eq:aprioripnew} shows that the supremum can actually be inside the expectation and thus one can take $r=1$. In \cite{neelima_2020} the growth condition \condref{it:bound2} on $B$ is not explicitly assumed, but as far as we can see \condref{it:bound2} is used in their estimate (13).

    Similar results were obtained in \cite{brzezniak_strong_2014}, under a  different coercivity condition. A detailed comparison with \eqref{eq:coercivitypmin1} can be found in \cite[Remark 6.1]{neelima_2020}. 
    \end{remark}

\section{Proof of the main result}\label{sec:proofMain}
In \cite[Theorem~4.2.5, p. 91]{Rockner_SPDE_2015} the following version of It\^o's  formula is obtained for $p=2$. The $p>2$ version can be obtained from the $p=2$ version combined with
the real case by considering $(\|X_t\|^2+\varepsilon)^{p/2}$ and letting $\varepsilon\downarrow 0$ or by applying \cite[Theorem~3.2, p.~73]{rozovskii_1990}.
\begin{lemma}[It\^{o}'s formula for $\|\cdot\|_H^p$]\label{ito_p}
    Let $p\in [2, \infty)$, $\alpha \in (1, \infty)$, $X_0 \in L^p(\Omega; \mathcal{F}_0; H)$ and $Y\in L^{\frac{\alpha}{\alpha-1}}([0, T]\times \Omega; \dint t \otimes \mathbb{P};V^*)$, $Z\in L^2([0, T]\times \Omega; \dint t \otimes \mathbb{P}; \calL_2(U, H))$ both progressively measurable. If $X \in L^\alpha([0, T]\times \Omega; \dint t \otimes \mathbb{P}; V)$ and for a.e.\ $t\in[0, T]$ $\E(\|X_t\|_H^2) < \infty$, and a.s.
    $$X_t = X_0 + \int_0^t Y_s \dint s + \int_0^t Z_s \dint W_s, \quad t\in[0, T]$$
    is satisfied in $V^*$, then X is a continuous $H$-valued $\mathcal{F}_t$-adapted process and the following holds a.s.:
    \begin{align*}
        \|X_t\|_H^p & = \|X_0\|_H^p + p\int_0^t \|X_s\|_H^{p-2} Z_s^* X_s \dint W_s                                                                                 \\
                    & \quad + \frac{p(p-2)}{2}\int_0^t \|X_s\|_H^{p-4} \|Z_s^* X_s\|_U^2 \dint s                                                                    \\
                    & \quad + \frac{p}{2}\int_0^t \|X_s\|_H^{p-2} \left(2\langle Y_s, X_s\rangle + \|Z_s\|_{\calL_2(U, H)}^2 \right)\dint s, \quad t\in[0, T],
    \end{align*}
    where $\|X_s\|_H^{p-4}$ is defined as zero if $X_s=0$.
\end{lemma}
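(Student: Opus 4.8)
The plan is to reduce to the known $p=2$ case, \cite[Theorem~4.2.5]{Rockner_SPDE_2015}, and obtain general $p$ by composing the resulting real semimartingale with a regularised power function and letting the regularisation vanish. First, since the hypotheses here are exactly those of \cite[Theorem~4.2.5]{Rockner_SPDE_2015} (note $L^p(\Omega;\mathcal{F}_0;H)\subset L^2(\Omega;\mathcal{F}_0;H)$), that result already gives that $X$ has a continuous $H$-valued $\mathcal{F}_t$-adapted modification and that, a.s.\ for all $t\in[0,T]$,
\[\|X_t\|_H^2 = \|X_0\|_H^2 + \int_0^t \big(2\langle Y_s, X_s\rangle + \|Z_s\|_{\calL_2(U,H)}^2\big)\dint s + 2\int_0^t Z_s^* X_s \dint W_s.\]
Thus $R_t := \|X_t\|_H^2$ is a continuous real semimartingale whose drift $b_s := 2\langle Y_s, X_s\rangle + \|Z_s\|_{\calL_2(U,H)}^2$ lies in $L^1(0,T)$ a.s.\ (by Hölder's inequality with exponents $\tfrac{\alpha}{\alpha-1}$ and $\alpha$, using $Y\in L^{\alpha/(\alpha-1)}$, $X\in L^\alpha$, and $Z\in L^2$) and whose quadratic variation is $\dint[R]_t=4\|Z_t^* X_t\|_U^2\dint t$.

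Next, for $\varepsilon>0$ set $\phi_\varepsilon(x):=(x+\varepsilon)^{p/2}$, which is $C^2$ on $[0,\infty)$ with $\phi_\varepsilon'(x)=\tfrac p2(x+\varepsilon)^{p/2-1}$ and $\phi_\varepsilon''(x)=\tfrac{p(p-2)}4(x+\varepsilon)^{p/2-2}$. Applying the classical one-dimensional Itô formula to $\phi_\varepsilon(R_t)$ and inserting the data from the previous step, a short computation gives, a.s.\ for all $t\in[0,T]$,
\begin{align*}
(\|X_t\|_H^2+\varepsilon)^{p/2} &= (\|X_0\|_H^2+\varepsilon)^{p/2} + p\int_0^t (\|X_s\|_H^2+\varepsilon)^{\frac p2-1}\, Z_s^* X_s \dint W_s\\
&\quad + \frac{p(p-2)}{2}\int_0^t (\|X_s\|_H^2+\varepsilon)^{\frac p2-2}\,\|Z_s^* X_s\|_U^2 \dint s\\
&\quad + \frac{p}{2}\int_0^t (\|X_s\|_H^2+\varepsilon)^{\frac p2-1}\,\big(2\langle Y_s, X_s\rangle + \|Z_s\|_{\calL_2(U,H)}^2\big) \dint s.
\end{align*}
If one prefers genuine martingales and integrable integrands at this stage, one first localises by $\tau_n:=\inf\{t\in[0,T]:\|X_t\|_H\ge n\}\wedge T$, establishes the identity on $[0,\tau_n]$, and removes the localisation at the end using the continuity of $X$ in $H$.

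It then remains to let $\varepsilon\downarrow0$ pathwise in every term. For the two Lebesgue integrals, dominated convergence on $[0,t]$ applies: continuity of $s\mapsto\|X_s\|_H$ bounds the factor $(\|X_s\|_H^2+\varepsilon)^{p/2-1}$ by $(\|X_s\|_H^2+1)^{p/2-1}$ for $\varepsilon\le1$, while $b_s\in L^1(0,t)$ and $\|Z_s\|_{\calL_2(U,H)}^2\in L^1(0,t)$. The one delicate point — and the only place the negative exponent $p/2-2$ can occur — is the term with $\|Z_s^* X_s\|_U^2$: there the Cauchy--Schwarz bound $\|Z_s^* X_s\|_U^2\le\|Z_s\|_{\calL_2(U,H)}^2\|X_s\|_H^2\le\|Z_s\|_{\calL_2(U,H)}^2(\|X_s\|_H^2+\varepsilon)$ absorbs one factor and dominates the integrand by $(\|X_s\|_H^2+1)^{p/2-1}\|Z_s\|_{\calL_2(U,H)}^2\in L^1(0,t)$, while pointwise it tends to $\|X_s\|_H^{p-4}\|Z_s^* X_s\|_U^2$, both sides being $0$ when $X_s=0$ (as then $Z_s^* X_s=0$), consistently with the stated convention. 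The same bound controls the stochastic integrand by $(\|X_s\|_H^2+1)^{(p-1)/2}\|Z_s\|_{\calL_2(U,H)}\in L^2(0,t)$ a.s., hence $\int_0^t\big\|(\|X_s\|_H^2+\varepsilon)^{p/2-1}Z_s^* X_s-\|X_s\|_H^{p-2}Z_s^* X_s\big\|_U^2\dint s\to0$ a.s., and by the stochastic dominated convergence theorem the stochastic integrals converge in probability, uniformly on $[0,t]$. Passing to the limit in each term yields the claim; alternatively the whole statement can be quoted from \cite[Theorem~3.2, p.~73]{rozovskii_1990}. I expect the main obstacle to be precisely this $\varepsilon\downarrow0$ passage near the zero set of $X$ when $p<4$; it is exactly the Cauchy--Schwarz bound $\|Z_s^* X_s\|_U\le\|Z_s\|_{\calL_2(U,H)}\|X_s\|_H$, trading one power of $\|X_s\|_H^2+\varepsilon$ for the harmless exponent $p/2-1\ge0$, that resolves it.
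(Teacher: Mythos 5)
Your proposal is correct and follows exactly the route the paper itself indicates (which it only sketches): reduce to the $p=2$ case of \cite[Theorem~4.2.5]{Rockner_SPDE_2015}, apply the one-dimensional It\^o formula to $(\|X_t\|_H^2+\varepsilon)^{p/2}$, and let $\varepsilon\downarrow 0$. Your treatment of the only delicate point --- absorbing one power of $\|X_s\|_H^2+\varepsilon$ via $\|Z_s^*X_s\|_U\le\|Z_s\|_{\calL_2(U,H)}\|X_s\|_H$ so that the exponent $p/2-2$ never acts on a vanishing base when $p<4$ --- is exactly what is needed and is consistent with the stated convention at $X_s=0$.
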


The main step in the proof of Theorem~\ref{Main_theorem} is the following new a priori estimate, where we note that the condition $p\geq \beta+2$ in Assumptions~\ref{main_assumptions} is not needed.
\begin{theorem}\label{a_priori_theorem}
    Suppose $u$ is a solution of equation \eqref{eq:variationalintro} with initial condition $u(0)\in L^{p}(\Omega; H)$ and \condref{it:coerc}, \condref{it:bound1} and \condref{it:bound2} from Assumptions~\ref{main_assumptions} hold with $f\in L^\frac{p}{2}(\Omega; L^1([0, T]))$. Then, there exists a constant $C$ depending on $\alpha$, $\beta$, $\theta$, $p$, $K_c$, $K_A$, $K_B$, $K_{\alpha}$ such that
    \begin{equation}\label{eq:estapriorimain}
        \begin{split}
            \E\sup\limits_{t\in[0, T]}\|u(t)\|_H^p + \E\Big(\int_0^T \|u(t)\|_V^\alpha \dint t\Big)^{\frac{p}{2}}
            &\leq Ce^{CT}\Big[\E\|u(0)\|_H^p + \E\Big(\int_0^T f(t) \dint t\Big)^{\frac{p}{2}}\Big].
        \end{split}
    \end{equation}
\end{theorem}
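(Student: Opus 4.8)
The plan is to apply the It\^o formula of Lemma~\ref{ito_p} to the solution $u$ with $Y_s = A(s,u(s))$, $Z_s = B(s,u(s))$, and then to absorb terms using the new coercivity hypothesis \condref{it:coerc}. The crucial algebraic observation is that the two deterministic ``quadratic variation'' contributions in It\^o's formula combine precisely into the left-hand side of \condref{it:coerc}: writing $\|u_s\|_H^{p-2}$ in front, the sum of $\frac{p(p-2)}{2}\|u_s\|_H^{p-4}\|Z_s^* u_s\|_U^2$ and $\frac{p}{2}\|u_s\|_H^{p-2}(2\langle Y_s,u_s\rangle + \|Z_s\|^2_{\calL_2(U,H)})$ equals $\frac p2 \|u_s\|_H^{p-2}$ times exactly $2\langle A(s,u_s),u_s\rangle + \|B(s,u_s)\|^2_{\calL_2(U,H)} + (p-2)\|B(s,u_s)^* u_s\|_U^2/\|u_s\|_H^2$. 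By \condref{it:coerc} this is bounded by $\frac p2 \|u_s\|_H^{p-2}(-\theta\|u_s\|_V^\alpha + f(s) + K_c\|u_s\|_H^2)$. After splitting $\|u_s\|_H^{p-2} f(s)$ with Young's inequality (exponents $\frac{p}{p-2}$ and $\frac p2$), one arrives, before taking suprema or expectations, at the pathwise inequality
\begin{equation*}
    \|u_t\|_H^p + \tfrac{p\theta}{2}\int_0^t \|u_s\|_H^{p-2}\|u_s\|_V^\alpha \dint s \leq \|u(0)\|_H^p + C\int_0^t \|u_s\|_H^p \dint s + C\Big(\int_0^T f(s)\dint s\Big)^{p/2} + p\int_0^t \|u_s\|_H^{p-2} Z_s^* u_s \dint W_s.
\end{equation*}

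Next I would take expectations, first of $\sup_{t\le T}$ of the above, handling the stochastic integral via the Burkholder--Davis--Gundy inequality: its quadratic variation is $\int_0^t \|u_s\|_H^{2(p-2)}\|Z_s^* u_s\|_U^2 \dint s \le \int_0^t \|u_s\|_H^{2(p-1)}\|B(s,u_s)\|^2_{\calL_2(U,H)}\dint s$, and after applying \condref{it:bound2} and Young's inequality one extracts a factor $\varepsilon\, \E\sup_{t\le T}\|u_t\|_H^p$ (absorbed to the left) plus lower-order terms of the form $C\,\E\int_0^T \|u_s\|_H^{p-2}(f(s)+\|u_s\|_H^2 + \|u_s\|_V^\alpha)\dint s$. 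The term with $\|u_s\|_H^{p-2}\|u_s\|_V^\alpha$ must be handled carefully: split it by Young's inequality so that a small multiple of $\int_0^T \|u_s\|_H^{p-2}\|u_s\|_V^\alpha \dint s$ can be absorbed into the good coercivity term already on the left, leaving a remainder controlled by $\E\int_0^T \|u_s\|_H^p \dint s$. The $f(s)\|u_s\|_H^{p-2}$ piece is again split by Young into $(\int_0^T f)^{p/2}$ and $\int_0^T \|u_s\|_H^p$. One then obtains
\begin{equation*}
    \E\sup_{t\le T}\|u_t\|_H^p + \E\int_0^T \|u_s\|_H^{p-2}\|u_s\|_V^\alpha \dint s \le C\,\E\|u(0)\|_H^p + C\,\E\Big(\int_0^T f\Big)^{p/2} + C\int_0^T \E\sup_{r\le s}\|u_r\|_H^p\, \dint s,
\end{equation*}
and Gronwall's lemma yields the bound $Ce^{CT}[\cdots]$ for the first term on the left.

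Finally, to recover the $\E(\int_0^T \|u_s\|_V^\alpha \dint s)^{p/2}$ term, I would return to the pathwise inequality, drop the good term and use BDG to estimate $\E\sup$ of the stochastic integral again, now keeping the full $\frac{p\theta}{2}\int_0^t \|u_s\|_H^{p-2}\|u_s\|_V^\alpha\dint s$ on the left; then split $(\int_0^T \|u_s\|_V^\alpha)^{p/2} = (\int_0^T \|u_s\|_V^\alpha)^{p/2}$ using H\"older in $s$ if $p>2$, namely $(\int_0^T \|u_s\|_V^\alpha \dint s)^{p/2} \le \big(\sup_{s\le T}\|u_s\|_H^{(p-2)/?}\big)\cdots$ — more precisely bound $\int_0^T \|u_s\|_V^\alpha \dint s = \int_0^T \|u_s\|_H^{-(p-2)}\cdot\|u_s\|_H^{p-2}\|u_s\|_V^\alpha \dint s$ is not directly useful, so instead use $(\int_0^T \|u_s\|_V^\alpha \dint s)^{p/2}\le (\sup_{s\le T}\|u_s\|_H)^{?}$ together with the already-controlled quantity $\int_0^T \|u_s\|_H^{p-2}\|u_s\|_V^\alpha\dint s$ via H\"older with exponents $\frac{p}{p-2}$ and $\frac{p}{2}$: indeed $\int_0^T \|u_s\|_V^\alpha \dint s \le \big(\int_0^T \|u_s\|_H^{-\frac{2(p-2)}{p-2}}\big)^{?}$ — cleaner is to write, for the bound on the square, $\E\big(\int_0^T \|u_s\|_V^\alpha\big)^{p/2}$ and apply BDG/Young exactly as above but this time absorbing $\varepsilon\big(\E\sup\|u_t\|_H^p + \E(\int_0^T \|u_s\|_V^\alpha)^{p/2}\big)$, using \condref{it:bound2} to dominate the martingale part and the already-established bound on $\E\sup\|u_t\|_H^p$ from the previous paragraph.

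\textbf{Main obstacle.} The delicate point is the chain of Young/H\"older splittings that simultaneously (i) absorb $\varepsilon\,\E\sup_{t\le T}\|u_t\|_H^p$ and $\varepsilon\,\E(\int_0^T\|u_s\|_V^\alpha\dint s)^{p/2}$ back to the left, (ii) keep enough of the coercive term $\int_0^T \|u_s\|_H^{p-2}\|u_s\|_V^\alpha\dint s$ to later upgrade to the pure $\|u\|_{L^\alpha(0,T;V)}^p$ estimate, and (iii) leave only a $\int_0^T\E\sup_{r\le s}\|u_r\|_H^p\dint s$ remainder for Gronwall — all while tracking that the resulting constant depends only on $\alpha,\beta,\theta,p,K_c,K_A,K_B,K_\alpha$ and not on $T$ except through the stated $e^{CT}$ factor. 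Handling the case $X_s=0$ (where $\|X_s\|_H^{p-4}$ is set to zero and the quotient in \condref{it:coerc} is interpreted by continuity) requires a routine but careful justification, e.g.\ by first working with $(\|u_s\|_H^2+\varepsilon)^{p/2}$ as in the proof of Lemma~\ref{ito_p} and passing to the limit.
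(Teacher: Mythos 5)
Your overall architecture (It\^o's formula for $\|\cdot\|_H^p$, recombining the two quadratic-variation contributions into exactly the left-hand side of \condref{it:coerc}, Burkholder--Davis--Gundy plus Young for the martingale, Gronwall) matches the paper's, but the key absorption step in your supremum estimate does not go through as described. After BDG, H\"older, Young and \condref{it:bound2}, the martingale contributes a term $\tfrac{C K_\alpha}{\varepsilon}\,\E\int_0^T\|u_s\|_H^{p-2}\|u_s\|_V^\alpha\,\dint s$, where $\varepsilon$ is the \emph{small} parameter used to absorb $\varepsilon\,\E\sup_t\|u_t\|_H^p$; its prefactor is therefore necessarily \emph{large}. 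You propose to "split it by Young's inequality so that a small multiple can be absorbed into the good coercivity term," but no Young splitting converts $Cx$ into $\delta x+C_\delta y$ with $y$ controlled by $\|u\|_H$ alone: $\|u\|_V^\alpha$ is not dominated by any function of $\|u\|_H$. Direct absorption into $\tfrac{p\theta}{2}\int\|u\|_H^{p-2}\|u\|_V^\alpha$ would require simultaneously $K_\alpha/\varepsilon\lesssim\theta$ and $p\varepsilon\lesssim 1$, i.e.\ a smallness condition of the type $K_\alpha\lesssim\theta/p$ that is not assumed. The missing idea is the paper's preliminary step: take \emph{plain} expectations of the It\^o inequality first (the local martingale vanishes, so no BDG and no $1/\varepsilon$ appear), which yields $\tfrac{\theta p}{2}\E\int\|u\|_H^{p-2}\|u\|_V^\alpha\,\dint s\le\E\|u(0)\|_H^p+\tfrac{p}{2}\E\int\|u\|_H^{p-2}f\,\dint s+\tfrac{pK_c}{2}\E\int\|u\|_H^p\,\dint s$, and then substitute this bound (whose large prefactor is now harmless) into the supremum estimate before Gronwall.

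There is a second gap in your treatment of $\E\big(\int_0^T\|u_s\|_V^\alpha\,\dint s\big)^{p/2}$: returning to the $p$-power pathwise inequality cannot produce this quantity, because its coercive term $\int\|u\|_H^{p-2}\|u\|_V^\alpha\,\dint s$ does not control $\big(\int\|u\|_V^\alpha\,\dint s\big)^{p/2}$ — the weight $\|u_s\|_H^{p-2}$ may be small precisely where $\|u_s\|_V$ is large — and the H\"older manipulations you sketch run into exactly this obstruction. One must instead go back to the It\^o formula for $\|\cdot\|_H^2$, isolate $\theta\int_0^{\tau_n}\|u_s\|_V^\alpha\,\dint s$, raise the resulting inequality to the power $p/2$, and only then apply BDG to $\E\big|\int_0^{\tau_n}B(s,u_s)^*u_s\,\dint W_s\big|^{p/2}$ together with \condref{it:bound2}, absorbing $\varepsilon\,\E\big(\int\|u\|_V^\alpha\big)^{p/2}$ and invoking the already-established supremum bound. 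Two smaller points: the pointwise split $\|u_s\|_H^{p-2}f(s)\le c\|u_s\|_H^p+cf(s)^{p/2}$ is not available since $f$ is only in $L^{p/2}(\Omega;L^1([0,T]))$, so the $f$-term must be handled by pulling out $\sup_s\|u_s\|_H^{p-2}$ and applying Young after taking $\E\sup$; and the stopping times needed to legitimize taking expectations of the stochastic integral should be made explicit.
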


\begin{proof}
    \textit{Step 0: Stopping time argument}.

    For $n\geq 1$ consider the following sequence of stopping times:
    \begin{equation*}
        \tau_n = \inf\{t\in[0, T] : \|u(t)\|_H \geq n\} \wedge \inf\{t\in[0, T]: \int_0^t \|u(s)\|_V^\alpha ds \geq n \},
    \end{equation*}
    where we set $\inf \emptyset = T$. Then $\tau_n \to T$ a.s. as $n\to \infty$ by Definition~\ref{solution_definition}. Since $u$ solves \eqref{eq:variationalintro} in the sense of Definition~\ref{solution_definition}, Lemma~\ref{ito_p} implies the following:
    \begin{align*}
         & \|u(t\wedge \tau_n)\|_H^{p}  = \|u(0)\|_H^{p}  + p\int_0^{t\wedge\tau_n} \|u(s)\|_H^{p-2}B(s, u(s))^* u(s) \dint W(s)                                                                          \\
         & + \frac{p}{2}\int_0^{t\wedge\tau_n} \|u(s)\|_H^{p-2} \Big(2\langle A(s, u(s)), u(s)\rangle +  \|B(s, u(s))\|_{\calL_2(U, H)}^2\\
         &\quad + (p-2)\frac{\|B(s, u(s))^*u(s)\|_U^2}{\|u(s)\|_H^2}\Big) \dint s.
    \end{align*}
    Using the coercivity assumption \condref{it:coerc}, the latter implies
    \begin{equation}\label{ItoIneq}
        \begin{split}
            \|u(t\wedge \tau_n)\|_H^{p} &+\frac{\theta p}{2}\int_0^{t\wedge\tau_n}\|u(s)\|_H^{p-2} \|u(s)\|_V^\alpha \dint s \\
            & \leq \|u(0)\|_H^{p} + p\int_0^{t\wedge\tau_n} \|u(s)\|_H^{p-2} B(s, u(s))^*u(s) \dint W(s) \\
            & \phantom{\leq} + \frac{p}{2}\int_0^{t\wedge\tau_n} \|u(s)\|_H^{p-2} \left(f(s)+K_c\|u(s)\|_H^2 \right) \dint s.\\
        \end{split}
    \end{equation}
    Taking expectations in \eqref{ItoIneq}, the stochastic integral cancels and we find
    \begin{equation}\label{apriori}
        \begin{split}
            \E\|u(t\wedge\tau_n)&\|_H^p +\frac{\theta p}{2}\E\int_0^{t\wedge\tau_n}\|u(s)\|_H^{p-2}\|u(s)\|_V^\alpha \dint s\\
            & \leq \E\|u(0)\|_H^p +\frac{p}{2}\E\int_0^{t\wedge\tau_n}\|u(s)\|_H^{p-2}f(s) \dint s
            +\frac{p}{2}K_c\E\int_0^{t\wedge\tau_n}\|u(s)\|_H^p \dint s.
        \end{split}
    \end{equation}
    Estimates~\eqref{ItoIneq} and \eqref{apriori} will be used several times to derive new estimates which ultimately lead to \eqref{eq:estapriorimain}.

    \smallskip
    \textit{Step 1: Estimating the supremum term}  $\E\sup\limits_{t\in[0, T]}\|u(s)\|_H^p$.

    Taking suprema and expectations in \eqref{ItoIneq}, we obtain the following estimate
    \begin{equation}\label{SPDEest1}
        \begin{split}
            \E\sup\limits_{r\in[0, t]}\|u(r\wedge\tau_n)\|_H^p &\leq \E\|u(0)\|_H^p + p\E\sup\limits_{r\in[0, t]}\int_0^{r\wedge\tau_n} \|u(s)\|_H^{p-2} B(s, u(s))^* u(s)\dint W(s)\\
            &\quad + \frac{p}{2}\E\int_0^{t\wedge\tau_n}\|u(s)\|_H^{p-2} f(s) \dint s+\frac{pK_c}{2}\E\int_0^{t\wedge\tau_n}\|u(s)\|_H^{p}\dint s.
        \end{split}
    \end{equation}
    Let $\varepsilon_1 > 0$. Then
    \begin{align*}
         & \E\sup\limits_{r\in[0, t]}\int_0^{r\wedge\tau_n}
        \|u(s)\|_H^{p-2}B(s, u(s))^*u(s)\dint W(s)                                                                                                                                                                                                  \\
         & \stackrel{\mathrm{(i)}}{\leq}2\sqrt{2}\E\Big(\int_0^{t\wedge\tau_n} \|u(s)\|_H^{2p-2}\|B(s, u(s))\|_{\calL_2(U, H)}^2 \dint s\Big)^{\frac{1}{2}}                                                                                        \\
         & \stackrel{\mathrm{(ii)}}{\leq} 2\sqrt{2}\Big(\E\sup\limits_{r\in[0, t]}\|u(r\wedge\tau_n)\|_H^p\Big)^{\frac{1}{2}}\Big(\E\int_0^{t\wedge \tau_n}\|u(s)\|_H^{p-2}\|B(s, u(s))\|_{\calL_2(U, H)}^2\dint s \Big)^{\frac{1}{2}}            \\
         & \stackrel{\mathrm{(iii)}}{\leq} \sqrt{2}\varepsilon_1\E\sup\limits_{r\in[0, t]}\|u(r\wedge\tau_n)\|_H^p + \frac{\sqrt{2}}{\varepsilon_1}\E\int_0^{t\wedge \tau_n}\|u(s)\|_H^{p-2}\|B(s, u(s))\|_{\calL_2(U, H)}^2 \dint s              \\
         & \stackrel{\mathrm{(iv)}}{\leq}\sqrt{2}\varepsilon_1\E\sup\limits_{r\in[0, t]}\|u(r\wedge\tau_n)\|_H^p\\
         &\quad  +\frac{\sqrt{2}}{\varepsilon_1}\E\int_0^{t\wedge \tau_n} \|u(s)\|_H^{p-2}(f(s)+K_B\|u(s)\|_H^2+K_\alpha\|u(s)\|_V^\alpha)\dint s,
    \end{align*}
    where in (i) we have applied the Burkholder-Davis-Gundy inequality with constant $2\sqrt{2}$ (see \cite[Theorem~1]{Ren08}), in (ii) H\"older's inequality, in (iii) Young's inequality and in (iv) hypothesis \condref{it:bound2}. Using the latter estimate in \eqref{SPDEest1}, we find
    \begin{align}\label{inter_1}
         & (1-p\sqrt{2}\varepsilon_1)\E\sup\limits_{r\in[0, t]}\|u(r\wedge\tau_n)\|_H^p \nonumber                                          \\
         & \leq \E\|u(0)\|_H^p + p\Big(\tfrac{\sqrt{2}}{\varepsilon_1}+\tfrac{1}{2}\Big)\E\int_0^{t\wedge \tau_n}\|u(s)\|_H^{p-2}f(s) \dint s \\ \nonumber & \quad +p\big(\tfrac{\sqrt{2}K_B}{\varepsilon_1}+\tfrac{K_c}{2}\big)\E\int_0^{t\wedge \tau_n}\|u(s)\|_H^p \dint s + \tfrac{pK_\alpha\sqrt{2}}{\varepsilon_1}\E\int_0^{t\wedge \tau_n}\|u(s)\|_H^{p-2}\|u(s)\|_V^\alpha \dint s.
    \end{align}
    Using estimate~\eqref{apriori} for the last term of \eqref{inter_1} leads to
    \begin{align}\nonumber
         & (1-p\sqrt{2}\varepsilon_1)\E\sup\limits_{r\in[0, t]}\|u(r\wedge\tau_n)\|_H^p                                                                                                                                                                            \\
         & \leq \big(1+K_\alpha\tfrac{2\sqrt{2}}{\varepsilon_1\theta}\big)\E\|u(0)\|_H^p + p\big(\tfrac{\sqrt{2}}{\varepsilon_1}+\tfrac{1}{2}+K_\alpha\tfrac{\sqrt{2}}{\varepsilon_1\theta}\big)\E\int_0^{t\wedge \tau_n}\|u(s)\|_H^{p-2}f(s) \dint s \label{inter_2} \\
         & \quad+ p\big(\tfrac{\sqrt{2}K_B}{\varepsilon_1}+\tfrac{K_c}{2}+\tfrac{\sqrt{2}K_\alpha K_c}{\varepsilon_1\theta}\big)\E\int_0^{t\wedge \tau_n}\|u(s)\|_H^p \dint s.\nonumber
    \end{align}
    It remains to absorb the integrals of $u$ on the right-hand side of \eqref{inter_2}.
    To this end, let $\varepsilon_2 > 0$. By H\"{o}lder's inequality and Young's inequality we obtain
    \begin{equation}\label{f_s_estimate}
        \begin{split}
            \E\int_0^{t\wedge \tau_n} \|u(s)\|_H^{p-2}f(s) \dint s &\leq \E\sup\limits_{r\in[0, t]}\|u(r\wedge\tau_n)\|_H^{p-2}\int_0^{t}f(s) \dint s\\
            &\leq\Big(\varepsilon_2\E\sup\limits_{r\in[0,t]}\|u(r)\|_H^p\Big)^{\frac{p-2}{p}} \Big(\varepsilon_2^{\frac{2-p}{2}}\E\Big(\int_0^{t} f(s) \dint s\Big)^{\frac{p}{2}}\Big)^{\frac{2}{p}}\\
            &\leq\tfrac{p-2}{p}\varepsilon_2\E\sup\limits_{r\in[0,t]}\|u(r\wedge\tau_n)\|_H^p+\tfrac{2}{p} \varepsilon_2^{\frac{2-p}{2}}\E\Big(\int_0^{t}f(s) \dint s\Big)^{\frac{p}{2}}.\\
        \end{split}
    \end{equation}
    Setting
    $\phi(\varepsilon_1, \varepsilon_2) = p\sqrt{2}\varepsilon_1  +(p-2)\varepsilon_2\big(\tfrac{\sqrt{2}}{\varepsilon_1}+\tfrac{1}{2}+K_\alpha\tfrac{\sqrt{2}}{\varepsilon_1\theta}\big)$
    and using \eqref{f_s_estimate} in \eqref{inter_2}
    we obtain:
    \begin{equation}\label{final_inter_estimate}
        \begin{split}
            (1-\phi(\varepsilon_1,\varepsilon_2))\E\sup\limits_{r\in[0, t]}\|u(r\wedge\tau_n)&\|_H^p  \leq \big(1+K_\alpha\tfrac{2\sqrt{2}}{\varepsilon_1\theta}\big)\E\|u(0)\|_H^p                                                                                                                                       \\
            &  +  2 \varepsilon_2^{\frac{2-p}{p}}\big(\tfrac{\sqrt{2}}{\varepsilon_1}+\tfrac{1}{2}+K_\alpha\tfrac{\sqrt{2}}{\varepsilon_1\theta}\big)\E\Big(\int_0^{t}f(s) \dint s\Big)^{\frac{p}{2}} \\
            &  + p\big(\tfrac{\sqrt{2}K_B}{\varepsilon_1}+\tfrac{K_c}{2}+\tfrac{\sqrt{2}K_\alpha K_c}{\varepsilon_1\theta}\big)\E\int_0^{t\wedge \tau_n}\|u(s)\|_H^p \dint s
        \end{split}
    \end{equation}
    First choosing $\varepsilon_1$ small enough, and then $\varepsilon_2$ such that $\phi(\varepsilon_1,\varepsilon_2)=\frac12$, it follows that there is a constant $C>0$ (only depending on $\alpha$, $\beta$, $\theta$, $p$, $K_c$, $K_A$, $K_B$, $K_{\alpha}$) such that
    \begin{equation}\label{intermediateresult}
        \E\sup\limits_{r\in[0, t]}\|u(r\wedge\tau_n)\|_H^p \leq C\Big(\E\|u(0)\|_H^p +\E\Big(\int_0^{t}f(s) \dint s\Big)^{\frac{p}{2}}+\E\int_0^{t}\one_{[0,\tau_n]}(s)\|u(s)\|_H^p\Big).
    \end{equation}
    Applying Gronwall's inequality to $v(t):= \sup_{r\in[0, t]}\|u(r\wedge \tau_n)\|_H^p$ we find

    \begin{equation*}
        \E\sup\limits_{t\in[0, T]}\|u(t\wedge\tau_n)\|_H^p \leq Ce^{CT}\Big(\E\|u(0)\|_H^p + \E\Big(\int_0^{T} f(s) \dint s\Big)^{\frac{p}{2}}\Big)
    \end{equation*}
    By Fatou's lemma this leads to
    \begin{equation}\label{sup_estimate}
        \E\sup\limits_{t\in[0, T]}\|u(t)\|_H^p \leq Ce^{CT}\Big(\E\|u(0)\|_H^p + \E\biggl(\int_0^{T}f(s) \dint s\biggr)^{\frac{p}{2}}\Big)
    \end{equation}
    and completes the proof of the supremum estimate.

    \smallskip

    \textit{Step 2: Estimating the $V$-norm} $\E\Big(\int_0^T \|u(s)\|_V^\alpha \dint s\Big)^{\frac{p}{2}}$.
    
    In order to estimate this quantity, by Lemma~\ref{ito_p} we find
    \begin{equation*}
	\begin{split}
        	\|u(t)\|_H^2 = \|u(0)\|_H^2 &+ \int_0^t \Big( 2\langle A(s, u(s)), u(s)\rangle + \|B(s, u(s))\|_{\calL_2(U, H)}^2 \Big) \dint s\\
						& \quad + 2\int_0^t B(s, u(s))^*u(s) \dint W(s)
	\end{split}
    \end{equation*}
    By the coercivity condition \condref{it:coerc} we find that
    \begin{align*}
        \|u(t)\|_H^2 & + \int_0^t \Big( (p-2)\frac{\|B(s, u(s))^*u(s)\|_U^2}{\|u(s)\|_H^2} + \theta\|u(s)\|_V^\alpha \Big) \dint s    \\
                    & \leq \|u(0)\|_H^2 +  \int_0^t \big(f(s) + K_c \|u(s)\|_H^2\big) \dint s + 2\int_0^t B(s, u(s))^* u(s) \dint W(s)
    \end{align*}
    Selecting just the term $\theta \|u(s)\|_V^\alpha$ and evaluating at $t= \tau_n$ gives
    \begin{equation}\label{eq:Vtermstep}
        \theta\int_0^{\tau_n} \|u(s)\|_V^\alpha \dint s \leq \|u(0)\|_H^2 + \int_0^t \big(f(s) + K_c \|u(s)\|_H^2\big) \dint s + 2\int_0^{\tau_n} B(s, u(s))^* u(s)\dint W(s)
    \end{equation}
    Applying the function $|\cdot |^{\frac{p}{2}}$ to both sides of \eqref{eq:Vtermstep} and taking expectations, we obtain
    \begin{equation}\label{V_norm}
        \begin{split}
            &\frac{\theta^{\frac{p}{2}}}{a_p}\E\Big(\int_0^{\tau_n}\|u(s)\|_V^\alpha \dint s\Big)^{\frac{p}{2}} \leq \E\|u(0)\|_H^p +  \fterm
            \\ & \qquad
            + K_c^{\frac p 2}  \E\Big(\int_0^{T} \|u(s)\|_H^2 \dint s\Big)^{\frac{p}{2}}
            + 2^{\frac{p}{2}} \E\left|\int_0^{\tau_n} B(s, u(s))^* u(s)\dint W(s)\right|^{\frac{p}{2}},
        \end{split}
    \end{equation}
    where $a_p = 2^{p-2}$.
    The $\|u(s)\|_H^2$-terms can be estimated with help of \eqref{sup_estimate} by
    \begin{equation}\label{est_ush_term}
        \begin{split}
            \E\Big(\int_0^{T} \|u(s)\|_H^2 \dint s\Big)^{\frac{p}{2}} &\le T^{\frac p 2} \E\sup\limits_{t\in[0, T]}\|u(t)\|_H^p\\
            &\leq C T^{\frac p 2} e^{CT}\Big(\E\|u(0)\|_H^p + \E\biggl(\int_0^{T}f(s) \dint s\biggr)^{\frac{p}{2}}\Big).
        \end{split}
    \end{equation}
    Thus it remains to estimate the $B$-term. We obtain:
    \begin{equation}\label{BDG_V_norm}
        \begin{split}
            \E\Big|\int_0^{\tau_n} &B(s, u(s))^* u(s)\dint W(s)\Big|^{\frac{p}{2}}
            \stackrel{\mathrm{(i)}}{\leq} C_p\E\Big(\int_0^T \|u(s)\|_H^2 \one_{[0,\tau_n]}(s) \|B(s, u(s))\|_{\calL_2(U, H)}^2 \dint s\Big)^{\frac{p}{4}}\\
            &\stackrel{\mathrm{(ii)}}{\leq} C_p\E\Big(\sup\limits_{t\in[0, T]}\|u(t)\|_H^2 \int_0^{\tau_n}  \|B(s, u(s))\|_{\calL_2(U, H)}^2 \dint s\Big)^{\frac{p}{4}}\\
            &\stackrel{\mathrm{(ii)}}{\leq} C_p\Big(\E\sup\limits_{t\in[0, T]}\|u(t)\|_H^p\Big)^{\frac{1}{2}}\Big(\E\Big(\int_0^{\tau_n} \|B(s, u(s))\|_{\calL_2(U, H)}^2 \dint s\Big)^{\frac{p}{2}}\Big)^{\frac{1}{2}}\\
            &\stackrel{\mathrm{(iii)}}{\leq} C_p\frac{1}{2\varepsilon}\E\sup\limits_{t\in[0, T]}\|u(t)\|_H^p + C_p\frac{\varepsilon}{2}\E\Big(\int_0^{\tau_n} \|B(s, u(s))\|_{\calL_2(U, H)}^2 \dint s\Big)^{\frac{p}{2}},
        \end{split}
    \end{equation}
    where in (i) we have applied the Burkholder-Davis-Gundy inequality, (ii) follows from H\"older's inequality, and (iii) is a consequence of Young's inequality. Applying \condref{it:bound2}, the $B$-term can be estimated as
    \begin{align*}
         & \E\Big(\int_0^{\tau_n} \|B(s, u(s))\|_{\calL_2(U, H)}^2 \dint s\Big)^{\frac{p}{2}} \leq \E\Big(\int_0^{\tau_n} \big( f(s) + K_B \|u(s)\|_H^2 + K_{\alpha} \|u(s)\|_V^\alpha \big) \dint s\Big)^{\frac{p}{2}}          \\
         & \leq b_p\E\Big(\int_0^T f(s) \dint s \Big)^{\frac{p}{2}} + b_p K_B^{\frac p 2} \E\Big(\int_0^T \|u(s)\|_H^2 \dint s\Big)^{\frac{p}{2}}+ b_pK_\alpha^{\frac{p}{2}}\E\Big(\int_0^{\tau_n} \|u(s)\|_V^\alpha \dint s\Big)^{\frac{p}{2}} \\
         & \leq b_p\fterm + b_p K_B^{\frac p 2} T^{\frac{p}{2}}\E\sup\limits_{t\in[0, T]}\|u(t)\|_H^p + b_p K_\alpha^{\frac{p}{2}}\E\Big(\int_0^{\tau_n} \|u(s)\|_V^\alpha \dint s \Big)^{\frac{p}{2}},
    \end{align*}
    where $b_p = 3^{\frac{p-2}{2}}$.
    Recombining this estimate with \eqref{sup_estimate} and \eqref{BDG_V_norm}, we obtain:
    \begin{align*}
        \E & \Big|\int_0^t B(s, u(s))^*  u(s) \dint W(s)\Big|^{\frac{p}{2}} \leq \Big(C_p \frac{1}{2\varepsilon} + b_pK_B^{\frac p 2} C_pT^{\frac{p}{2}}\frac{\varepsilon}{2} \Big) \E\sup\limits_{t\in[0, T]}\|u(t)\|_H^p             \\ &\qquad \qquad + b_pC_p\frac{\varepsilon}{2}\fterm + b_pK_\alpha^{\frac{p}{2}} C_p\frac{\varepsilon}{2}\E\Big(\int_0^{\tau_n} \|u(s)\|_V^\alpha \dint s\Big)^{\frac{p}{2}}\\
           & \leq C_\varepsilon(1+T^{\frac{p}{2}}) e^{CT}\Big[\E\|u(0)\|_H^p +  \fterm\Big] + b_p K_\alpha^{\frac{p}{2}} C_p\frac{\varepsilon}{2}\E\Big(\int_0^{\tau_n} \|u(s)\|_V^\alpha \dint s\Big)^{\frac{p}{2}}
    \end{align*}
    Using this and \eqref{est_ush_term} in \eqref{V_norm}, it follows that
    \begin{align*}
        \theta^{\frac{p}{2}}\E\Big(\int_0^{\tau_n}\|u(s)\|_V^\alpha \dint s\Big)^{\frac{p}{2}} & \leq C_\varepsilon'(1+T^{\frac{p}{2}}) e^{C T}\Big[\E\|u(0)\|_H^p + \fterm\Big]                                                                     \\
                                                                                              & \phantom{\leq} + a_p b_p 2^{\frac{p-2}{2}}K_\alpha^{\frac{p}{2}} C_p\varepsilon\E\Big(\int_0^{\tau_n} \|u(s)\|_V^\alpha \dint s\Big)^{\frac{p}{2}}.
    \end{align*}
    Therefore, choosing $\varepsilon > 0$ small enough, we obtain
    \begin{equation*}
        \E\Big(\int_0^T \|u(s)\|_V^\alpha \dint s\Big)^{\frac{p}{2}} \leq C''e^{C''T}\Big(\E\|u(0)\|_H^p + \fterm\Big).
    \end{equation*}

    Since we have estimated all three terms in the above steps, this finishes the proof.
\end{proof}

\begin{remark}
    One can also prove an estimate for the integral of $\|u(s)\|_H^{p-2}\|u(s)\|_V^\alpha$. Indeed, by H\"older's and Young's inequality
    \begin{align*}
        \E\int_0^{T} \|u(s)\|_H^{p-2} \|u(s)\|_V^\alpha \dint s & \leq \E \sup_{s\in [0,T]}\|u(s)\|_H^{p-2} \int_0^{T} \|u(s)\|_V^\alpha \dint s
        \\
        & \le \tfrac{p-2}{p} \E \sup_{t \in [0,T]} \|u(t)\|_H^p + \tfrac 2 p \E \Big( \int_0^T \|u(t)\|_V^\alpha \dint t\Big)^{\frac p 2},
    \end{align*}
    where the last line is bounded by the left-hand side of \eqref{eq:estapriorimain}.
\end{remark}

If $K_B = K_c = 0$ in Assumptions~\ref{main_assumptions} \condref{it:coerc} and \condref{it:bound2}, it is possible to improve the dependency on $p$ in estimate~\eqref{eq:estapriorimain}. Here the condition $p\geq \beta+2$ is not needed.
\begin{corollary}\label{cor:a_priori_remark}
    Suppose $u$ is a solution of equation \eqref{eq:variationalintro} with initial condition $u(0)\in L^{p}(\Omega; H)$ and \condref{it:coerc}, \condref{it:bound1}, \condref{it:bound2} from Assumptions ~\ref{main_assumptions} hold with $K_B = K_c=0$ and $f\in L^\frac{p}{2}(\Omega; L^1([0, T]))$. Then there exists a constant $C$ only depending on $\alpha, \beta, \theta, K_A, K_{\alpha}$ such that
    \begin{equation}\label{eq:estimateforallpsup}
    \begin{aligned}
        \|u\|_{L^p(\Omega; C([0, T]; H))} + & p^{-1/2}\|u\|_{L^p(\Omega;L^2([0,T];V))} \\ & \leq C\big[\|u(0)\|_{L^p(\Omega; H)} + \|f\|^{\frac{1}{2}}_{L^p(\Omega; L^1(0, T))}\big].
    \end{aligned}
    \end{equation}
    Moreover, if $B(v)^*v = 0$ for all $v\in V$, then
    the above estimates hold for all $p\in [2, \infty]$, and $p^{-1/2}$ can be omitted.
\end{corollary}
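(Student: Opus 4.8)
Here is my plan.

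The plan is to re-run the proof of Theorem~\ref{a_priori_theorem} almost verbatim, but recording how every constant depends on $p$ and on $T$, and then to cash in the simplifications brought by $K_B=K_c=0$. The key observation is that with $K_B=K_c=0$ the coefficient $p\big(\tfrac{\sqrt2 K_B}{\varepsilon_1}+\tfrac{K_c}{2}+\tfrac{\sqrt2 K_\alpha K_c}{\varepsilon_1\theta}\big)$ multiplying $\E\int_0^{t\wedge\tau_n}\|u(s)\|_H^p\,\dint s$ on the right-hand side of \eqref{final_inter_estimate} is zero, so the analogue of \eqref{intermediateresult} has no $\E\int_0^t\|u\|_H^p$ term: Step~1 then closes \emph{without} Gronwall's inequality and the factor $e^{CT}$ never appears. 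For the same reason the $\|u(s)\|_H^2$-contributions in \eqref{V_norm}, and with them the estimate \eqref{est_ush_term} whose $T$-powers were produced only by $\int_0^T\|u\|_H^2\le T\sup_t\|u\|_H^2$, drop out of Step~2. Thus Steps~1 and~2 produce, with constants $C_p,\widetilde C_p$ independent of $T$,
\begin{align*}
\E\sup_{t\in[0,T]}\|u(t)\|_H^p &\le C_p\Big(\E\|u(0)\|_H^p+\E\big(\textstyle\int_0^T f(s)\,\dint s\big)^{p/2}\Big),\\
\E\Big(\textstyle\int_0^T\|u(s)\|_V^\alpha\,\dint s\Big)^{p/2} &\le \widetilde C_p\Big(\E\|u(0)\|_H^p+\E\big(\textstyle\int_0^T f(s)\,\dint s\big)^{p/2}\Big).
\end{align*}

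Next I would make the $p$-dependence of $C_p$ and $\widetilde C_p$ explicit. The idea is to choose the free parameters as powers of $p$: take $\varepsilon_1\sim p^{-1}$ in Step~1 so that $1-p\sqrt2\,\varepsilon_1$ stays bounded away from $0$ uniformly in $p$, then $\varepsilon_2$ so that $\phi(\varepsilon_1,\varepsilon_2)=\tfrac12$, and analogously for the parameter $\varepsilon$ in \eqref{BDG_V_norm}; with these choices in hand one carries out the (elementary but tedious) accounting of how the constants in front of $\E\|u(0)\|_H^p$ and $\E(\int_0^T f)^{p/2}$ grow with $p$. The point is that, after taking $p$-th roots of the two displayed inequalities and using $(a+b)^{1/p}\le a^{1/p}+b^{1/p}$, each such constant contributes only a uniformly bounded factor over $p\in[2,\infty)$, the one exception being the Burkholder--Davis--Gundy constant $C_p$ appearing in step~(i) of \eqref{BDG_V_norm}, whose powers accumulated through Step~2 add up precisely to the factor $p^{1/2}$ carried by the $V$-norm --- this is the origin of the $p^{-1/2}$ in \eqref{eq:estimateforallpsup}. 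Passing from $\big(\E(\int_0^T f)^{p/2}\big)^{1/p}$ to $\|f\|^{1/2}_{L^p(\Omega;L^1(0,T))}$ by Jensen's inequality then yields \eqref{eq:estimateforallpsup}.

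For the last assertion, suppose $B(v)^*v=0$ for all $v\in V$. Then the stochastic integral $\int_0^{\cdot}B(s,u(s))^*u(s)\,\dint W(s)$ is identically zero and the $p$-dependent term of \condref{it:coerc} disappears, so \condref{it:coerc} reduces to $2\langle A(t,v),v\rangle+\|B(t,v)\|_{\calL_2(U,H)}^2\le-\theta\|v\|_V^\alpha+f(t)$ for all $v\in V$. Applying Itô's formula to $\|u(t)\|_H^2$ (Lemma~\ref{ito_p} with $p=2$, exactly as in Step~2 of the proof of Theorem~\ref{a_priori_theorem}) and invoking this inequality, the stochastic integral vanishes and one obtains, pathwise,
\[
\|u(t)\|_H^2+\theta\int_0^t\|u(s)\|_V^\alpha\,\dint s\le\|u(0)\|_H^2+\int_0^t f(s)\,\dint s,\qquad t\in[0,T],
\]
whence $\sup_{t\in[0,T]}\|u(t)\|_H^2+\theta\int_0^T\|u(s)\|_V^\alpha\,\dint s\le\|u(0)\|_H^2+\int_0^T f(s)\,\dint s$ almost surely. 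Taking the $L^p(\Omega)$-norm of the square root of both sides for any $p\in[2,\infty]$ --- for $p=\infty$ this is just the almost sure bound --- and using $\sqrt{a+b}\le\sqrt a+\sqrt b$ gives both estimates with a constant depending only on $\theta$, with no dependence on $p$ and with the factor $p^{-1/2}$ superfluous.

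The hard part will be the bookkeeping in the second paragraph --- in particular, verifying that the coefficient multiplying $\E\sup_t\|u(t)\|_H^p$ on the right-hand side of the analogue of \eqref{intermediateresult}, which arises jointly from the stochastic integral term of Step~1 and from \eqref{f_s_estimate}, can be kept strictly below $1$ by a choice of $\varepsilon_1,\varepsilon_2$ under which all remaining constants stay controlled under the $p$-th root, and then carrying out the matching count of the BDG constant $C_p$ in Step~2 that isolates the surviving power $p^{1/2}$. Everything else repeats the arguments of Section~\ref{sec:proofMain}.
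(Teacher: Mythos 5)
Your proposal is correct and follows essentially the same route as the paper: rerun Theorem~\ref{a_priori_theorem} with $K_B=K_c=0$ so that the Gronwall term and all $T$-powers disappear, choose $\varepsilon_1\sim p^{-1}$ and $\varepsilon_2$ accordingly so that all constants survive the $p$-th root via $\sup_p p^{\gamma/p}<\infty$, isolate the $\sqrt p$ coming from the Burkholder--Davis--Gundy constant in Step~2 (for which one does need the sharp constant $\gamma_p\sim(2p)^{p/4}$, as the paper takes from Carlen--Kr\'ee), and observe that when $B(v)^*v=0$ the stochastic integral and the $p$-dependent coercivity term vanish. Your pathwise inequality in the last case is a slightly more direct phrasing of what the paper does, but it is the same mechanism.
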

The main point is that $C$ does not depend on $p$ and $T$. In particular, we can let $T\to\infty$ in \eqref{eq:estimateforallpsup} if $f$ is integrable over $\R_+$.

\begin{proof}
    Estimate~\eqref{final_inter_estimate} gives for every $\varepsilon_1, \varepsilon_2 > 0$:
    \begin{equation}\label{p_indep_start}
        \begin{split}
            & \big(1-\phi(\varepsilon_1,\varepsilon_2)\big)\E\sup\limits_{t\in[0, S]}\|u(t\wedge\tau_n)\|_H^p   \\
            & \qquad \leq \big(1+K_\alpha\tfrac{2\sqrt{2}}{\varepsilon_1\theta}\big)\E\|u(0)\|_H^p + 2\varepsilon_2^{\frac{2-p}{p}}\big(\tfrac{\sqrt{2}}{\varepsilon_1}+\tfrac{1}{2}+K_\alpha\tfrac{\sqrt{2}}{\varepsilon_1\theta}\big)\E\Big(\int_0^{\tau_n}f(s) \dint s\Big)^{\frac{p}{2}},\\
        \end{split}
    \end{equation}
    where $\phi(\varepsilon_1, \varepsilon_2) = p\sqrt{2}\varepsilon_1  +(p-2)\varepsilon_2\big(\tfrac{\sqrt{2}}{\varepsilon_1}+\tfrac{1}{2}+K_\alpha\tfrac{\sqrt{2}}{\varepsilon_1\theta}\big)$. Choosing
    $$\varepsilon_1 = \frac{1}{2\sqrt{2}p}, \qquad \varepsilon_2 = \frac{1}{2(p-2)(8p+1+K_\alpha\frac{8p}{\theta})}$$
    gives $\phi(\varepsilon_1,\varepsilon_2) = \frac 1 4$. Moreover,
    $$\frac{1}{\varepsilon_2} \leq 16p^2(1+K_\alpha \tfrac{1}{\theta}) + p^2 + 1 \leq A p^2,$$
    where $A$ is a constant depending on $K_\alpha$ and $\theta$. Therefore, we get:
    \begin{align*}
        \tfrac{1}{4}\E\sup\limits_{t\in[0, S]}\|u(t\wedge\tau_n)\|_H^p & \leq (1+K_\alpha\tfrac{8p}{\theta})\E\|u(0)\|_H^p + (Ap^2+1)^{\frac{p-2}{p}}\E\Big(\int_0^{\tau_n} f(s) \dint s\Big)^{\frac{p}{2}}
    \end{align*}
    Taking $1/p$-th powers, the supremum of \eqref{eq:estimateforallpsup} follows since for every $\gamma>0$,
    \[
        \sup_{p\in [2, \infty)}p^{\gamma/p} = \sup_{p \in [2,\infty)} \big(1+(p-1)\big)^{\gamma/p} \le \sup_{p \in [2,\infty)} e^{\gamma (p-1)/p} = e^\gamma.
    \]
    Under the additional assumption $B(v)^*v=0$, it follows that condition~\condref{it:coerc} holds for all $p\in [2, \infty)$. Therefore, we can let $p\to \infty$ in \eqref{eq:estimateforallpsup}.

    In order to derive the estimate~\eqref{eq:estimateforallpsup} for the $V$-term, we use \eqref{V_norm} and the assumption $K_c = 0$ to find that
    \begin{equation}\label{V_norm_ineq}
        \begin{split}
            \frac{\theta^{\frac{p}{2}}}{a_p}\E\Big(\int_0^{T}  \|u(s)\|_V^\alpha &\dint s\Big)^{\frac{p}{2}}
            \leq \E\|u(0)\|_H^p + \fterm  \\
            & + 2^{\frac{p}{2}}\E\Big|\int_0^{T} B(s, u(s))^*u(s)\dint W(s)\Big|^{\frac{p}{2}} ,
        \end{split}
    \end{equation}
    where $a_p = 2^{p-2}$. If $B(v)^*v = 0$ for all $v \in V$, then the stochastic integral vanishes and thus \eqref{V_norm_ineq} already implies the required result.

    It remains to prove estimate~\eqref{eq:estimateforallpsup} for the $V$-norm in the case the stochastic integral in \eqref{V_norm_ineq} does not vanish. For this we use the Burkholder-Davis-Gundy inequality with $\gamma_p = \frac{(2p)^{p/4}}{2}$  as in \eqref{BDG_V_norm} (see \cite[Theorem~A]{carlen_1991}), giving for all $\varepsilon>0$
    \begin{align*}
        &\E\Big|\int_0^{T}  B(s, u(s))^*u(s)\dint W(s)\Big|^{\frac{p}{2}}\\
        &\leq \frac{\gamma_p }{\varepsilon}\E\sup\limits_{t\in[0, T]}\|u(t)\|_H^p + \gamma_p \varepsilon \E\Big(\int_0^{T} \|B(s, u(s))\|_{\calL_2(U, H)}^2 \dint s \Big)^{\frac{p}{2}},
    \end{align*}
    where $\varepsilon > 0$ is arbitrary. Using assumption \condref{it:bound2}, we additionally obtain
    \begin{equation*}
        \begin{split}
            &\E\Big(\int_0^{T} \|B(s, u(s))\|_{\calL_2(U, H)}^2 \dint s \Big)^{\frac{p}{2}}\\
            &+\leq 2^{\frac{p-2}{2}}\fterm + 2^{\frac{p-2}{2}}K_\alpha^{\frac{p}{2}}\E\Big(\int_0^{T} \|u(s)\|_V^\alpha \dint s\Big)^{\frac{p}{2}}.
        \end{split}
    \end{equation*}
    Recombining all terms with inequality \eqref{V_norm_ineq} we find
    \begin{align*}
        \frac{\theta^{\frac{p}{2}}}{a_p} \E\Big(\int_0^{T} & \|u(s)\|_V^\alpha \dint s\Big)^{\frac{p}{2}} \leq \E\|u(0)\|_H^p + (1+ \gamma_p \varepsilon 2^{p-1}) \fterm
        \\ &+
        \tfrac{2^{\frac{p}{2}}\gamma_p}{\varepsilon}\E\sup\limits_{t\in[0, T]} \|u(t)\|_H^p
        + 2^{p-1} K_\alpha^{\frac{p}{2}}\gamma_p \varepsilon \E\Big(\int_0^{T}\|u(s)\|_V^\alpha\Big)^{\frac{p}{2}}.
    \end{align*}
    Therefore, setting $\varepsilon = \frac{\theta^{\frac{p}{2}}}{a_p 2^{p+1} K_\alpha^{\frac{p}{2}}\gamma_p}$
    we obtain
    \begin{align*}
        \frac{\theta^{\frac{p}{2}}}{2 a_p} \E\Big(\int_0^{T}  \|u(s)\|_V^\alpha \dint s\Big)^{\frac{p}{2}} & \leq \E\|u(0)\|_H^p + (1+ \gamma_p \varepsilon 2^{p-1}) \fterm
        \\ & \qquad +
        \tfrac{2^{\frac{p}{2}}\gamma_p}{\varepsilon}\E\sup\limits_{t\in[0, T]} \|u(t)\|_H^p.
    \end{align*}
    Taking $p$-th powers and observing that the leading term is $\gamma_p^{2/p}\leq C \sqrt{p}$, we arrive at the desired inequality.
\end{proof}

Given the a priori estimates of Theorem~\ref{a_priori_theorem}, one can now complete the proof of Theorem~\ref{Main_theorem} by showing existence and uniqueness as in the classical case $p=2$. Details are standard and can be found in \cite{rockner_2010}. As our assumptions differ from the latter some changes are required, and in particular, we require $p\geq \beta+2$, which is needed for technical reasons in the existence proof, but can often be avoided by a localization argument. Note that it was not used in Theorem~\ref{a_priori_theorem}. For details we refer to the existence and uniqueness proofs in \cite{brzezniak_strong_2014, neelima_2020}.

\section{Applications}\label{sec:appl}
In this section, we apply our framework to
\begin{itemize}
    \item linear scalar second-order parabolic equations, namely the stochastic heat equation with both Dirichlet (section~\ref{stoch_heat_dir}) and Neumann boundary conditions (section~\ref{stoch_heat_neu}), in which the $p$-dependent term in the coercivity condition \ref{it:coerc} reduces to the classical setting in certain cases.
    \item semilinear second-order parabolic equations, namely the stochastic Burgers' equation (section~\ref{stoch_burg}) and the stochastic Navier-Stokes equations in two dimensions (section~\ref{stoch_nav_sto}),
    \item systems of SPDEs (section~\ref{systems_SPDE}) and higher-order SPDEs (section~\ref{higher_SPDE}) as treated in \cite{du_2020,wang_2020},
    \item the fully nonlinear evolution induced by the $p$-Laplacian influenced by noise (section~\ref{stoch_p_lapl}).
\end{itemize}
The treated examples demonstrate the wide range of applicability of our unifying abstract framework. In several cases the regularity estimates in $L^p(\Omega)$ for $p> 2$ seem new. In all cases, the approach to prove them via our Theorem \ref{Main_theorem} also seems new. The variety of the examples will hopefully be enough to explain the reader how to apply our framework to concrete SPDEs.

\subsection{Stochastic heat equation with Dirichlet boundary conditions}\label{stoch_heat_dir}
We consider a stochastic heat equation with additive noise and Dirichlet boundary conditions.
\begin{equation}\label{eq: stoch_heat_equation_dir}
    \dint u(t) = \Big(\sum\limits_{i, j =1}^d \partial_i (a^{ij}\partial_{j} u(t)) + \phi(t) \Big)\dint t + \sum\limits_{k=1}^\infty\Big(\sum\limits_{i=1}^d b_k^i\partial_i u(t) + \psi_{k, t} \Big) \dint W_k(t).
\end{equation}

Here the $W_k(t)$ are real-valued Wiener processes. In what follows, we use:
\begin{assumptions}\label{ass: stoch_heat_equation_dir}
    Let $\mathcal{D} \subseteq \R^d$ be a an open set. Let
    $$(V, H, V^*) = (H_0^1(\mathcal{D}), L^2(\Distr), H^{-1}(\Distr))$$ and $U = \ell^2$.
Suppose that $a^{ij} \in L^\infty(\Omega \times [0,T] \times \mathcal D)$ for $1 \le i,j \le d$ and $(b_k^i)_{k = 1}^\infty \in L^\infty(\Omega \times [0,T]; W^{1,\infty}(\mathcal D;\ell^2))$ for $1\leq i \leq d$. Furthermore, we assume that the coefficients are progressively measurable. Define
    \begin{equation}\label{eq: sigma_ij_dir}
        \sigma^{ij} = \sum\limits_{k=1}^\infty b_k^{i} b_k^{j}, \qquad i, j \in\mathbb{N}
    \end{equation}
    and suppose that the uniform ellipticity condition on $a^{ij}$ and $b_k^{i}$:
    \begin{equation}\label{cond:ellipticity_condition_dir}
        \sum\limits_{i, j = 1}^d \left(2a^{ij}-\sigma^{ij}\right)\xi^i\xi^j \geq \theta |\xi|^2 \qquad \text{for all } \xi \in \mathbb{R}^d
    \end{equation}
    holds true where $\theta > 0$. Furthermore, assume $\phi \in L^{p}(\Omega; L^2([0, T]; H^{-1}(\Distr)))$,\\ $\psi \in L^{p}(\Omega; L^2([0, T]; L^2(\Distr;\ell^2)))$, and $u_0 \in L^{p}(\Omega; L^2(\mathcal{D}))$, where $p \geq 2$.
\end{assumptions}
Equation~\eqref{eq: stoch_heat_equation_dir} can be reformulated as a stochastic evolution equation of the form $$\dint u(t) = A(t, u(t)) \ \dint t + \sum\limits_{k=1}^\infty B_k(t, u(t)) \ \dint W_k(t),$$
with the deterministic linear operator $A(t): H^1_0(\mathcal{D}) \to H^{-1}(\mathcal{D})$ defined by
\begin{equation*}
    \langle A(t, u), v\rangle = -\sum_{i,j=1}^{d}\int_\mathcal{D} a^{ij}(\partial_i u \, \partial_j v \, \dint x +\langle \phi(t), v\rangle \qquad \text{for } u, v\in H_0^1(\mathcal{D}),
\end{equation*}
and stochastic operators $B_k(t): H_0^1(\Distr) \to L^2(\Distr)$ given by
\begin{equation*}
    B_k(t, v) = \sum\limits_{i=1}^d b_k^{i} \partial_i v + \psi_{k, t} \qquad \text{for } v\in H_0^1(\mathcal{D}).
\end{equation*}
It turns out that the $p$-dependent term in the coercivity condition \condref{it:coerc} vanishes. Therefore, the solution admits moment estimates of all orders $p \geq 2$, only limited by the integrability of the additive noise and the initial condition:

\begin{proposition}\label{prop: stoch_heat_equation_dir}
    Suppose that Assumptions~\ref{ass: stoch_heat_equation_dir} are satisfied. Then, a unique variational solution $u$ of equation~\eqref{eq: stoch_heat_equation_dir} in the sense of Definition~\ref{solution_definition} exists and the following estimates hold:
    \begin{align*}
        \E&\sup\limits_{t\in[0, T]}\|u(t)\|_{L^2(\mathcal{D})}^p + \E\Big(\int_0^T \|u(t)\|_{H_0^1(\Distr)}^2 \dint t\Big)^{\frac{p}{2}}
        \\ & \leq Ce^{CT}\bigg(\E\|u(0)\|_{L^2(\mathcal{D})}^p + \E \Big(\int_0^T \|\phi(t)\|_{H^{-1}(\mathcal{D})}^2 \dint t\Big)^{\frac{p}{2}}  + \E \Big(\int_0^T \|\psi(t)\|_{L^2(\mathcal{D};\ell^2)}^2 \dint t\Big)^{\frac{p}{2}}\bigg)
    \end{align*}
    where $C$ depends on $\theta, p, a^{ij}$ and $b_k^i$ for all $i, j, k \in \mathbb{N}$.
\end{proposition}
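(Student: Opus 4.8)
The plan is to cast \eqref{eq: stoch_heat_equation_dir} in the abstract form \eqref{eq:variationalintro} with the Gelfand triple $(V,H,V^*)=(H_0^1(\mathcal D),L^2(\mathcal D),H^{-1}(\mathcal D))$ and $U=\ell^2$, to verify Assumptions~\ref{main_assumptions} with $\alpha=2$, $\beta=0$, the given $p$, and
\[
  f(t)=C_p\big(\|\phi(t)\|_{H^{-1}(\mathcal D)}^2+\|\psi_t\|_{L^2(\mathcal D;\ell^2)}^2\big)
\]
for a suitable constant $C_p$, and then to invoke Theorem~\ref{Main_theorem}. (One could alternatively absorb $\phi$ and $\psi$ first via Remark~\ref{rem:additive}, since for $\alpha=2$ the required integrability is exactly $\phi\in L^p(\Omega;L^2(0,T;H^{-1}(\mathcal D)))$ and $\psi\in L^p(\Omega;L^2(0,T;\calL_2(\ell^2,L^2(\mathcal D))))$.) Measurability of $A$ and $B$ as $\mathcal P\otimes\mathcal B(V)$-maps is immediate from progressive measurability of the coefficients and continuity of differentiation and of multiplication by the bounded coefficient fields; hemicontinuity \condref{it:hem} is trivial since $\lambda\mapsto\langle A(t,u+\lambda v),w\rangle$ is affine; and \condref{it:bound1}, \condref{it:bound2} follow at once from $\|A(t,v)\|_{H^{-1}}\le\|a\|_{L^\infty}\|v\|_{H_0^1}+\|\phi(t)\|_{H^{-1}}$ and $\|B(t,v)\|_{\calL_2(U,H)}^2\le2\|\sigma\|_{L^\infty}\|\nabla v\|_{L^2}^2+2\|\psi_t\|_{L^2(\mathcal D;\ell^2)}^2$ together with $\beta=0$. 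Because the additive parts cancel in differences, the local weak monotonicity \condref{it:weak_mon} reduces to the homogeneous version of the coercivity estimate below applied to $u-v$.

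The decisive point is \condref{it:coerc}. First I would expand
\begin{align*}
  2\langle A(t,v),v\rangle+\|B(t,v)\|_{\calL_2(U,H)}^2 &= -\int_{\mathcal D}\sum_{i,j=1}^d\big(2a^{ij}-\sigma^{ij}\big)\,\partial_iv\,\partial_jv\,\dint x\\
  &\quad +2\sum_k\int_{\mathcal D}\Big(\sum_i b_k^i\,\partial_iv\Big)\psi_{k,t}\,\dint x+2\langle\phi(t),v\rangle+\|\psi_t\|_{L^2(\mathcal D;\ell^2)}^2 ,
\end{align*}
bound the first term above by $-\theta\|\nabla v\|_{L^2}^2$ through the stochastic parabolicity condition~\eqref{cond:ellipticity_condition_dir}, and absorb the cross term and the $\phi$-term into $\varepsilon\big(\|\nabla v\|_{L^2}^2+\|v\|_{L^2}^2\big)+C_\varepsilon\big(\|\phi(t)\|_{H^{-1}}^2+\|\psi_t\|_{L^2(\mathcal D;\ell^2)}^2\big)$ by Young's inequality; taking $\varepsilon$ small and writing $\|\nabla v\|_{L^2}^2=\|v\|_{H_0^1}^2-\|v\|_{L^2}^2$ (no Poincaré inequality is used, so $\mathcal D$ may be unbounded) leaves a term $-\tfrac\theta2\|v\|_{H_0^1}^2$ plus a multiple of $\|v\|_{L^2}^2$. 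For the $p$-dependent term I would observe that $B(t,v)^*v\in\ell^2$ has $k$-th coordinate $\int_{\mathcal D}\big(\sum_i b_k^i\partial_iv\big)v\,\dint x+\int_{\mathcal D}\psi_{k,t}v\,\dint x$ and that, since $v\in H_0^1(\mathcal D)$ annihilates the boundary term, integration by parts gives $\int_{\mathcal D}\big(\sum_i b_k^i\partial_iv\big)v\,\dint x=-\tfrac12\int_{\mathcal D}(\DIV b_k)\,v^2\,\dint x$; hence
\[
  \|B(t,v)^*v\|_{\ell^2}^2\le\tfrac12\|\DIV b\|_{L^\infty(\mathcal D;\ell^2)}^2\,\|v\|_{L^2}^4+2\|\psi_t\|_{L^2(\mathcal D;\ell^2)}^2\,\|v\|_{L^2}^2 ,
\]
so that $(p-2)\|B(t,v)^*v\|_{\ell^2}^2/\|v\|_{L^2}^2$ is controlled by a $p$-dependent multiple of $\|v\|_{L^2}^2$ plus a $p$-dependent multiple of $\|\psi_t\|_{L^2(\mathcal D;\ell^2)}^2$; when $\DIV b_k\equiv0$ and $\psi\equiv0$ it is identically zero, which is the ``classical'' case referred to in the statement. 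Collecting the estimates shows that \condref{it:coerc} holds with $\theta$ replaced by $\theta/2$, with $K_c$ depending on $p$, $\theta$ and $\|b\|_{W^{1,\infty}(\mathcal D;\ell^2)}$, and with $f$ as declared; the integrability hypotheses on $\phi$ and $\psi$ guarantee $f\in L^{p/2}(\Omega;L^1(0,T))$, and in particular the leading dissipative coefficient stays bounded below by $\theta/2$ for every $p\ge2$, which is why there is no restriction on $p$.

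Finally I would feed this into Theorem~\ref{Main_theorem}: since $p\ge2=\beta+2$ and $u_0\in L^p(\Omega,\F_0;H)$, it yields a unique solution in the sense of Definition~\ref{solution_definition} together with the bound~\eqref{eq:aprioripnew}; substituting the explicit $f$ and using $\big(\int_0^T(g+h)\,\dint t\big)^{p/2}\le2^{\frac p2-1}\big[\big(\int_0^T g\,\dint t\big)^{p/2}+\big(\int_0^T h\,\dint t\big)^{p/2}\big]$ to separate the $\phi$- and $\psi$-contributions produces exactly the claimed inequality, with $C$ depending on $\theta$, $p$, $\|a\|_{L^\infty}$ and $\|b\|_{W^{1,\infty}(\mathcal D;\ell^2)}$. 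I expect the only real obstacle to be the treatment of the $p$-dependent term: the task is to recognise that the Dirichlet boundary condition converts $B(t,v)^*v$ from a first-order expression into a zeroth-order multiplier acting on $v^2$, so that it is swallowed by $K_c\|v\|_H^2$ and $f$ and never competes with the dissipation $-\theta\|v\|_V^2$; everything else is routine bookkeeping with Young's and Hölder's inequalities.
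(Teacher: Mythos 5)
Your proposal is correct and follows essentially the same route as the paper: verify Assumptions~\ref{main_assumptions} with $\alpha=2$, $\beta=0$, handle the inhomogeneities $\phi,\psi$ via Remark~\ref{rem:additive} (or equivalently through $f$), and observe that for Dirichlet boundary conditions integration by parts turns $(B(t,v)^*v)_k$ into $\pm\tfrac12\int_{\mathcal D}(\DIV b_k)\,v^2\,\dint x$, so the $p$-dependent term in \condref{it:coerc} is absorbed into $K_c\|v\|_{L^2}^2$ with $K_c$ of order $(p-2)\|\DIV b\|_{L^\infty(\mathcal D;\ell^2)}$. This is exactly the paper's argument, so no further comparison is needed.
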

\begin{remark}
    Assuming that $\Distr$ is bounded and all $b_k$ are not space dependent, we can use Corollary~\ref{cor:a_priori_remark} to obtain $p$-independent constants, and even take $p= \infty$. That is, there exists a constant $C$ such that for all $p\in [2, \infty]$
    \begin{align*}
        &\|u\|_{L^p(\Omega; C([0, T]; L^2(\Distr)))} + \|u\|_{L^p(\Omega;L^2(0,T;H^1_0(\Distr)))} \\ & \leq C \Big[\|u(0)\|_{L^p(\Omega; L^2(\Distr))}  + \|\phi\|_{L^p(\Omega;L^2(0,T;H^{-1}(\mathcal D)))}                                                                       + \|\psi\|_{L^p(\Omega;L^2(0,T;L^2(\mathcal{D};\ell^2)))}\Big]
    \end{align*}
    where $C$ only depends on $\theta, a^{ij}, b_k^{i}$ for all $i, j, k, \in \mathbb{N}$.
\end{remark}
\begin{remark}\label{rem:Dirichletnoreg}
    A version of Proposition~\ref{prop: stoch_heat_equation_dir} holds if we only assume $(b_k^i)_{k = 1}^\infty \in L^\infty(\Omega \times [0,T]\times \mathcal D;\ell^2))$. However, in this case we can only use $\frac{\|B(t, v)^*v\|_U^2}{\|v\|_H^2}\leq \|B(t, v)\|^2_{\calL_2(U, H)}$ which leads to the $p$-dependent coercivity condition 
    \begin{equation*}
        \sum\limits_{i, j = 1}^d \left(2a^{ij}-(p-1)\sigma^{ij}\right)\xi^i\xi^j \geq \theta |\xi|^2 \qquad \text{for all } \xi \in \mathbb{R}^d.
    \end{equation*}
\end{remark}

\begin{proof}[Proof of Proposition~\ref{prop: stoch_heat_equation_dir}]
    By Remark~\ref{rem:additive} and Theorem~\ref{Main_theorem}, it suffices to show Assumptions~\ref{main_assumptions}, \condref{it:hem}-\condref{it:bound2}, for $(A,B)$ with $\alpha = 2$, $\phi = 0$, $\psi = 0$, and $f = 0$.
    Hemicontuinty \condref{it:hem} is immediate from the definition of $A$. For local weak monotonicity \condref{it:weak_mon}, observe that it suffices to prove the inequality for $v \in H_0^1(\mathcal{D})$ and $u = 0$ by linearity.
    Using uniform ellipticity \eqref{cond:ellipticity_condition_dir}, it follows:
    \begin{align}\label{eq: stoch_heat_equation_dir_H2}
        \begin{split}
            2\langle A(t, v), v\rangle + &
            \|B(t, v)\|^2_{\calL_2(\ell^2, L^2(\Distr))}\\
            &= -\sum\limits_{i, j=1}^\infty\int_\mathcal{D}2a^{ij} \partial_i v \, \partial_j v \, \dint x + \sum\limits_{k=1}^\infty \int_\mathcal{D} \sum\limits_{i, j = 1}^d b_k^{i} b_k^{j} \partial_i v \, \partial_j v \, \dint x\\
            &\ = \sum\limits_{i, j = 1}^d \int_{\mathcal{D}} (-2a^{ij}+\sigma^{ij}) \, \partial_i v \, \partial_j v \, \dint x\\
            &\leq-\theta\|v\|_{H_0^1(\mathcal{D})}^2 + \theta\|v\|_{L^2(\Distr)}^2,
        \end{split}
    \end{align}
    that is, \condref{it:weak_mon} is satisfied with $K =\theta$ (if $\Distr$ is bounded one can take $K=0$ by Poincar\'e's inequality). For coercivity \condref{it:coerc}, observe that the first two terms in \condref{it:coerc} form the first line of \eqref{eq: stoch_heat_equation_dir_H2}.  Therefore, it remains to derive an expression for $\|B(t, v)^*v\|_{\ell^2}^2/\|v\|_{L^2(\mathcal{D})}^2$, where $v\in H_0^1(\mathcal{D})$. Integration by parts gives

    \begin{equation*}
            (B(t, v)^*v)_k = \int_\mathcal{D} b^{i}_k \partial_i v v \dint x = \frac{1}{2}\int_\mathcal{D} \partial_i b^{i}_k v^2 \dint x.
    \end{equation*}
    Using the spatial regularity of $b_k^{i}$, we obtain:
    \begin{equation*}
        \begin{split}
            \Big \|k\mapsto \sum\limits_{i=1}^d \int_\mathcal{D} (b^i_k \partial_i v) v \dint x\Big \|_{\ell^2} &= \Big \|k\mapsto \sum\limits_{i=1}^d \int_\mathcal{D} \frac{1}{2}(\partial_i b^i_k) v^2 \dint x\Big \|_{\ell^2}\\
            &\leq \frac{1}{2} \int_\mathcal{D} \|\DIV(b)\|_{\ell^2} v^2 \dint x\\
            &\leq \|\DIV(b)\|_{L^\infty(\mathcal{D}; \ell^2)} \|v\|_{L^2(\mathcal{D})}^2.
        \end{split}
    \end{equation*}
    Therefore,
    \begin{equation*}
        \begin{split}
            2\langle A(t, v), v\rangle + \sum\limits_{k=1}^\infty \Big\|\sum\limits_{i=1}^d b^{ik}\partial_i v\Big\|_{L^2(\mathcal{D})}^2 &+ (p-2)\frac{\|(B_t(v)^*v)\|^2_{\ell^2}}{\|v\|_{L^2(\mathcal{D})}^2}\\
            &\leq -\theta\|v\|_{H_0^1(\mathcal{D})}^2 + C(p-2) \|v\|_{L^2(\mathcal{D})}^2,
        \end{split}
    \end{equation*}
    that is, \condref{it:coerc} is satisfied with $f = 0$ and $K_c = C(p-2)$.
    For the boundedness condition \condref{it:bound1}, let $u, v\in H_0^1(\mathcal{D})$. Then,
    \begin{equation*}
        \begin{split}
            |\langle A(t, u), v\rangle | \leq \sum\limits_{i,j = 1}^d \|a^{ij}\|_{L^\infty(\Distr)} \|u\|_{H_0^1(\mathcal{D})}\|v\|_{H_0^1(\mathcal{D})},
        \end{split}
    \end{equation*}
    that is, $\|A(t, u)\|_{H^{-1}(\mathcal{D})}^2 \leq \left(\sum\|a^{ij}\|_{L^\infty(\Omega \times [0,T] \times \mathcal D)}\right)^2 \|u\|_{H_0^1(\mathcal{D})}^2$, implying \condref{it:bound1} for $\alpha = 2$, $\beta = 0$, and $K_A = \left(\sum\|a^{ij}\|_{L^\infty(\Omega \times [0,T] \times \mathcal D)}\right)^2/2$. Similarly, because of \eqref{eq: sigma_ij_dir},
    \begin{equation*}
        \|B(t, v)\|_{L^2(\mathcal{D};\ell^2)}^2 \leq \Big\|\sum\limits_{i, j = 1}^d \sigma^{ij}\Big\|_{L^\infty(\Distr)} \|v\|_{H_0^1(\mathcal{D})}^2.
    \end{equation*}
    Hence condition \condref{it:bound2} holds with $K_\alpha = \Big\|\sum\limits_{i, j = 1}^d \sigma^{ij}\Big\|_{L^\infty(\Omega \times [0,T] \times \mathcal D)}$ and $K_B = 0$.
\end{proof}
From the above proof it follows that the regularity condition on $b$ in Assumption~\ref{ass: stoch_heat_equation_dir} can actually be weakened to $b\in L^\infty(\Omega \times [0,T]\times \mathcal D;\ell^2)$ and $\DIV(b)\in L^\infty(\mathcal{D}; \ell^2)$, and where the divergence only needs to exist in distributional sense.

\subsection{Stochastic heat equation with Neumann boundary conditions}\label{stoch_heat_neu}
The second equation we consider is the same stochastic heat equation as before, but now with Neumann boundary conditions on a domain $\mathcal{D} \subseteq \mathbb{R}^d$. For completeness, this equation is:
\begin{equation}\label{eq:stoch_heat_eq_neumann}
    \dint u(t) = \Big(\sum\limits_{i, j =1}^d \partial_i (a^{ij}\partial_{j} u(t)) + \phi(t) \Big)\dint t + \Big(\sum\limits_{k=1}^\infty\sum\limits_{i=1}^d b_k^{i}\partial_i u(t) + \psi_{t, k} \Big) \dint W_k(t).
\end{equation}
Most assumptions and computations will be similar as before, though some special care is needed to derive the coercivity condition in the Neumann setting.

\begin{assumptions}\label{ass: stoch_heat_eq_neumann}
    Let $p\in [2, \infty)$. Let $\mathcal{D} \subseteq \R^d$ be a bounded $C^1$-domain, and consider
    $$(V, H, V^*) = (H^1(\mathcal{D}), L^2(\Distr), H^1(\Distr)^*).$$
    Suppose that $a^{ij} \in L^\infty(\Omega \times [0,T] \times \mathcal D)$ for $1 \le i,j \le d$ and $(b_k^{i})_{k=1}^\infty \in L^\infty(\Omega \times [0,T]; W^{1,\infty}(\mathcal D;\ell^2))$ for $1\leq i \leq d$. Furthermore, we assume that the coefficients are progressively measurable. Define
    \begin{equation}\label{sigma_ij}
        \sigma^{ij} = \sum\limits_{k=1}^\infty b_k^{i} b_k^{j}, \qquad i, j \in\mathbb{N}
    \end{equation}
    and suppose that the uniform ellipticity condition on $a^{ij}$ and $b_k^{i}$:
    \begin{equation}\label{eq: stoch_heat_eq_neumann_ellipticity_condition}
        \sum\limits_{i, j = 1}^d \left(2a^{ij}-\sigma^{ij} - (p-2)C_b^2\right)\xi_i\xi_j \geq \theta |\xi|^2 \qquad \text{for all } \xi \in \mathbb{R}^d
    \end{equation}
    holds true where $\theta > 0$, and $C_b = \|b\cdot n\|_{L^\infty(\partial D; \ell^2)}$. Furthermore, assume $u_0 \in L^{p}(\Omega; L^2(\mathcal{D}))$,
    \[\phi \in L^{p}(\Omega; L^2([0, T]; H^1(\Distr)^*)) \ \ \text{and} \  \ \psi \in L^{p}(\Omega; L^2([0, T]; H^1(\Distr;\ell^2))).
    \]
\end{assumptions}

Equation~\eqref{eq:stoch_heat_eq_neumann} can be reformulated as a stochastic evolution equation of the form $$\dint u(t) = A(t, u(t)) \ \dint t + \sum\limits_{k=1}^\infty B_k(t, u(t)) \ \dint W_k(t),$$
with the deterministic linear operator $A(t): H^1(\mathcal{D}) \to H^1(\mathcal{D})^*$ defined by
\begin{equation}\label{eq: stoch_heat_equation_neumann_A}
    \langle A(t, u), v\rangle = -\sum_{i,j=1}^{d}\int_\mathcal{D} a^{ij}(t, x) \partial_i u \, \partial_j v \, \dint x +\langle \phi(t), v\rangle \qquad \text{for } u, v\in H^1(\mathcal{D}),
\end{equation}
and stochastic operators $B_k(t): H^1(\Distr) \to L^2(\Distr)$ given by
\begin{equation}\label{eq: stoch_heat_equation_neumann_B}
    B_k(t, v) = \sum\limits_{i=1}^d b_k^{i} \partial_i v + \psi_{k, t} \qquad \text{for } v\in H^1(\mathcal{D}).
\end{equation}
Unlike section~\ref{stoch_heat_dir}, the $p$-dependent term in the coercivity condition \condref{it:coerc} does not vanish completely and enters through the term $b\cdot n|_{\partial \Distr}$. If $b\cdot n$ vanishes at the boundary of $\Distr$, then the $p$-dependent term  solution admits moment estimates of all orders $p \geq 2$, only limited by the integrability of the additive noise and the initial condition. The main result for the Neumann case is:

\begin{proposition}\label{prop:Neumann}
    Suppose Assumptions~\ref{ass: stoch_heat_eq_neumann} hold. Then, a unique solution $u$ of equation \eqref{eq:stoch_heat_eq_neumann} exists and the following estimate holds:
    \begin{align*}
        & \E\sup\limits_{t\in[0, T]}\|u_t\|_{L^2(\mathcal{D})}^p + \E\Big(\int_0^T \|u_t\|_{H^1(\Distr)}^2 \dint t\Big)^{\frac{p}{2}} \\ & \leq Ce^{CT}\bigg(\E\|u_0\|_{L^2(\mathcal{D})}^p + \E \Big(\int_0^T \|\phi(t)\|_{H^{1}(\mathcal{D})^*}^2 \dint t\Big)^{\frac{p}{2}}  + \E \Big(\int_0^T \|\psi(t)\|_{L^2(\mathcal{D};\ell^2)}^2 \dint t\Big)^{\frac{p}{2}}\bigg)
    \end{align*}
    where $C$ depends on $\theta, p, a^{ij}$ and $b_k^{i}$ for all $i, j, k \in \mathbb{N}$.
\end{proposition}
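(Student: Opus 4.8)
The plan is to verify Assumptions~\ref{main_assumptions} for $(A,B)$ and then invoke Theorem~\ref{Main_theorem}, paralleling the proof of Proposition~\ref{prop: stoch_heat_equation_dir}. By Remark~\ref{rem:additive} the additive terms $\phi,\psi$ are absorbed into $\tilde f=\|\phi\|_{H^1(\mathcal D)^*}^2+\|\psi\|_{L^2(\mathcal D;\ell^2)}^2$, which lies in $L^{p/2}(\Omega;L^1(0,T))$ by the hypotheses; so it suffices to check Assumptions~\ref{main_assumptions} with $\alpha=2$, $\beta=0$, $f=0$ for the homogeneous operators. Hemicontinuity \condref{it:hem} is clear; \condref{it:bound1} follows from $|\langle A(t,u),v\rangle|\le(\sum_{i,j}\|a^{ij}\|_\infty)\|u\|_{H^1(\mathcal D)}\|v\|_{H^1(\mathcal D)}$; \condref{it:bound2} from $\|B(t,v)\|_{\calL_2}^2=\int_{\mathcal D}\sum_{i,j}\sigma^{ij}\partial_iv\,\partial_jv\le\|\sum_{i,j}\sigma^{ij}\|_\infty\|v\|_{H^1(\mathcal D)}^2$ (so $K_B=0$); and \condref{it:weak_mon} reduces by linearity to $u=0$, where $2\langle A(t,v),v\rangle+\|B(t,v)\|_{\calL_2}^2=-\int_{\mathcal D}\sum_{i,j}(2a^{ij}-\sigma^{ij})\partial_iv\,\partial_jv\le 0$ by ellipticity.

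The one genuinely new point is the coercivity condition \condref{it:coerc}, and here the Neumann boundary enters. Integration by parts now produces a boundary term:
\[
(B(t,v)^*v)_k=\int_{\mathcal D}\sum_{i=1}^d b_k^i(\partial_iv)\,v\,\dint x=\tfrac12\int_{\partial\mathcal D}(b_k\cdot n)\,v^2\,\dint S-\tfrac12\int_{\mathcal D}(\DIV b_k)\,v^2\,\dint x ,
\]
so that, splitting into boundary and interior parts and taking the $\ell^2$-norm via the duality argument of the Dirichlet proof,
\[
\|B(t,v)^*v\|_{\ell^2}\le\tfrac12 C_b\|v\|_{L^2(\partial\mathcal D)}^2+\tfrac12\|\DIV b\|_{L^\infty(\mathcal D;\ell^2)}\|v\|_{L^2(\mathcal D)}^2 .
\]
If $b\cdot n=0$ on $\partial\mathcal D$, i.e.\ $C_b=0$, the boundary term is absent and the argument is literally the Dirichlet one. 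In general, squaring and dividing by $\|v\|_{L^2(\mathcal D)}^2$, the interior and cross terms contribute $C\|v\|_{L^2(\mathcal D)}^2$ plus, via the trace theorem, an arbitrarily small multiple of $\|\nabla v\|_{L^2(\mathcal D)}^2$, while the boundary term contributes $\tfrac14 C_b^2\,\|v\|_{L^2(\partial\mathcal D)}^4/\|v\|_{L^2(\mathcal D)}^2$.

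For the boundary term I would use the sharp trace interpolation on the bounded $C^1$-domain $\mathcal D$: for every $\varepsilon>0$ there is $C_\varepsilon$ with $\|v\|_{L^2(\partial\mathcal D)}^2\le 2(1+\varepsilon)\|v\|_{L^2(\mathcal D)}\|\nabla v\|_{L^2(\mathcal D)}+C_\varepsilon\|v\|_{L^2(\mathcal D)}^2$, obtained by integrating $\DIV(v^2\Phi)$ against a $C^1$ vector field $\Phi$ with $\Phi\cdot n=1$ on $\partial\mathcal D$ and $\|\Phi\|_{L^\infty}\le 1+\varepsilon$; this yields $\tfrac14 C_b^2\,\|v\|_{L^2(\partial\mathcal D)}^4/\|v\|_{L^2(\mathcal D)}^2\le(1+\varepsilon')C_b^2\|\nabla v\|_{L^2(\mathcal D)}^2+C\|v\|_{L^2(\mathcal D)}^2$. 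Combined with the gain $2\langle A(t,v),v\rangle+\|B(t,v)\|_{\calL_2}^2\le-(\theta+(p-2)C_b^2)\|\nabla v\|_{L^2(\mathcal D)}^2$ coming from the $p$-dependent ellipticity \eqref{eq: stoch_heat_eq_neumann_ellipticity_condition}, the coefficient of $\|\nabla v\|_{L^2(\mathcal D)}^2$ after adding the $(p-2)$-term becomes $-(\theta+(p-2)C_b^2)+(p-2)(1+\varepsilon')C_b^2+(\text{small})=-\theta+(p-2)\varepsilon'C_b^2+(\text{small})$, which is $\le-\theta/2$ once $\varepsilon'$ and the remaining small parameters are chosen accordingly; rewriting $-\tfrac{\theta}{2}\|\nabla v\|^2=-\tfrac{\theta}{2}\|v\|_{H^1(\mathcal D)}^2+\tfrac{\theta}{2}\|v\|_{L^2(\mathcal D)}^2$ gives \condref{it:coerc} with $\theta$ replaced by $\theta/2$, $f=0$, and an explicit $K_c$ depending on $p$, $\theta$, $\|\DIV b\|_{L^\infty(\mathcal D;\ell^2)}$, $C_b$ and $\mathcal D$.

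I expect the bookkeeping in the last step to be the main obstacle: because the $(p-2)$-term in \condref{it:coerc} is quadratic in $\|v\|_{L^2(\partial\mathcal D)}^2$, a crude trace estimate overshoots the available gradient norm, and one must use the trace inequality with leading constant exactly $2$ — pushing curvature and lower-order corrections into the $\|v\|_{L^2(\mathcal D)}^2$ term — so that the loss is matched precisely by the $-(p-2)C_b^2$ built into hypothesis \eqref{eq: stoch_heat_eq_neumann_ellipticity_condition}, leaving $\theta$ to spare. With \condref{it:coerc} in place, Theorem~\ref{Main_theorem} together with Remark~\ref{rem:additive} give existence, uniqueness and the asserted estimate, the right-hand side being $\E\|u_0\|_{L^2(\mathcal D)}^p+\E(\int_0^T\|\phi(t)\|_{H^1(\mathcal D)^*}^2\dint t)^{p/2}+\E(\int_0^T\|\psi(t)\|_{L^2(\mathcal D;\ell^2)}^2\dint t)^{p/2}$.
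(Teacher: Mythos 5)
Your proposal is correct and follows essentially the same route as the paper: reduce to the homogeneous operators via Remark~\ref{rem:additive}, verify \condref{it:hem}, \condref{it:weak_mon}, \condref{it:bound1}, \condref{it:bound2} as in the Dirichlet case, and handle \condref{it:coerc} by integrating by parts to isolate the boundary term $\tfrac12\int_{\partial\mathcal D}(b_k\cdot n)v^2\,\dint S$ and controlling it with the sharp trace inequality $\|v\|_{L^2(\partial\mathcal D)}^2\le 2(1+\varepsilon)\|v\|_{L^2(\mathcal D)}\|\nabla v\|_{L^2(\mathcal D)}+C_\varepsilon\|v\|_{L^2(\mathcal D)}^2$, so that the leading loss $(p-2)(1+\varepsilon')C_b^2\|\nabla v\|^2$ is absorbed by the $-(p-2)C_b^2$ built into \eqref{eq: stoch_heat_eq_neumann_ellipticity_condition} (this is exactly Lemma~\ref{lemma: neumann_coercivity} and its use in the paper). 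The only cosmetic difference is that you derive the trace inequality directly from the divergence theorem with a vector field $\Phi$ satisfying $\Phi\cdot n=1$, whereas the paper cites \cite[Theorem 2.7]{Motron02}.
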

Remark~\ref{rem:Dirichletnoreg} applies in the Neumann case as well, and thus this gives an alternative to \eqref{eq: stoch_heat_eq_neumann_ellipticity_condition} which additionally works without smoothness of $b$.

Before starting the proof of the proposition, we state a lemma that is needed to show the coercivity condition \condref{it:coerc}.
\begin{lemma}\label{lemma: neumann_coercivity}
    Consider assumptions ~\ref{ass: stoch_heat_eq_neumann} and $B$ as defined in \eqref{eq: stoch_heat_equation_neumann_B}. For every $\varepsilon \in (0, 1)$ there exists a constant $C_\varepsilon > 0$ such that for every nonzero $v \in H^1(\D)$ one has
    \begin{equation}
        \frac{\|B(t, v)^*v\|_{\ell^2}^2}{\|v\|_{L^2(\D)}^2} \leq (1+\varepsilon)C_b^2\|\nabla v\|_{L^2(\D)}^2 + C_\varepsilon(C_b^2 + D_b^2)\|v\|_{L^2(\D)}^2,
    \end{equation}
    where $C_b^2 = \|b \cdot n\|_{L^\infty(\partial D; \ell^2)}^2$ and $D_b^2 = \|\DIV(b)\|_{L^\infty(D; \ell^2)}^2$, where $n$ is the outer normal and $\DIV$ denotes the divergence.
\end{lemma}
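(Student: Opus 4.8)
\emph{Plan.} The statement is really about the homogeneous part of $B$ in \eqref{eq: stoch_heat_equation_neumann_B}, i.e.\ the operator $v\mapsto\big(\sum_{i=1}^d b_k^i\partial_i v\big)_{k\ge1}$ (the additive term $\psi$ is split off beforehand via Remark~\ref{rem:additive}), so that $(B(t,v)^*v)_k=\int_{\mathcal D}(b_k\cdot\nabla v)\,v\,\dint x$. The first step is integration by parts: writing $(b_k\cdot\nabla v)v=\tfrac12\DIV(b_k v^2)-\tfrac12\DIV(b_k)v^2$ and using $b_k\in W^{1,\infty}(\mathcal D;\R^d)$, $v^2\in W^{1,1}(\mathcal D)$ and that $\mathcal D$ is a bounded $C^1$-domain, the divergence theorem gives
\[
(B(t,v)^*v)_k=\tfrac12\int_{\partial\mathcal D}(b_k\cdot n)(\tr v)^2\,\dint\sigma-\tfrac12\int_{\mathcal D}\DIV(b_k)\,v^2\,\dint x .
\]
Taking $\ell^2$-norms in $k$, using the triangle inequality and then Minkowski's integral inequality to pull $\|\cdot\|_{\ell^2}$ inside the integrals, together with $\|b\cdot n\|_{\ell^2}\le C_b$ on $\partial\mathcal D$ and $\|\DIV b\|_{\ell^2}\le D_b$ on $\mathcal D$, one gets $\|B(t,v)^*v\|_{\ell^2}\le\tfrac12 C_b\,\|\tr v\|_{L^2(\partial\mathcal D)}^2+\tfrac12 D_b\,\|v\|_{L^2(\mathcal D)}^2$.

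\emph{Key input.} The analytic heart is a sharp trace interpolation inequality: for every $\delta>0$ there is $C_\delta$ such that
\[
\|\tr v\|_{L^2(\partial\mathcal D)}^2\le(2+\delta)\,\|v\|_{L^2(\mathcal D)}\|\nabla v\|_{L^2(\mathcal D)}+C_\delta\|v\|_{L^2(\mathcal D)}^2,\qquad v\in H^1(\mathcal D).
\]
I would prove it by applying the divergence theorem to $v^2F$, where $F$ is a Lipschitz vector field on $\overline{\mathcal D}$ with $F\cdot n\ge 1-\delta'$ on $\partial\mathcal D$ and $\|F\|_{L^\infty(\mathcal D)}\le 1+\delta'$; such $F$ is built by mollifying a continuous extension of the outward unit normal and multiplying by a collar cutoff, and this is the single point where the $C^1$-regularity of $\partial\mathcal D$ is used. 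Then
\[
(1-\delta')\|\tr v\|_{L^2(\partial\mathcal D)}^2\le\int_{\mathcal D}\big(2v\,\nabla v\cdot F+v^2\DIV F\big)\,\dint x\le 2(1+\delta')\|v\|_{L^2(\mathcal D)}\|\nabla v\|_{L^2(\mathcal D)}+\|\DIV F\|_\infty\|v\|_{L^2(\mathcal D)}^2 ,
\]
and the claim follows on dividing by $1-\delta'$ and taking $\delta'$ small.

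\emph{Conclusion and main obstacle.} Inserting the trace inequality into the bound for $\|B(t,v)^*v\|_{\ell^2}$, squaring via $(a+b)^2\le(1+\theta)a^2+(1+\theta^{-1})b^2$, absorbing the cross term $\|v\|_{L^2(\mathcal D)}^3\|\nabla v\|_{L^2(\mathcal D)}$ by Young's inequality, and dividing by $\|v\|_{L^2(\mathcal D)}^2$, the coefficient multiplying $C_b^2\|\nabla v\|_{L^2(\mathcal D)}^2$ equals $(1+\theta)\tfrac14\big((2+\delta)^2+\eta\big)$, which tends to $1$ as $\theta,\delta,\eta\to0$; choosing these parameters small in terms of the prescribed $\varepsilon$ makes it at most $1+\varepsilon$, while every remaining term is of the form $C_\varepsilon(C_b^2+D_b^2)\|v\|_{L^2(\mathcal D)}^2$. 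This yields the asserted estimate. The main obstacle is obtaining the constant $2$ (up to $\delta$) in the trace inequality: a cruder bound $\|\tr v\|_{L^2(\partial\mathcal D)}^2\le\varepsilon'\|\nabla v\|_{L^2(\mathcal D)}^2+C_{\varepsilon'}\|v\|_{L^2(\mathcal D)}^2$ is useless here, because the squaring in the last step would then leave the term $\|\nabla v\|_{L^2(\mathcal D)}^4/\|v\|_{L^2(\mathcal D)}^2$, which is not controlled by $\|\nabla v\|_{L^2(\mathcal D)}^2$ and $\|v\|_{L^2(\mathcal D)}^2$. The remaining steps are routine.
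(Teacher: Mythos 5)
Your proof is correct and follows essentially the same route as the paper: integrate by parts to isolate a boundary term and a $\DIV(b)$ term, control the boundary term by the sharp trace interpolation inequality $\|\Tr(v^2)\|_{L^1(\partial\mathcal D)}\le (1+\varepsilon)\|\nabla(v^2)\|_{L^1(\mathcal D)}+C_\varepsilon\|v^2\|_{L^1(\mathcal D)}$ (your $(2+\delta)$-form is the same statement after Cauchy--Schwarz), and square with $(x+y)^2\le(1+\varepsilon)x^2+C_\varepsilon y^2$. The only difference is that the paper cites this trace inequality from Motron's Theorem~2.7 while you prove it directly via the divergence theorem applied to $v^2F$ for an approximate normal field $F$; your observation that the sharp constant is indispensable (a crude $\varepsilon'\|\nabla v\|^2+C_{\varepsilon'}\|v\|^2$ trace bound would leave an uncontrollable $\|\nabla v\|^4/\|v\|^2$ after squaring) is exactly the point of the argument.
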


\begin{proof}
    Observe that $\Tr(\phi u) = \phi \Tr(u)$ for $\phi \in C^1(\overline{\mathcal{D}})$ and $u\in W^{1,1}(\mathcal{D})$. Indeed, for $u\in C^1(\overline{\mathcal{D}})$ this is clear, and the general case follows by approximation and boundedness of $\Tr:W^{1,1}(\mathcal{D})\to L^1(\partial D)$.
    Thus, by integration by parts
    \begin{align*}
    \int_{\mathcal{D}} b^{ik} (\partial_i v)  v dx = \frac12 \int_{\mathcal{D}} b^{ik} (\partial_i v^2)  dx = \frac12 \int_{\partial\mathcal{D}}  b^{ik}  \Tr(v^2) n_i  dS + \frac12 \int_{\mathcal{D}} (\partial_i b^{ik})  v^2  dx,
    \end{align*}
    where $n$ denotes the outer normal of $\mathcal{D}$.
    Taking sums over $i$ and $\ell^2$-norms in $k$ for the last term we can write
    \begin{align*}
    \Big\|k\mapsto \sum_{i=1}^d \int_{\mathcal{D}} (\partial_i b^{ik})  v^2  dx\Big\|_{\ell^2} \leq \int_{\mathcal{D}} \|{\rm div}(b)\|_{\ell^2}  v^2  dx\leq D_b^2 \|v\|_{L^2(\mathcal{D})}^2.
    \end{align*}
    For the boundary term we obtain
    \begin{align*}
    \Big\|k\mapsto \sum_{i=1}^d \int_{\partial\mathcal{D}}  b^{ik}  \Tr(v^2) n_i  dS\Big\|_{\ell^2} \leq  \int_{\partial\mathcal{D}}  \|b\cdot n\|_{\ell^2}  \Tr(v^2)  dS
    \leq C_b \|\Tr(v^2)\|_{L^1(\partial \mathcal{D})}
    \end{align*}
    By \cite[Theorem 2.7]{Motron02} for every $\varepsilon\in (0,1)$ there exists a constant $C_{\varepsilon}>0$ such that
    \begin{align*}
    \|\Tr(v^2)\|_{L^1(\partial D)} &\leq (1+\varepsilon) \|\nabla (v^2)\|_{L^1(\mathcal{D})} + C_{\varepsilon} \|v^2\|_{L^1(\mathcal{D})}
    \\ & \leq 2(1+\varepsilon) \|v \nabla v\|_{L^1(\mathcal{D})} + C_{\varepsilon} \|v\|_{L^2(\mathcal{D})}^2
    \\ & \leq 2(1+\varepsilon) \|\nabla v\|_{L^2(\mathcal{D})}\|v\|_{L^2(\mathcal{D})} + C_{\varepsilon} \|v\|_{L^2(\mathcal{D})}^2,
    \end{align*}
    Therefore, for $v\neq 0$
    \begin{align*}
    \frac{\Big\|k\mapsto\sum_{i=1}^d \int_{\mathcal{D}} b^{ik} (\partial_i v)  v dx\Big\|_{\ell^2}}{\|v\|_{L^2(\mathcal{D})}} &\leq (1+\varepsilon) C_{b} \|\nabla v\|_{L^2(\mathcal{D})} + (C_{\varepsilon} C_{b}+D_b) \|v\|_{L^2(\mathcal{D})}.
    \end{align*}
    Taking squares we obtain the desired estimate by using $(x+y)^2\leq (1+\varepsilon) x^2+ C_{\varepsilon}'y^2$, and by redefining $\varepsilon$.
    \end{proof}

\begin{proof}[Proof of Proposition~\ref{prop:Neumann}]
    We show that Assumptions ~\ref{ass: stoch_heat_eq_neumann} \condref{it:hem}-\condref{it:bound2} hold, where we set $\phi = 0, \psi = 0$. Application of Theorem~\ref{Main_theorem} gives the result. We see that \condref{it:hem}, \condref{it:bound1} and \condref{it:bound2} are similar to the proof of Proposition~\ref{prop: stoch_heat_equation_dir}. To prove \condref{it:weak_mon}, we require an extra step in inequality \eqref{eq: stoch_heat_equation_dir_H2}. Note that the same sequence of inequalities hold, since we only use the uniform ellipticity condition. This condition also follows from the new uniform ellipticity condition in Assumptions ~\ref{ass: stoch_heat_eq_neumann}. By linearity of the operators, it suffices to consider $v \in H^1(\mathcal{D})$. Using inequality \eqref{eq: stoch_heat_equation_dir_H2}, this results in:
    \begin{align*}
   2\langle A(t, v), v\rangle + \sum\limits_{k=1}^\infty \Big\|\sum\limits_{i=1}^d b_k^{i}\partial_i v\Big\|_{L^2(\mathcal{D})}^2
            &\leq -\theta\sum\limits_{i=1}^d \int_\mathcal{D} |\partial_i v|^2 \dint x \\ & \leq -\theta\|v\|_{H^1(\mathcal{D})}^2 + \theta \|v\|_{L^2(\mathcal{D})}^2.
     \end{align*}
    We are left to prove \condref{it:coerc}. It only remains to inspect the term $\|B_t(v)^*v\|/\|v\|^2$ where $v\in H^1(\mathcal{D})$. Let $\varepsilon \in (0, 1)$ and invoke lemma \eqref{lemma: neumann_coercivity} to produce the  bound
    \begin{equation*}
        \begin{split}
            &2\langle A(t, v), v\rangle + \sum\limits_{k=1}^\infty \Big\|\sum\limits_{i=1}^d b_k^{i}\partial_i v\Big\|_{L^2(\mathcal{D})}^2 + (p-2)\frac{\|B_t(v)^*v\|_{\ell^2}^2}{\|v\|_{L^2(\D)}^2}\\
            &\leq (-\theta + (p-2)\varepsilon C_b^2)\|\nabla v\|_{L^2(\D)}^2 + C_\varepsilon(C_b^2+D_b^2) \|v\|_{L^2(\D)}^2\\
            &\leq (-\theta + (p-2)\varepsilon C_b^2)\|v\|_{H^1(\D)}^2 + (-(1+\varepsilon)C_b^2 + \theta + C_\varepsilon(C_b^2+D_b^2)) \|v\|_{L^2(\D)}^2,
        \end{split}
    \end{equation*}
    where $v \neq 0$. Since $C_b\in L^\infty(\Omega)$, we can choose $\varepsilon>0$ such that $(p-2)\varepsilon C_b^2)\leq \theta/2$, and this gives \condref{it:coerc}. Applying Theorem~\ref{Main_theorem} the required statement follows.
\end{proof}

\subsection{Stochastic Burgers' equation with Dirichlet boundary conditions}\label{stoch_burg}
We consider Burgers' equation with multiplicative gradient noise, that is,
\begin{equation}\label{eq:Burgers_equation}
    \dint u(t) = \left(\partial^2 u(t) + u(t) \partial u(t) \right) \dint t + \gamma \partial u(t) \dint W(t), \quad x \in (0,1),
\end{equation}
where $W(t)$ is a real-valued Wiener process. Equation~\eqref{eq:Burgers_equation} was first studied in \cite{brzezniak_1991} and subsequently in \cite{DaPrato_1994} with space-times white noise. We consider the same setting treated in \cite[Example 6.3]{neelima_2020} of a one-dimensional gradient noise term. The novelty is that our main abstract theorem allows to treat arbitrary moments in $\Omega$ using the classical parabolicity condition.
\begin{assumptions}\label{ass:burgers_assumptions}
    Let $\gamma \in (-\sqrt 2, \sqrt 2)$, $T > 0$, and
    \[
        (V, H, V^*) = (H_0^1(0, 1), L^2(0, 1), H^{-1}(0, 1))
    \]
    and take $U = \R$.
\end{assumptions}
Now \eqref{eq:Burgers_equation} can be reformulated as a stochastic evolution equation
\begin{equation*}
    \dint u(t) = A(u(t)) \dint t + B(u(t)) \dint W(t),
\end{equation*}
where $A:H_0^1(\Distr) \to H^{-1}(\Distr)$ is given by
\begin{equation}\label{A_burgers}
    \langle A(u), v\rangle = -\int_0^1 \partial u \, \partial v \, \dint x + \int_0^1 u \, \partial u \, v \, \dint x \qquad \text{for } u, v \in H_0^1(0, 1),
\end{equation}
and $B \colon H_0^1(0, 1) \to L^2(0, 1)$ is defined by
\begin{equation}\label{B_burgers}
    B(v) = \gamma \partial v \qquad \text{for } v \in H_0^1(0, 1).
\end{equation}
Note that in order to align with our abstract framework, we would have to take $B \colon H_0^1(0, 1) \to \mathcal L_2(\R,L^2(0, 1))$ but we do not distinguish between the through the multiplication operation trivially isomorphic spaces $\mathcal L_2(\R,L^2(0, 1))$ and $L^2(0, 1)$.

Since we can allow $p=\infty$ in the above, it will turn out that we are able to obtain uniform estimates in $\Omega$ for this particular example. This is in correspondence with what has been shown in \cite{brzezniak_1991}, albeit obtained in a different way.

\begin{proposition}
    Suppose that Assumptions~\ref{ass:burgers_assumptions} are satisfied. Let $p\in [4, \infty]$. Then, for any $u_0 \in L^{p}(\Omega; L^2(0, 1))$ the equation \eqref{eq:Burgers_equation} has a unique solution $u$, and  the following energy estimate holds
    \begin{equation*}
        \|u(t)\|_{L^p(\Omega;C([0,T];L^2(0,1)))} + \|u\|_{L^p(\Omega;L^2(0,T;H^1_0(0,1)))} \leq C\|u_0\|_{L^p(\Omega; L^2(0,1))},
    \end{equation*}
        where $C$ only depends on $\gamma$.
\end{proposition}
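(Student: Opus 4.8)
The plan is to verify that the pair $(A,B)$ from \eqref{A_burgers}--\eqref{B_burgers} satisfies Assumptions~\ref{main_assumptions} with $\alpha = 2$, $\beta = 2$, $f = 0$, and then to invoke Theorem~\ref{Main_theorem} (or rather Corollary~\ref{cor:a_priori_remark}, since we will see $K_B = K_c = 0$ and $B(v)^*v = 0$). The hemicontinuity \condref{it:hem} is immediate since $\lambda \mapsto \langle A(u + \lambda v), w\rangle$ is a quadratic polynomial in $\lambda$. For boundedness \condref{it:bound1}, one estimates $|\langle A(u), v\rangle| \le \|\partial u\|_{L^2}\|\partial v\|_{L^2} + \|u\|_{L^4}\|\partial u\|_{L^2}\|v\|_{L^4}$ and uses the one-dimensional Gagliardo--Nirenberg inequality $\|u\|_{L^4(0,1)} \lesssim \|u\|_{L^2}^{3/4}\|u\|_{H^1}^{1/4}$ (plus $\|u\|_{L^4} \lesssim \|u\|_{H^1}$) to get $\|A(u)\|_{H^{-1}}^2 \lesssim (1 + \|u\|_{H^1}^2)(1 + \|u\|_{L^2}^2)$, matching $\alpha = 2$, $\beta = 2$. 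Boundedness \condref{it:bound2} is trivial: $\|B(v)\|^2 = \gamma^2\|\partial v\|_{L^2}^2 \le \gamma^2 \|v\|_{H^1}^2$, so $K_\alpha = \gamma^2$, $K_B = 0$.

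The two substantive points are local weak monotonicity \condref{it:weak_mon} and coercivity \condref{it:coerc}. For \condref{it:coerc}, the crucial observation is that $B(v)^*v = \gamma\int_0^1 (\partial v)\, v\,\dint x = \tfrac{\gamma}{2}\int_0^1 \partial(v^2)\,\dint x = 0$ for $v \in H_0^1(0,1)$ by the Dirichlet boundary condition, so the $p$-dependent term in \condref{it:coerc} vanishes identically. Moreover $\langle A(v), v\rangle = -\|\partial v\|_{L^2}^2 + \int_0^1 v^2 \partial v\, \dint x = -\|\partial v\|_{L^2}^2$ (again the cubic term integrates to zero because $\int_0^1 \partial(v^3)\,\dint x = 0$). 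Hence $2\langle A(v), v\rangle + \|B(v)\|^2 = -2\|\partial v\|_{L^2}^2 + \gamma^2\|\partial v\|_{L^2}^2 = -(2-\gamma^2)\|v\|_{H^1_0}^2$, using the Poincar\'e equivalence $\|v\|_{H^1_0} = \|\partial v\|_{L^2}$ on $(0,1)$. Since $\gamma^2 < 2$, we get \condref{it:coerc} with $\theta = 2-\gamma^2 > 0$, $f = 0$, $K_c = 0$, for \emph{all} $p \in [2,\infty)$ — and $B(v)^*v = 0$ lets us pass to $p = \infty$ via Corollary~\ref{cor:a_priori_remark}.

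The main obstacle is \condref{it:weak_mon}, which requires controlling the nonlinear drift difference. Writing $N(u) = u\partial u = \tfrac12\partial(u^2)$, one has $\langle N(u) - N(v), u - v\rangle = \tfrac12\int_0^1 \partial(u^2 - v^2)(u-v)\,\dint x = -\tfrac12\int_0^1 (u+v)(u-v)\partial(u-v)\,\dint x$ after integration by parts (boundary terms vanish). This is the standard Burgers estimate: bound it by $\tfrac12\|u+v\|_{L^\infty}\|u-v\|_{L^2}\|\partial(u-v)\|_{L^2}$, or better, isolate the $v$-dependence. One rewrites $-\tfrac12\int (u+v)(u-v)\partial(u-v) = -\tfrac14\int(u+v)\partial((u-v)^2) = \tfrac14\int \partial(u+v)(u-v)^2 = \tfrac14\int\partial(u-v)(u-v)^2 + \tfrac12\int (\partial v)(u-v)^2$; the first term is $\tfrac{1}{12}\int\partial((u-v)^3) = 0$, leaving $\tfrac12\int (\partial v)(u-v)^2 \le \tfrac12\|\partial v\|_{L^2}\|u-v\|_{L^4}^2 \le C\|\partial v\|_{L^2}\|u-v\|_{L^2}^{3/2}\|u-v\|_{H^1}^{1/2}$ by Gagliardo--Nirenberg. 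Combining with the negative term $-2\|\partial(u-v)\|_{L^2}^2 + \gamma^2\|\partial(u-v)\|_{L^2}^2$ from the linear and noise parts and applying Young's inequality to absorb the $\|u-v\|_{H^1}^{1/2}$ factor into the negative $\|u-v\|_{H^1}^2$ term (the remaining power of $\|\partial v\|_{L^2}$ being $\tfrac{4}{3}$, which is $\le C(1 + \|v\|_{H^1}^2)$), one arrives at $2\langle A(u)-A(v),u-v\rangle + \|B(u)-B(v)\|^2 \le K(1 + \|v\|_{H^1}^2)\|u-v\|_{L^2}^2$, i.e.\ \condref{it:weak_mon} with $\alpha = 2$, $\beta = 0$. (Note $p \ge 4$ in the statement accommodates $p \ge \beta + 2$ only if one uses $\beta = 2$ in \condref{it:bound1}; with $\beta = 0$ in \condref{it:weak_mon} the assumptions are consistent as stated, the larger $\beta$ entering only the boundedness bound.) Once all of \condref{it:hem}--\condref{it:bound2} are checked, existence, uniqueness, and the stated energy estimate follow directly from Theorem~\ref{Main_theorem} and Corollary~\ref{cor:a_priori_remark}, with the constant depending only on $\gamma$ through $\theta = 2 - \gamma^2$ and $K_\alpha = \gamma^2$.
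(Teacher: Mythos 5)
Your overall strategy coincides with the paper's: verify \condref{it:hem}--\condref{it:bound2} for the Burgers pair $(A,B)$ with $\alpha=2$, $f=0$, $K_B=K_c=0$, observe that $B(v)^*v=0$ so that \condref{it:coerc} holds with $\theta=2-\gamma^2$ for every $p$, and conclude via Theorem~\ref{Main_theorem} and Corollary~\ref{cor:a_priori_remark}. Your treatment of \condref{it:weak_mon} is in fact a nice variant: by integrating by parts a second time you reduce the nonlinear contribution to $\tfrac12\int_0^1(\partial v)(u-v)^2\,\dint x$ and obtain \condref{it:weak_mon} with $\beta=0$, whereas the paper stops at $\int_0^1 v\,(u-v)\,\partial(u-v)\,\dint x$ and gets $\beta=2$ there; since \condref{it:bound1} forces $\beta=2$ anyway, both routes lead to the same restriction $p\geq 4$.

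However, your verification of \condref{it:bound1} does not work as written. From $|\int_0^1 u\,\partial u\,v\,\dint x|\le\|u\|_{L^4}\|\partial u\|_{L^2}\|v\|_{L^4}$ together with $\|v\|_{L^4}\lesssim\|v\|_{H^1}$ and $\|u\|_{L^4}\lesssim\|u\|_{L^2}^{3/4}\|u\|_{H^1}^{1/4}$ you get $\|A(u)\|_{H^{-1}}\lesssim\|u\|_{H_0^1}+\|u\|_{L^2}^{3/4}\|u\|_{H_0^1}^{5/4}$, so that $\|A(u)\|_{H^{-1}}^2$ contains the term $\|u\|_{L^2}^{3/2}\|u\|_{H_0^1}^{5/2}$. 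This is \emph{not} of the form $K_A\|u\|_V^{2}\,(1+\|u\|_{L^2}^\beta)$ for any $\beta$ (fix $\|u\|_{L^2}$ and let $\|u\|_{H_0^1}\to\infty$): the growth in the $V$-norm exceeds the quadratic rate that \condref{it:bound1} with $\alpha=2$ permits, and your claimed conclusion $\|A(u)\|_{H^{-1}}^2\lesssim(1+\|u\|_{H^1}^2)(1+\|u\|_{L^2}^2)$ does not follow from the stated chain. The repair is the same integration by parts you already use elsewhere: write $\int_0^1 u\,\partial u\,v\,\dint x=-\tfrac12\int_0^1 u^2\,\partial v\,\dint x$, bound this by $\tfrac12\|u\|_{L^4}^2\|v\|_{H_0^1}\lesssim\|u\|_{L^2}^{3/2}\|u\|_{H_0^1}^{1/2}\|v\|_{H_0^1}\lesssim\|u\|_{L^2}\|u\|_{H_0^1}\|v\|_{H_0^1}$ (using Poincar\'e in the last step), which yields $\|A(u)\|_{H^{-1}}^2\le C\|u\|_{H_0^1}^2(1+\|u\|_{L^2}^2)$, i.e.\ \condref{it:bound1} with $\beta=2$ and $f=0$. (Note also that a bound with a leading factor $(1+\|u\|_{H^1}^2)$ would not fit \condref{it:bound1} with $f=0$ in any case, since the hypothesis requires the first factor to vanish as $\|u\|_V\to0$.) With this step corrected, the rest of your argument is sound and delivers the proposition exactly as in the paper.
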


\begin{proof}
    As in previous instances, it suffices to verify Assumptions ~\ref{main_assumptions}, \condref{it:hem}-\condref{it:bound2}, with $f = 0$ and $K_B = K_c = 0$, so that the proposition follows from Theorem~\ref{Main_theorem} and Corollary~\ref{cor:a_priori_remark}. Hemicontinuity \condref{it:hem} is obvious. In order to prove local weak monotonicity \condref{it:weak_mon}, note that for $u, v \in H_0^1(0, 1)$ we have
    \begin{align*}
        \langle A(u)-A(v), u-v\rangle & \stackrel{\eqref{A_burgers}}{=} -\int_0^1 \partial(u-v) \, \partial(u-v) \, \dint x + \int_0^1 (u \, \partial u - v \, \partial v) \, (u-v) \, \dint x \\
                                      & \, = - \|u-v\|_{H_0^1(0, 1)}^2 - \frac 1 2 \int_0^1 (u-v) \, \partial (u^2-v^2) \, \dint x.
    \end{align*}
    Integration by parts then entails
    \begin{align*}
        - \frac 1 2 \int_0^1 (u-v) \, \partial (u^2-v^2) \, \dint x & = \frac{1}{2}\int_0^1 (u^2-v^2) \, \partial(u-v) \, \dint x                                         \\                                                                     & = \frac{1}{6} \int_0^1 \partial(u-v)^3 \, \dint x + \int_0^1 v \, (u-v) \, \partial(u-v) \, \dint x \\
        & = \int_0^1 v (u-v) \, \partial(u-v) \, \dint x,
    \end{align*}
    so that
    \begin{align*}
        \langle A(u)-A(v), u-v\rangle & = -\|u-v\|_{H_0^1(0, 1)}^2 - \int_0^1 v (u-v) \, \partial(u-v) \, \dint x                    \\
                                      & \leq -\|u-v\|_{H_0^1(0, 1)}^2 + \|v\|_{L^4 (0, 1)} \|u-v\|_{L^4(0,1)} \|u-v\|_{H_0^1(0, 1)}.
    \end{align*}
    We employ the Sobolev-Gagliardo-Nirenberg and Poincar\'{e} inequality to obtain
    \begin{equation*}
        \|v\|_{L^4(0, 1)} \leq C \|v\|_{L^2(0, 1)}^{\frac{3}{4}} \|v\|_{H_0^1(0, 1)}^{\frac{1}{4}}\leq C'\|v\|_{L^2(0, 1)}^{\frac{1}{2}} \|v\|_{H_0^1(0, 1)}^{\frac{1}{2}},
    \end{equation*}
    so that
    \begin{align*}
        \langle A(u)-A(v), u-v\rangle
         & \leq -\|u-v\|_{H_0^1(0, 1)}^2 + \|v\|_{L^4(0, 1)} \|u-v\|_{L^2(0, 1)}^{\frac{1}{2}}\|u-v\|_{H_0^1(0, 1)}^{\frac{3}{2}}                                  \\
         & \stackrel{\mathrm{(i)}}{\leq} (\varepsilon-1)\|u-v\|_{H_0^1(0, 1)}^2 + C_{\varepsilon} \|v\|_{L^4(0, 1)}^4 \|u-v\|_{L^2(0, 1)}^2                        \\
         & \stackrel{\mathrm{(ii)}}{\leq} (\varepsilon-1)\|u-v\|_{H_0^1(0, 1)}^2 + C_{\varepsilon}\|v\|_{L^2(0, 1)}^2 \|v\|_{H_0^1(0, 1)}^2 \|u-v\|^2_{L^2(0, 1)},
    \end{align*}
    where (i) follows from Young's inequality for some $\varepsilon \in (0, 1)$ and (ii) is a consequence of the Sobolev-Gagliardo-Nirenberg inequality. Now we combine with \eqref{B_burgers} to get
    \begin{equation*}
        \begin{split}
            &2\langle A(u)-A(v), u-v\rangle + \|B(u)-B(v)\|_{L^2(0, 1)}^2 \\ & \leq (\gamma^2 + 2\varepsilon-2)\|u-v\|_{H_0^1(0, 1)}^2
             + C_\varepsilon\big(1+\|v\|_{L^2(0, 1)}^2\big)\big(1+\|v\|_{H_0^1(0, 1)}^2\big)\|u-v\|_{L^2(0, 1)}^2,
        \end{split}
    \end{equation*}
    where $u, v\in H_0^1(0, 1)$. Noting that $\gamma \in (-\sqrt 2, \sqrt 2)$ and taking $\varepsilon = \frac{2-\gamma^2}{2}$, \condref{it:weak_mon} holds with $K = C_\varepsilon$, $\alpha = 2$, and $\beta = 2$.

    For coercivity \condref{it:coerc}, we first inspect the quantity $\frac{\|B(v)^*v\|_U^2}{\|v\|_H^2}$ with $v\in H_0^1(0, 1)$ and $v\neq 0$. Now, note that the following holds by using integration by parts
    \begin{equation*}
        \|B(v)^*v\|_U^2 = \gamma \int_0^1 v \, \partial v \, \dint x = \frac{\gamma}{2} \int_0^1 \partial (v^2) \, \dint x = 0.
    \end{equation*}
    By \eqref{A_burgers} and \eqref{B_burgers},  this leads to
    \begin{equation*}
        \begin{split}
            2\langle A(v), v\rangle + \|B(v)\|_{L^2(0,1)}^2 + & (p-2)\frac{\|B(v)^*v\|_U^2}{\|v\|_H^2} \\ & =  -2\|v\|_{H_0^1(0, 1)}^2 + \int_0^1 v^2 \, \partial v \, \dint x + \gamma^2\|v\|_{H_0^1(0, 1)}^2.\\
        \end{split}
    \end{equation*}
    Since $\int_0^1 v^2 \, \partial v \, \dint x = \frac 1 3 \int_0^1 \partial (v^3) \, \dint x = 0$,
    we get
    \begin{equation*}
        2\langle A(v), v\rangle + \|B(v)\|_{L^2(0, 1)}^2 + (p-2)\frac{\|B(v)^*v\|_U^2}{\|v\|_H^2} = (-2+\gamma^2)\|v\|_{H_0^1(0, 1)}^2.
    \end{equation*}
    Therefore, \condref{it:coerc} holds with $\theta = 2 - \gamma^2 > 0$, $\alpha = 2$, $f = 0$, and $K_c = 0$

    Let $u, v\in H_0^1(0, 1)$. For the boundedness condition \condref{it:bound1}, observe
    \begin{equation*}
        |\langle A(u), v \rangle| \leq \int_0^1 |\partial u| \, |\partial v| \, \dint x + \Big|\int_0^1 u \, \partial u \, v \, \dint x\Big|,
    \end{equation*}
    where
    \begin{equation*}
        \int_0^1 |\partial u| \, |\partial v| \, \dint x \leq \|u\|_{H_0^1(0, 1)} \|v\|_{H_0^1(0, 1)}
    \end{equation*}
    by the Cauchy-Schwarz inequality and
    \begin{equation*}
        \begin{split}
            \Big|\int_0^1 u \, \partial u \, v \, \dint x \Big| & \ \stackrel{\mathrm{(i)}}{=} \Big|\int_0^1 \frac{1}{2} (u^2) \, \partial v \, \dint x\Big| \stackrel{\mathrm{(ii)}}{\le} \frac{1}{2} \|u\|_{L^4(0, 1)}^2 \|v\|_{H_0^1(0, 1)} \\
            &\stackrel{\mathrm{(iii)}}{\le} C \|u\|_{L^2(0, 1)}\|u\|_{H_0^1(0, 1)}\|v\|_{H_0^1(0, 1)},
        \end{split}
    \end{equation*}
    where we have applied integration by parts in (i), H\"{o}lder's inequality in (ii), and the Sobolev-Gagliardo-Nirenberg inequality in (iii). This results in
    \begin{equation*}
        \left|\langle A(u), v\rangle\right| \leq \big(\|u\|_{H_0^1(0, 1)} + C\|u\|_{L^2(0, 1)}\|u\|_{H_0^1(0, 1)}\big)\|v\|_{H_0^1(0, 1)},
    \end{equation*}
    Using $\alpha = 2$ as in \condref{it:weak_mon} and \condref{it:coerc}, we obtain
    \begin{equation*}
        \|A(u)\|_{H^{-1}(0, 1)}^2 \leq C' \|u\|_{H_0^1(0, 1)}^2 \big(1 + \|u\|_{L^2(0, 1)}^2\big),\\
    \end{equation*}
    proving \condref{it:bound1} with $K_A = C'$ and $\beta = 2$. Finally, for $v \in H_0^1(0, 1)$, $\|B(v)\|_{L^2(0, 1)}^2 = \gamma^2 \|v\|_{H_0^1(0, 1)}^2$, so that \condref{it:bound2} is satisfied with $K_B = 0$ and $K_\alpha = \gamma^2$.
\end{proof}

\subsection{Stochastic Navier-Stokes equations in 2D}\label{stoch_nav_sto}
Consider the stochastic Navier-Stokes equations in two space dimensions with multiplicative gradient noise
\begin{equation}\label{stoch_navier}
    \dint u(t) = (\nu \Delta u(t) - (u, \nabla )u)\dint t + \sum\limits_{k=1}^\infty [(b_k, \nabla)u] \dint W_k(t) - (\nabla p) \dint t.
\end{equation}
Here, $(W_k(t))_{t\geq 0}$ is a collection of independent real Wiener processes indexed by $k\in\mathbb{N}$. The components $b_k$ are set to be vectors of divergence free vector fields (see Assumptions~\ref{navier_stokes_assumptions} below). Equation~\eqref{stoch_navier} was considered in \cite{brzezniak_1991} using semigroup methods, and later on in many other papers (see \cite{AV20_NS} and references therein). For simplicity we do not consider additional forcing terms, but they can be included without difficulty (see Remark \ref{rem:additive}).

In what follows, we use:
\begin{assumptions}\label{navier_stokes_assumptions}
    Suppose $\mathcal{D} \subseteq \mathbb{R}^2$ is a bounded domain. Furthermore, assume $\nu > 0$, $T > 0$,
    $(b_k)_{k \in \N} \in L^\infty((0,T)\times \Omega\times \mathcal D;\ell^2(\N;\R^{2\times2}))$ which is progressively measurable, and satisfies $\DIV b_k = (\sum_{i = 1}^2 \partial_i b_k^{i\gamma})_{\gamma = 1}^2 = 0$ in the sense of distributions for all $k \in \N$. We impose the coercivity condition that there exists $\kappa > 0$ such that
    \begin{equation}\label{coercivity_assumption_navier}
        \Big(2\nu \sum\limits_{i, \gamma = 1}^2 (\xi^{i, \gamma})^2  - \sum\limits_{k=1}^\infty\sum\limits_{\gamma, \gamma' = 1}^2 \sum\limits_{i, j = 1}^2 b_k^{i\gamma} b_k^{j\gamma'} \xi^{i, \gamma} \xi^{j, \gamma'} \Big) \geq \kappa \sum\limits_{i, \gamma = 1}^2 (\xi^{i, \gamma})^2,
    \end{equation}
    for all $\xi \in \mathbb{R}^{2\times 2}$.
    Set $U := \ell^2$ and define $(V,H,V^*)$ by
    \begin{equation*}
        V = \{v \in W_0^{1, 2}(\mathcal{D}; \mathbb{R}^2) : \nabla \cdot v = 0 \quad a.e. \text{ on } \mathcal{D}\}, \quad \|v\|_V := \Big(\int_{\mathcal{D}}|\nabla v|^2 \dint x\Big)^{\frac{1}{2}},
    \end{equation*}
    and where $H$ is the closure of $V$ with respect to the norm
    \begin{equation*}
        \|v\|_H := \Big(\int_{\mathcal{D}}|v|^2 \dint x\Big)^{\frac{1}{2}}.
    \end{equation*}
\end{assumptions}
Defining the Helmholtz-Leray projection $\mathbb{P}_\mathrm{HL}$ as the orthogonal projection
\begin{equation*}
    \mathbb{P}_\mathrm{HL}: L^2(\mathcal{D}; \mathbb{R}^2) \to H,
\end{equation*}
equation~\eqref{stoch_navier} turns into a stochastic evolution equation
\begin{equation}\label{reduced_navier}
    \dint u(t) = (Lu(t)+ F(u(t)))\dint t + \sum\limits_{k=1}^\infty B_k(u(t)) \dint W_k(t),
\end{equation}
where $L: H^{2, 2}(\Distr; \R^2) \cap V \to H$ is given by
$$Lu = \nu \mathbb{P}_\mathrm{HL} (\Delta u), \quad u \in H^{2, 2}(\Distr; \R^2) \cap V$$
and can be extended to a map $L: V\to V^*$ such that $\|Lu\|_{V^*} \leq \|u\|_V, u \in V$. Furthermore, set $F$ to be a nonlinear operator $F: V \to V^*$ given by
$$F(u) = -\mathbb{P}_\mathrm{HL}[(u, \nabla)u] = - \mathbb P_\mathrm{HL}[\DIV (u \otimes u)], \quad u \in V.$$
Finally, define $B \colon V \to \mathcal L_2(U,H)$ by
\[
    B(u)e_k = B_k(u) = \mathbb{P}_\mathrm{HL}[(b_k, \nabla) u], \quad u \in V.
\]
\begin{theorem}\label{thm:SNS}
    Suppose Assumption~\ref{navier_stokes_assumptions} holds and let $p \in[2, \infty]$. Then, for any $u_0 \in L^{p}(\Omega; H)$, there exists a unique solution $u$ to equation \eqref{reduced_navier} and there exists a constant $C$ only depending on $\kappa$ such that
    \begin{equation*}
        \|u\|_{L^p(\Omega;C([0,T];H))} + \|u\|_{L^p(\Omega;L^2(0,T;V))} \leq C\|u_0\|_{L^p(\Omega; H)}
    \end{equation*}
\end{theorem}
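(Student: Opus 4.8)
The plan is to cast \eqref{reduced_navier} in the abstract form \eqref{eq:variationalintro} with $A(t,v)=Lv+F(v)$ and $B$ as defined above, verify Assumptions~\ref{main_assumptions} with $\alpha=2$, $\beta=2$, $f=0$ and $K_B=K_c=0$, and then apply Theorem~\ref{Main_theorem} together with Corollary~\ref{cor:a_priori_remark}. The two structural facts that drive the argument are that every $v\in V$ and every $b_k$ are divergence free and that elements of $V$ have vanishing trace on $\partial\mathcal D$; these force both $\langle F(v),v\rangle=0$ (the antisymmetry of the Navier--Stokes trilinear form) and $B(v)^*v=0$. Because $B(v)^*v=0$, the $p$-dependent term in \condref{it:coerc} drops out, so the coercivity condition holds simultaneously for all $p\in[2,\infty)$; Corollary~\ref{cor:a_priori_remark} then delivers a bound whose constant depends only on $\kappa$ (not on $p$ or $T$), and letting $p\to\infty$ covers the endpoint $p=\infty$ as well.

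For the verification, \condref{it:hem} is trivial since $A$ is a fixed affine-quadratic map. For \condref{it:coerc} one computes $\langle Lv,v\rangle=-\nu\|\nabla v\|_{L^2}^2$, uses $\langle F(v),v\rangle=0$, and bounds $\|B(v)\|_{\calL_2(U,H)}^2\le\int_{\mathcal D}\sum_k|(b_k,\nabla)v|^2\dint x$ since $\mathbb{P}_{\mathrm{HL}}$ is an orthogonal projection, hence a contraction on $L^2$; the pointwise ellipticity inequality~\eqref{coercivity_assumption_navier} evaluated at $\xi=\nabla v$ then gives $2\langle A(t,v),v\rangle+\|B(v)\|_{\calL_2(U,H)}^2\le-\kappa\|\nabla v\|_{L^2}^2=-\kappa\|v\|_V^2$, so \condref{it:coerc} holds with $\theta=\kappa$. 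For \condref{it:bound1} use $\|Lv\|_{V^*}\le\|v\|_V$ and $\|F(v)\|_{V^*}\le\|v\otimes v\|_{L^2}=\|v\|_{L^4}^2\le C\|v\|_H\|v\|_V$, the last step being the two-dimensional Ladyzhenskaya inequality $\|w\|_{L^4}^2\le C\|w\|_{L^2}\|\nabla w\|_{L^2}$; squaring gives $\|A(t,v)\|_{V^*}^2\le C\|v\|_V^2(1+\|v\|_H^2)$. Finally $\|B(v)\|_{\calL_2(U,H)}^2\le C\|v\|_V^2$ gives \condref{it:bound2} with $K_B=0$.

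The only genuinely two-dimensional ingredient, and the main obstacle, is local weak monotonicity \condref{it:weak_mon}. Writing $A(t,u)-A(t,v)=L(u-v)+(F(u)-F(v))$, the linear part pairs with $\|B(u-v)\|_{\calL_2(U,H)}^2$ and by~\eqref{coercivity_assumption_navier} contributes at most $-\kappa\|\nabla(u-v)\|_{L^2}^2$. For the nonlinearity one uses the identity $\langle F(u)-F(v),u-v\rangle=-\int_{\mathcal D}[((u-v)\cdot\nabla)v]\cdot(u-v)\dint x$ (the term with $(u\cdot\nabla)(u-v)$ again vanishing by antisymmetry), estimates it by $\|u-v\|_{L^4}^2\|\nabla v\|_{L^2}$ and then, via Ladyzhenskaya in $d=2$, by $C\|u-v\|_H\|\nabla(u-v)\|_{L^2}\|v\|_V$. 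Young's inequality absorbs $\|\nabla(u-v)\|_{L^2}^2$ into the $-\kappa\|\nabla(u-v)\|_{L^2}^2$ term, leaving $\le C\|v\|_V^2\|u-v\|_H^2\le C(1+\|v\|_V^2)(1+\|v\|_H^2)\|u-v\|_H^2$, i.e.\ \condref{it:weak_mon} with $\alpha=\beta=2$. In $d=3$ one would only get $\|u-v\|_{L^4}^2\le C\|u-v\|_{L^2}^{1/2}\|\nabla(u-v)\|_{L^2}^{3/2}$, a power of $\|\nabla(u-v)\|_{L^2}$ too large to absorb, which is exactly why the result is restricted to two dimensions.

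With \condref{it:hem}--\condref{it:bound2} established, Theorem~\ref{Main_theorem} yields existence, uniqueness and the a priori estimate for $p\ge\beta+2=4$; since $K_B=K_c=0$ and $B(v)^*v=0$, Corollary~\ref{cor:a_priori_remark} improves this to a bound with constant depending only on $\kappa$, valid for all $p\in[2,\infty]$ and without the factor $p^{-1/2}$. For $p\in[2,4)$ one invokes the classical well-posedness of the two-dimensional stochastic Navier--Stokes equations (as in \cite{Rockner_SPDE_2015}) together with the a priori estimate of Theorem~\ref{a_priori_theorem}, which does not require $p\ge\beta+2$, transferring the bound via a stopping-time/localization argument. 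This gives $\|u\|_{L^p(\Omega;C([0,T];H))}+\|u\|_{L^p(\Omega;L^2(0,T;V))}\le C\|u_0\|_{L^p(\Omega;H)}$ for all $p\in[2,\infty]$ with $C$ depending only on $\kappa$.
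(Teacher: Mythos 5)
Your proposal is correct and follows essentially the same route as the paper: verify \condref{it:hem}--\condref{it:bound2} with $\alpha=\beta=2$, $f=0$, $K_B=K_c=0$, using the divergence-free structure and vanishing traces to get $\langle F(v),v\rangle=0$ and $B(v)^*v=0$, then evaluate the ellipticity condition \eqref{coercivity_assumption_navier} at $\xi=\nabla v$ and apply Theorem~\ref{Main_theorem} together with Corollary~\ref{cor:a_priori_remark}. The only differences are cosmetic (you use a different H\"older split for the local-monotonicity estimate of the nonlinearity, without moving the derivative onto $u-v$, which is equally valid in $d=2$) and that you explicitly handle the range $p\in[2,4)$ excluded by the requirement $p\ge\beta+2$, a point the paper leaves to its general remark that this restriction is needed only in the existence proof and can be removed by localization.
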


\begin{remark}
    The special case of periodic boundary conditions in case of rough initial data was recently considered in \cite{agresti2021stochastic}, where high order regularity was proved. There the monotone operator setting (in $L^2(\Omega)$) was combined with a new approach to SPDEs based on maximal regularity techniques (see \cite{AV19_QSEE_1, AV19_QSEE_2}). The main difficulty to prove high order regularity for the solution to \eqref{stoch_navier} is that the nonlinearity is {\em critical} for the space $L^2(0,T;V)$. Therefore, classical bootstrapping arguments do not give any regularity.
\end{remark}

\begin{proof}[Proof of Theorem~\ref{thm:SNS}]
    It suffices to use Theorem \ref{Main_theorem} and Corollary~\ref{cor:a_priori_remark} with $A(u) := L u + F(u)$, for which \condref{it:hem}-\condref{it:bound2} under the assumption $K_B = K_c = 0$ have to be shown. We will only show \condref{it:weak_mon}, \condref{it:coerc} and \condref{it:bound2}. For the other assumptions we refer to \cite{brzezniak_strong_2014,Rockner_SPDE_2015}. In order to show local monotonicity \condref{it:weak_mon}, let $u, v \in V$. The quantity $\langle Lu-Lv, u-v\rangle$ can be computed from the definition
    \begin{equation}\label{eq:lin_navier}
        \langle Lu-Lv, u-v\rangle = - \nu \|u-v\|_V^2.
    \end{equation}
    Next, we compute
    \[
        \langle F(u)-F(v), u-v\rangle = - \langle \DIV((u-v) \otimes v), u-v\rangle - \langle \DIV(u \otimes (u-v)), u-v\rangle,
    \]
    where
    \begin{equation}\label{eq:nse_non_0}
        \langle \DIV(u \otimes (u-v)), u-v\rangle = - \frac 1 2 \langle \nabla |u-v|^2, u \rangle = 0
    \end{equation}
    and the Sobolev-Gagliardo-Nirenberg inequality entails
    \[
        - \langle \DIV((u-v) \otimes v), u-v\rangle \le C \|u-v\|_V^{\frac 3 2} \|u-v\|_H^{\frac 1 2} \|v\|_{L^4(\mathcal{D};\mathbb{R}^2)}
    \]
    for a constant $C$, so that by Young's inequality
    \begin{equation}\label{eq:nonlin_navier}
        \langle F(u)-F(v), u-v\rangle \leq \kappa \|u-v\|_V^2+\frac{C'}{\kappa^3}\|v\|_{L^4(\mathcal{D};\mathbb{R}^2)}^4\|u-v\|_H^2
    \end{equation}
    for a constant $C'$. Finally, the contribution \condref{it:weak_mon} coming from the stochastic integral is
    \begin{equation}\label{eq:stoch_navier}
        \begin{split}
            \sum\limits_{k=1}^\infty \|B_k(u)-B_k(v)&\|^2_{H} \stackrel{(\mathrm{i})}{\leq} \sum\limits_{k=1}^\infty \|[(b_k, \nabla)(u-v)]\|^2_H\\
            &\stackrel{(\mathrm{ii})}{=} \sum\limits_{k=1}^\infty \sum\limits_{\gamma,\gamma' =1}^2\sum\limits_{i, j = 1}^2 \int_{\mathcal{D}} b_k^{i\gamma}b_k^{j\gamma'}\partial_i (u-v)^{\gamma} \partial_j (u-v)^{\gamma'} \dint x,
        \end{split}
    \end{equation}
    where (i) follows since projections are contractive and (ii) is the first line written out. By \eqref{eq:lin_navier}, \eqref{eq:nonlin_navier}, \eqref{eq:stoch_navier} and the coercivity condition \eqref{coercivity_assumption_navier} we obtain
    \begin{align*}
         & 2\langle L u + F(u) - (L v +F(v)), u-v\rangle + \sum\limits_{k=1}^\infty \|B_k(u)-B_k(v)\|_H^2 \\
         & \leq \frac{C'}{\kappa^3}\|v\|_{L^4(\mathcal{D}; \mathbb{R}^2)}^4 \|u-v\|_H^2                   \\
         & \stackrel{\mathrm{(i)}}{\leq} \frac{C''}{\kappa^3} \|v\|_V^2 \|v\|_H^2 \|u-v\|_H^2             \\
         & \leq  \frac{C''}{\kappa^3}(1+\|v\|_V^2)(1+\|v\|_H^2)\|u-v\|_H^2,
    \end{align*}
    with a constant $C''$ and (i) follows from the Sobolev-Gagliardo-Nirenberg inequality. The above implies that \condref{it:weak_mon} holds with $\alpha = \beta = 2$ and $K = \frac{C''}{\kappa^3}$.

    In order to show \condref{it:coerc}, note that
    \begin{equation*}
        \langle Lv, v \rangle \stackrel{\eqref{eq:lin_navier}}{=} -\nu \|v\|_V^2, \qquad v \in V.
    \end{equation*}
    We also note that $\langle F(v), v\rangle \stackrel{\eqref{eq:nse_non_0}}{=} 0$ for $v \in V$. Therefore, the only term that remains to be estimated is
        $\frac{\|B^*(u)u\|_{U}^2}{\|u\|_H^2}$.
    This will also turn out to be 0, by using that the components of $b^k$ are divergence free vector fields. Indeed, we obtain for $k \in \N$,
    \begin{equation*}
        \begin{split}
            &(B^*(v)v)_k= \int_{\mathcal{D}}\left[(b_k, \nabla) v\right]\cdot v \ \dint x\\
            &= \underbrace{\int_{\mathcal{D}} \big((b_k^{11}\partial_1 v^1) v^1 + (b_k^{12} \partial_2 v^1) v^1\big) \dint x}_{\boxed{\text{A}}} + \int_{\mathcal{D}} \big((b_k^{21} \partial_1 v^2)v^2 + (b_k^{22} \partial_2 v^2)v^2\big) \dint x.
        \end{split}
    \end{equation*}
    By renumbering, it suffices to treat \boxed{A}. Using integration by parts, we see:
    \begin{equation*}
        \begin{split}
            \boxed{\text{A}} = \frac 1 2 \int_{\mathcal{D}} \big(b_k^{11} \partial_1 (v^1)^2 + b_k^{12} \partial_ 2 (v^1)^2\big) \dint x = \frac 1 2 \int_{\mathcal D} (\partial_1 b_k^{11} + \partial_2 b_k^{12}) (v^1)^2 \dint x = 0 \\
        \end{split}
    \end{equation*}
    and thus $(B^*(v)v)_k = 0$ for all $k\in \mathbb{N}$. We therefore conclude that the coercivity condition \condref{it:coerc} is as follows:
    \begin{equation*}
        2\langle Lv + F(v), v\rangle + \sum\limits_{k=1}^\infty \|B_k(v)\|_{H}^2 \leq -\nu \|v\|_V^2, \qquad v \in V,
    \end{equation*}
    that is, we can choose $\theta = \kappa$, $f(t) = 0$, and $K_c = 0$.

    In order to show \condref{it:bound2}, we use \eqref{coercivity_assumption_navier} and \eqref{eq:stoch_navier} once more and arrive at
    \begin{equation*}
        \sum\limits_{k=1}^\infty \|B_k(v)\|_{H}^2 \leq (2\nu -\kappa)\|v\|_V^2,
    \end{equation*}
    showing that also \condref{it:bound2} holds on choosing $K_B = 0$ and $K_\alpha = 2\nu-\kappa$.
\end{proof}

\subsection{Systems of second order SPDEs}\label{systems_SPDE}
The authors of \cite{du_2020} develop
a $C^{2+\delta}$ theory for systems of SPDEs.
This relies on integral estimates for a model system of SPDEs (see \cite[Theorem~3.1]{du_2020}). We will show that one of the underlying assumptions, which the authors of \cite{du_2020} call the \textit{modified stochastic parabolicity condition}, fits naturally in our framework.
Sharpness follows from \cite[Example 1.1]{du_2020} which is based on \cite[Section 3]{KimLee}.

Consider a random field $$\mathbf{u} = (u^1, ..., u^N)': \mathbb{R}^d \times [0, \infty) \times \Omega \to \mathbb{R}^N$$ described by the following linear system of SPDEs:
\begin{equation}\label{eq: KaiDu_SPDE}
    \dint u^\alpha = \left(a^{ij}_{\alpha\beta} \partial_{ij} u^\beta + \phi_\alpha\right)\dint t + \left(\sigma^{i}_{k, \alpha\beta}\partial_i u^\beta + \psi_{k, \alpha}\right) \dint W_k(t)
\end{equation}
where the collection $\{W_k\}_{k\geq 1}$ are countably many independent Wiener processes.

In this section we use Einstein's summation convention with $$i, j = 1, 2, ..., d; \quad \alpha, \beta = 1, 2, ..., N; \quad k = 1, 2, ...$$ The assumptions are:
\begin{assumptions}\label{system_assumptions}
    Let $p\in [2,\infty)$, $d\geq 1$ and $N\geq 1$. Let
    \begin{equation*}
        (V, H, V^*) = (H^{m+1}(\R^d; \R^N), H^m(\R^d; \R^N), H^{m-1}(\R^d; \R^N))
    \end{equation*}
    and $U = \ell^2$.
    Further assume that $a^{ij}_{\alpha\beta} \in  L^\infty(\Omega \times [0, T])$ for all $1 \leq i, j \leq d$, $1 \leq \alpha, \beta \leq N$ and $(\sigma^{i}_{k, \alpha\beta})_{k=1}^\infty \in L^\infty(\Omega \times [0, T]; \ell^2)$ for all $1 \leq i \leq d$, $1 \leq \alpha, \beta \leq N$.
    and suppose that the following stochastic modified parabolicity condition is satisfied:
    \begin{itemize}
        \item[(MSP)] The coefficients $a = (a^{ij}_{\alpha\beta})$ and $\sigma = (\sigma^{i}_{k, \alpha\beta})$ are said to satisfy the stochastic modified parabolicity (MSP) condition if there are measurable functions $\lambda^{i}_{k, \alpha\beta}: \mathbb{R}^d \times [0, \infty)\times \Omega \to \mathbb{R}$ with $\lambda^{i}_{k, \alpha\beta} = \lambda^{i}_{k, \beta\alpha}$ such that for
              \begin{equation*}
                  \mathcal{A}^{ij}_{\alpha\beta} = 2a^{ij}_{\alpha\beta} -\sigma^{i}_{k, \gamma\alpha}\sigma^{j}_{k, \gamma\beta} - (p-2)(\sigma^{i}_{k, \gamma\alpha}-\lambda^{i}_{k, \gamma\alpha})(\sigma^{j}_{k, \gamma\beta}-\lambda^{j}_{k, \gamma\beta})
              \end{equation*}
              there exists a constant $\kappa > 0$ with
              \begin{equation*}
                  \mathcal{A}^{ij}_{\alpha\beta} \xi_i \xi_j \eta^\alpha\eta^\beta \geq \kappa |\xi|^2 |\eta|^2 \quad \forall \xi\in\mathbb{R}^d, \eta\in\mathbb{R}^N
              \end{equation*}
              everywhere on $\mathbb{R}^d \times [0, \infty) \times \Omega$.
    \end{itemize}
    Suppose that $u_0\in L^{p}(\Omega,\F_0;H)$ and
    \[
        \phi \in L^p(\Omega; L^2([0, T]; \\ H^{m-1}(\mathbb{R}^d; \mathbb{R}^N))), \quad
        \psi\in L^p(\Omega; L^2([0, T]; H^m(\mathbb{R}^d; \ell^2(\mathbb{N}; \mathbb{R}^N)))).
    \]
\end{assumptions}
\begin{remark}
The above ellipticity condition $\mathcal{A}^{ij}_{\alpha\beta} \xi_i \xi_j \eta^\alpha\eta^\beta \geq \kappa |\xi|^2 |\eta|^2$ is known as the Legendre-Hadamard condition. In case the coefficients depend on the space variable some smoothness is required if one wishes to assume this type of ellipticity. Alternatively, one can consider measurable coefficients with a more restrictive ellipticity condition. For details on these matters we refer to \cite{AHMT}.

In the MSP condition one typically takes
$\lambda^{i}_{k, \alpha\beta} = (\sigma^{j}_{k, \alpha\beta} + \sigma^{j}_{k, \beta\alpha})/2$ or $\lambda^{i}_{k, \alpha\beta} = 0$.
\end{remark}

We can reformulate \eqref{eq: KaiDu_SPDE} as a stochastic evolution equation
\begin{equation}\label{reduced_kaidu}
    \dint u(t) = A(u(t)) \dint t + \sum\limits_{k=1}^\infty B_k(u(t)) \dint W_k(t).
\end{equation}
For this, define the deterministic part of the equation as an operator
\[
    A\colon H^{m+1}(\mathbb{R}^d; \mathbb{R}^N) \to H^{m-1}(\mathbb{R}^d; \mathbb{R}^N)
\]
such that for any $u, v \in H^{m+1}(\mathbb{R}^d; \mathbb{R}^N)$
\begin{equation}\label{System_A}
    \langle A(u), v \rangle = -\int_{\mathbb{R}^d} a^{ij}_{\alpha\beta} \partial_i u^\beta \partial_j u^\alpha \dint x.
\end{equation}
The stochastic part of the equation is defined as an operator
\[
    B\colon H^{m+1}(\mathbb{R}^d; \mathbb{R}^N) \to \mathcal L_2(\ell^2; H^m(\mathbb{R}^d; \mathbb{R}^N))
\]
such that for any $u \in H^{m+1}(\mathbb{R}^d; \mathbb{R}^N)$ and
\begin{equation}\label{System_B}
    B(u)e_k =  B_k(u) \quad \text{with} \quad B_{k, \alpha} (u) = \sigma^{i}_{k, \alpha\beta}\partial_i u^\beta.
\end{equation}
We are now in a position to recover \cite[Theorem~3.1]{du_2020}:
\begin{proposition}\label{prop:system}
    Let $m\geq 0$, and suppose that Assumptions~\ref{system_assumptions} are satisfied.
    Then, \eqref{eq: KaiDu_SPDE} has a unique solution
    \[
        u\in L^p(\Omega; C([0, T]; H^m(\mathbb{R}^d; \mathbb{R}^N)))\cap L^p(\Omega; L^2([0, T]; H^{m+1}(\mathbb{R}^d; \mathbb{R}^N))).
    \]
Moreover, for any multi-index $\mathfrak{s}$ with $|\mathfrak{s}| \leq m$, there exists a constant $C$ depending on $d$, $\kappa$ and $K$ such that
    \begin{align*}
         & \E \sup\limits_{t\in[0, T]} \|\partial^{\mathfrak{s}} u(t)\|^p_{L^2(\mathbb{R}^d; \mathbb{R}^N)} + p^{-1/2} \E \Big(\int_0^T \|\partial^{\mathfrak{s}} \partial_x u(t)\|_{L^2(\mathbb{R}^d; \mathbb{R}^N)}^2 \dint t\Big)^\frac{p}{2}                                                                 \\
         & \quad \leq C \Big(\E\|\partial^{\mathfrak{s}} u_0\|_{L^2(\R^d; \R^N)}^p + \E\Big(\int_0^T \|\partial^{\mathfrak{s}}\phi(t)\|_{H^{-1}(\mathbb{R}^d; \mathbb{R}^N)}^2 \dint t \Big)^{\frac{p}{2}}\\
         & \qquad + \E \Big(\int_0^T \|\partial^{\mathfrak{s}}\psi(t) \|_{L^2(\mathbb{R}^d; \ell^2(\mathbb{N}; \mathbb{R}^N))}^2 \dint t\Big)^{\frac{p}{2}}\Big).
    \end{align*}
\end{proposition}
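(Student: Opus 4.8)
The plan is to reduce \eqref{eq: KaiDu_SPDE} to the abstract framework of Theorem~\ref{Main_theorem} and Corollary~\ref{cor:a_priori_remark}. First I would use Remark~\ref{rem:additive} to drop the inhomogeneities: it suffices to verify \condref{it:hem}--\condref{it:bound2} for the pair $(A,B)$ of \eqref{System_A}--\eqref{System_B} with $f=0$, the contributions of $\phi,\psi$ then reappearing on the right-hand side through $\tilde f=\|\phi\|_{V^*}^{2}+\|\psi\|_{\calL_2(\ell^2,H)}^{2}$ (here $\alpha=2$, so $\tfrac{\alpha}{\alpha-1}=2$). Since the coefficients $a^{ij}_{\alpha\beta},\sigma^{i}_{k,\alpha\beta}$ do not depend on $x$, for each multi-index $\mathfrak s$ with $|\mathfrak s|\le m$ the field $\partial^{\mathfrak s}u$ solves the same system with data $(\partial^{\mathfrak s}u_0,\partial^{\mathfrak s}\phi,\partial^{\mathfrak s}\psi)$; so to get the asserted estimate I would run the abstract machinery on the Gelfand triple $(H^1(\R^d;\R^N),L^2(\R^d;\R^N),H^{-1}(\R^d;\R^N))$ applied to $w:=\partial^{\mathfrak s}u$, while existence and uniqueness of $u$ in the stated class $L^p(\Omega;C([0,T];H^m))\cap L^p(\Omega;L^2([0,T];H^{m+1}))$ come from Theorem~\ref{Main_theorem} on $(H^{m+1},H^m,H^{m-1})$.

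The conditions \condref{it:hem}, \condref{it:weak_mon}, \condref{it:bound1}, \condref{it:bound2} are routine. Hemicontinuity is immediate because $A$ is linear. Boundedness follows from boundedness of the coefficients: $\|A(v)\|_{H^{-1}}\lesssim\|v\|_{H^1}$ and $\sum_k\|B_k(v)\|_{L^2}^2\lesssim\|v\|_{H^1}^2$, giving \condref{it:bound1} and \condref{it:bound2} with $\alpha=2$, $\beta=0$ and $K_B=0$. Local weak monotonicity \condref{it:weak_mon} reduces, by linearity, to the coercivity inequality at $p=2$, so it holds with $\beta=0$.

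The heart of the matter is \condref{it:coerc}, which is exactly where the (MSP) condition enters. For $v\in H^1(\R^d;\R^N)$, $v\neq0$, I would compute, using that $\lambda^{i}_{k,\alpha\beta}$ is symmetric in $(\alpha,\beta)$ and independent of $x$ (so that $\int\lambda^{i}_{k,\alpha\beta}\partial_i(v^\alpha v^\beta)\,\dint x=0$),
\[
(B(v)^*v)_k=\int_{\R^d}\sigma^{i}_{k,\alpha\beta}\,\partial_i v^\beta\, v^\alpha\,\dint x=\int_{\R^d}\bigl(\sigma^{i}_{k,\alpha\beta}-\lambda^{i}_{k,\alpha\beta}\bigr)\,\partial_i v^\beta\, v^\alpha\,\dint x,
\]
whence by the Cauchy--Schwarz inequality in $L^2(\R^d;\R^N)$,
\[
\frac{\|B(v)^*v\|_{\ell^2}^2}{\|v\|_{L^2}^2}\le\sum_{k}\int_{\R^d}\bigl(\sigma^{i}_{k,\gamma\alpha}-\lambda^{i}_{k,\gamma\alpha}\bigr)\bigl(\sigma^{j}_{k,\gamma\beta}-\lambda^{j}_{k,\gamma\beta}\bigr)\,\partial_i v^\alpha\,\partial_j v^\beta\,\dint x.
\]
Adding $2\langle A(v),v\rangle=-2\int a^{ij}_{\alpha\beta}\partial_i v^\beta\partial_j v^\alpha\,\dint x$ and $\sum_k\|B_k(v)\|_{L^2}^2=\int\sigma^{i}_{k,\gamma\alpha}\sigma^{j}_{k,\gamma\beta}\partial_i v^\alpha\partial_j v^\beta\,\dint x$, the left-hand side of \condref{it:coerc} is bounded by $-\int_{\R^d}\mathcal A^{ij}_{\alpha\beta}\,\partial_i v^\alpha\,\partial_j v^\beta\,\dint x$; passing to Fourier variables, $\widehat{\partial_i v^\alpha}(\xi)=\mathrm i\xi_i\widehat v^\alpha(\xi)$, this equals $-\int_{\R^d}\mathcal A^{ij}_{\alpha\beta}\xi_i\xi_j\,\widehat v^\alpha(\xi)\overline{\widehat v^\beta(\xi)}\,\dint\xi$, and the Legendre--Hadamard bound $\mathcal A^{ij}_{\alpha\beta}\xi_i\xi_j\eta^\alpha\eta^\beta\ge\kappa|\xi|^2|\eta|^2$ (applied to the real and imaginary parts of $\widehat v(\xi)$) gives $\le-\kappa\int|\xi|^2|\widehat v(\xi)|^2\,\dint\xi=-\kappa\|v\|_{H^1}^2+\kappa\|v\|_{L^2}^2$. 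Thus \condref{it:coerc} holds with $\theta=\kappa$, $\alpha=2$, $f=0$ and $K_c=\kappa$. Since Corollary~\ref{cor:a_priori_remark} asks for $K_c=0$, I would then substitute $\widetilde u(t):=e^{-\kappa t/2}u(t)$, which by linearity solves the same equation with $A$ replaced by $A-\tfrac{\kappa}{2}\,\mathrm{Id}$; this lowers $K_c$ to $0$ while leaving $\beta=0$, $K_B=0$ and \condref{it:hem}, \condref{it:bound1}, \condref{it:bound2} unchanged, and only multiplies the data by $e^{-\kappa t/2}\le1$.

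Applying Corollary~\ref{cor:a_priori_remark} to $\widetilde w:=e^{-\kappa t/2}\partial^{\mathfrak s}u$ on $(H^1,L^2,H^{-1})$ then yields the $p$-independent estimate with $p^{-1/2}$ in front of the $L^2([0,T];H^1)$-term; reinserting the factor $e^{\kappa t/2}$, restoring $\phi,\psi$ via Remark~\ref{rem:additive}, and using $\|\partial^{\mathfrak s}\partial_x u\|_{L^2}\le\|\partial^{\mathfrak s}u\|_{H^1}$ together with $\|\partial^{\mathfrak s}\phi\|_{H^{-1}}\le\|\phi\|_{H^{m-1}}$ and $\|\partial^{\mathfrak s}\psi\|_{L^2(\ell^2)}\le\|\psi\|_{H^m(\ell^2)}$ for $|\mathfrak s|\le m$ produces the stated inequality (with the constant as in \cite[Theorem~3.1]{du_2020}). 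The main obstacle is the coercivity step \condref{it:coerc}: one must recognise that the symmetric modifier $\lambda$ disappears from $B(v)^*v$, and that after Plancherel precisely the quadratic form $\mathcal A^{ij}_{\alpha\beta}$ emerges, so that the Legendre--Hadamard/(MSP) condition is applicable with $\eta=\widehat v(\xi)$; everything else is bookkeeping and standard boundedness estimates.
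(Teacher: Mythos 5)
Your proposal is correct and follows essentially the same route as the paper: reduce to $m=0$ and $\phi=\psi=0$, verify \condref{it:hem}--\condref{it:bound2} on $(H^1,L^2,H^{-1})$ with $\alpha=2$, $\beta=0$, and use the symmetry of $\lambda$ together with Cauchy--Schwarz to bound $\|B(v)^*v\|_{\ell^2}^2/\|v\|_{L^2}^2$ by the $(\sigma-\lambda)(\sigma-\lambda)$ quadratic form, so that (MSP) yields \condref{it:coerc} with $\theta=\kappa$ and $K_c=\kappa$. You are in fact more explicit than the paper at two points --- the Plancherel argument that turns the Legendre--Hadamard condition into the bound $-\kappa\|v\|_{H^1}^2+\kappa\|v\|_{L^2}^2$, and the rescaling $e^{-\kappa t/2}u$ needed to invoke Corollary~\ref{cor:a_priori_remark} (which requires $K_c=0$) for the $p^{-1/2}$ factor --- though the latter reintroduces a factor $e^{\kappa T/2}$ into the constant, a $T$-dependence that the statement of the proposition (and the paper's own appeal to the corollary despite $K_c=\kappa\neq 0$) glosses over.
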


\begin{proof}[Proof of Proposition~\ref{prop:system}]
Without loss of generality, one can restrict to the case $m = 0$, since the other cases can be obtained by differentiation. We will check the conditions of Theorem~\ref{Main_theorem} and Corollary~\ref{cor:a_priori_remark}. We proceed by showing that Assumptions ~\ref{main_assumptions} \condref{it:hem}-\condref{it:bound2} hold. By Remark~\ref{rem:additive} we may assume  $\phi = \psi = 0$. We only verify coercivity \condref{it:coerc}, since \condref{it:hem}, \condref{it:weak_mon}, \condref{it:bound1} and \condref{it:bound2} are very similar to the stochastic heat equation, to which equation~\eqref{eq: KaiDu_SPDE} reduces on setting $N=1$, and which was treated in subsections ~\ref{stoch_heat_dir} on arbitrary domains. To this end, let $v \in H^1(\mathbb{R}^d; \mathbb{R}^N)$ and consider the following (using the summation convention):
    \begin{align*}
        2\langle A(v), v \rangle & = -2\int_{\mathbb{R}^d}a^{ij}_{\alpha\beta} \partial_i v^\beta \partial_j v^\alpha \dint x.
    \end{align*}
    Next, we use definition \eqref{System_B} to consider the term $\|B_t(v)\|^2$:
    \begin{equation*}
        \begin{split}
            \|B_t(v)\|_{L^2(\mathbb{R}^d; \ell^2(\mathbb{N}; \mathbb{R}^N))}^2
            &= \int_{\mathbb{R}^d} \sigma^{i}_{k, \gamma\alpha}\sigma^{j}_{k, \gamma\beta} \partial_i v^{\beta} \partial_j v^{\alpha} \dint x
        \end{split}
    \end{equation*}
    Considering $\|B_t(v)^*v\|_{\ell^2}^2/\|v\|_{L^2(\R^d;\R^N)}^2$,
    for $v\in H^1(\mathbb{R}^d; \mathbb{R}^N)$, $v\neq 0$, we have:
    \begin{equation}\label{adjoint_term}
        \begin{split}
            \|B_t(v)^*v\|_{\ell^2}^2 &= \sum\limits_{k=1}^\infty |(B_t(v)^*v)_k|^2=\sum\limits_{k=1}^\infty\Big(\int_{\mathbb{R}^d} \sigma^{i}_{k, \gamma\beta} (\partial_ i v ^\beta) v^\gamma \dint x\Big)^2.
        \end{split}
    \end{equation}
    Note that the following identity holds:
    \begin{equation*}
        \sigma^{i}_{k, \gamma\beta}\partial_i v^\beta v^\gamma = (\sigma^{i}_{k, \gamma\beta}-\lambda^{i}_{k, \gamma\beta})v^\gamma \partial_i v^\beta + \frac{1}{2} \lambda_{k, \gamma\beta}^{i} \partial_i(v^\gamma v^\beta).
    \end{equation*}
    Integrating both sides of the above expression over $\mathbb{R}^d$, by equation \eqref{adjoint_term} we find:
    \begin{align*}
        \|B_t(v)^*v\|_{\ell^2}^2  &= \sum\limits_{k=1}^\infty\Big(\int_{\mathbb{R}^d} (\sigma^{i}_{k,\gamma\beta} - \lambda^{i}_{k,\gamma\beta}) (\partial_ i v ^\beta) v^\gamma \dint x\Big)^2                                                                                                                                   \\ & \stackrel{\mathrm{(i)}}{\leq} \sum\limits_{k=1}^\infty \Big(\int_{\mathbb{R}^d}\Big(\sum\limits_{\gamma=1}^N (v^\gamma)^2\Big)^{\frac{1}{2}} \Big( \sum\limits_{\gamma=1}^N((\sigma^{i}_{k,\gamma\beta} - \lambda^{i}_{k,\gamma\beta})\partial_i v^\beta)^2\Big)^{\frac{1}{2}} \dint x\Big)^2                              \\ & \stackrel{\mathrm{(i)}}{\leq} \sum\limits_{k=1}^\infty \|v\|^2_{L^2(\mathbb{R}^d; \mathbb{R}^N)} \Big(\int_{\mathbb{R}^d}((\sigma^{i}_{k,\gamma\beta}-\lambda^{i}_{k,\gamma\beta})\partial_i v^\beta)^2 \dint x \Big)  \\
 & = \|v\|^2_{L^2(\mathbb{R}^d; \mathbb{R}^N)} \int_{\mathbb{R}^d} (\sigma^{i}_{k,\gamma\beta}-\lambda^{i}_{k,\gamma\beta})(\sigma^{j}_{k,\gamma\alpha}-\lambda^{j}_{k,\gamma\alpha})\partial_i v^\beta \partial_j v^\alpha \dint x ,
    \end{align*}
    where the Cauchy-Schwarz inequality is applied at (i). This leads to
    \begin{equation*}
        \frac{\|B_t(v)^*v\|_{\ell^2}^2}{\|v\|^2_{L^2(\mathbb{R}^d; \mathbb{R}^N)}} \leq \sum\limits_{k=1}^\infty\Big(\int_{\mathbb{R}^d} (\sigma^{i}_{k,\gamma\beta}-\lambda^{i}_{k,\gamma\beta})(\sigma^{j}_{k,\gamma\alpha}-\lambda^{j}_{k,\gamma\alpha})\partial_i v^\beta \partial_j v^\alpha \dint x \Big).
    \end{equation*}
    Therefore, the coercivity condition \condref{it:coerc} can be derived from (MSP) as:
    \begin{equation*}
        \begin{split}
            &2\langle A(v), v\rangle + \|(B_t(v))\|_{L^2(\mathbb{R}^d; \ell^2(\mathbb{N};\mathbb{R}^N))}^2 + (p-2)\frac{\|B_t(v)^*v\|_{\ell^2}^2}{\|v\|^2_{L^2(\mathbb{R}^d; \mathbb{R}^N)}}\\
            &\leq \int_{\mathbb{R}^d} \left(-2a^{ij}_{\alpha\beta} + \sigma^{i}_{k,\gamma\alpha}\sigma^{j}_{k,\gamma\beta} + (p-2)(\sigma^{i}_{k,\gamma\beta}-\lambda^{i}_{k,\gamma\beta})(\sigma^{j}_{k,\gamma\alpha}-\lambda^{j}_{k,\gamma\alpha})\right)\partial_i v^\beta \partial_j v^\alpha \dint x\\
            &\leq -\kappa \|v\|_{H^1(\mathbb{R}^d; \mathbb{R}^N)}^2 + \kappa\|v\|_{L^2(\R^d;\R^N)}^2,
        \end{split}
    \end{equation*}
    which shows that \condref{it:coerc} holds with $\theta = \kappa$, $f = 0$, and $K_c = \kappa$.
\end{proof}

\subsection{Higher order SPDEs}\label{higher_SPDE}
In this section we consider the following on $\R^d$:
\begin{equation}\label{higher_order_spde}
    \dint u(t) = \Big[(-1)^{m+1}\sum\limits_{|\alpha|,|\beta|=m}\partial^\beta (A^{\alpha\beta})\partial^{\alpha}u(t)) + \phi(t)\Big]\dint t + \sum\limits_{k=1}^\infty \Big[\sum\limits_{|\alpha|=m}B_{k, \alpha}\partial^\alpha u(t) + \psi_{k, t}\Big] \dint W_k(t),
\end{equation}
where $(W_k(t))_{t\geq 0}$ are countably many independent Wiener processes.

The above equation was considered in \cite{wang_2020}, and below we will show that the $p$-dependent well-posedness results can be obtained within our abstract framework. Additionally, our coefficients to be space dependent.  The assumptions are:
\begin{assumptions}\label{higher_order_assumptions}
    Let $d\geq 1$, $m \geq 1$ and let
    \begin{equation*}
        (V, H, V^*) = (H^m(\mathbb{R}^d), L^2(\mathbb{R}^d), H^{-m}(\mathbb{R}^d)).
    \end{equation*}
    and take $U = \ell^2$. Further assume that the coefficients $A^{\alpha\beta} \in L^\infty(\Omega\times[0, T]\times \D)$ for all $1\leq\alpha,\beta \leq d$. Suppose that
    \[(B_{k, \alpha})_{k=1}^\infty  \in \left\{
        \begin{array}{ll}
          L^\infty(\Omega\times [0, T] \times \R^d; \ell^2), & \hbox{if $m$ is even;} \\
          W^{1, \infty}(\R^d;\ell^2), & \hbox{if $m$ is odd.}
        \end{array}
      \right.
    \]
 Assume that the coefficients satisfy the following coercivity condition:
    \begin{equation}\label{eq:wang_coercivity}
        2\sum\limits_{|\alpha|, |\beta = m} A^{\alpha\beta}\xi_\alpha\xi_\beta - \frac{p+(-1)^m(p-2)}{2}\sum\limits_{k=1}^\infty\Big|\sum\limits_{|\alpha|=m} B_{k, \alpha} \xi_\alpha \Big|^2 \geq \lambda \sum\limits_{|\alpha|=m} |\xi_\alpha|^2,
    \end{equation}
    where $\lambda >0$. Furthermore, suppose $u_0\in L^{p}(\Omega, \F_0; H)$,
    \[
        \phi\in L^p(\Omega; L^2([0, T]; \\ H^{-m}(\mathbb{R}^d))) \quad and \quad \psi\in L^p(\Omega; L^2([0, T]; L^2(\mathbb{R}^d; \ell^2))).
    \]

    \end{assumptions}
Next, we reformulate SPDE \eqref{higher_order_spde} into a stochastic evolution equation $$\dint u(t) = A(t, u(t)) \dint t + \sum\limits_{k=1}^\infty B_k(t, u(t)) \dint W_k(t).$$
The drift part of the equation is defined as a time-dependent linear operator
\[
    A(t)\colon H^{m}(\mathbb{R}^d) \to H^{-m}(\mathbb{R}^d),
\]
where for all $u, v \in H^{m}(\mathbb{R}^d)$:
\begin{equation}\label{higher_order_A}
    \langle A(t, u), v\rangle = - \sum\limits_{|\alpha|, |\beta| = m} \langle A^{\alpha\beta}\partial^\alpha u, \partial^\beta v\rangle = -\sum\limits_{|\alpha|, |\beta| = m} \int_{\mathbb{R}^d} A^{\alpha\beta} (\partial^\alpha u) (\partial^\beta v) \dint x.
\end{equation}
Similarly, the stochastic part is defined as a time-dependent linear operator
\[
    B\colon H^m(\R^d) \to \mathcal L_2(\ell^2, L^2(\mathbb{R}^d)),
\]
where for all $u \in H^{m}(\mathbb{R}^d)$
\begin{equation*}
    B(u)e_k = B_k(u)= \sum\limits_{|\alpha|=m} B_{k, \alpha}(t) \partial^\alpha u.
\end{equation*}
\begin{proposition}\label{prop:higherorder}
    Suppose that Assumption~\ref{higher_order_assumptions} is satisfied. Then, \eqref{higher_order_spde} has a unique solution in
    \[
        u \in L^p(\Omega; C([0, T]; L^2(\mathbb{R}^d))) \cap L^p(\Omega; L^2([0, T]; H^{m}(\mathbb{R}^d))).
    \]
    Furthermore, there exists a constant $C$ only depending on $\lambda$, $d$ and $p$ such that
    \begin{align*}
            &\E \sup\limits_{t\in[0, T]}\|u(t)\|_{L^2(\mathbb{R}^d)}^p + \E \Big(\int_0^T \|u(t)\|_{H^m(\mathbb{R}^d)}^2 \dint t\Big)^{\frac{p}{2}} \\ &\leq Ce^{CT} \bigg(\E\|u_0\|_{L^2(\D)}^p + \E \Big(\int_0^T \|\phi(t)\|^2_{H^{-m}(\mathbb{R}^d)}\dint t\Big)^{\frac{p}{2}} + \E \Big(\int_0^T \|\psi(t)\|_{L^2(\mathbb{R}^d;\ell^2)}^2 \dint t\Big)^{\frac{p}{2}}\bigg).
    \end{align*}
\end{proposition}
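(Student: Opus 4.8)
The plan is to verify Assumptions~\ref{main_assumptions}~\condref{it:hem}--\condref{it:bound2} for the reformulated equation and then invoke Theorem~\ref{Main_theorem}. By Remark~\ref{rem:additive} we may take $\phi=\psi=0$ and $f=0$; throughout we work with $\alpha=2$ and $\beta=0$, and we freely interchange $\sum_{|\gamma|=m}\|\partial^\gamma v\|_{L^2(\R^d)}^2$, $\|v\|_{\dot H^m(\R^d)}^2$ and (after adding $\|v\|_{L^2}^2$) $\|v\|_{H^m(\R^d)}^2=\|v\|_V^2$, which are comparable by Plancherel's theorem. Hemicontinuity \condref{it:hem} is immediate since $A(t,\cdot,\omega)$ is linear, so $\lambda\mapsto\langle A(t,u+\lambda w,\omega),z\rangle$ is affine. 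For the boundedness conditions, the Cauchy--Schwarz inequality together with $A^{\alpha\beta}\in L^\infty$ gives $|\langle A(t,u),v\rangle|\lesssim\|u\|_{H^m}\|v\|_{H^m}$, i.e.\ \condref{it:bound1} with $\alpha/(\alpha-1)=2$, $\beta=0$, and $K_A$ controlled by the $\|A^{\alpha\beta}\|_\infty$; similarly Cauchy--Schwarz in the $\ell^2$-index $k$, using $(B_{k,\alpha})_k\in L^\infty(\R^d;\ell^2)$, yields $\|B(t,v)\|_{\calL_2(\ell^2,L^2)}^2\lesssim\sum_{|\alpha|=m}\|\partial^\alpha v\|_{L^2}^2\lesssim\|v\|_{H^m}^2$, which is \condref{it:bound2} with $K_B=0$.

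The core is \condref{it:coerc}. Integrating the pointwise inequality \eqref{eq:wang_coercivity} with $\xi_\alpha=\partial^\alpha v(x)$ over $\R^d$ yields, writing $c:=\tfrac{p+(-1)^m(p-2)}{2}\ge 1$,
\[
2\langle A(t,v),v\rangle \;\le\; -\lambda\sum_{|\alpha|=m}\|\partial^\alpha v\|_{L^2}^2 \;-\; c\sum_k\|B_k(t,v)\|_{L^2}^2 .
\]
In particular $2\langle A(t,v),v\rangle+\|B(t,v)\|^2_{\calL_2}\le-\lambda\sum_{|\alpha|=m}\|\partial^\alpha v\|_{L^2}^2\le 0$, so local weak monotonicity \condref{it:weak_mon} holds trivially (by linearity, applied to $u-v$) with $\beta=0$ and any $K\ge0$. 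It remains to control the $p$-dependent term $(p-2)\|B(t,v)^*v\|_{\ell^2}^2/\|v\|_{L^2}^2$, and here the parity of $m$ matters. When $m$ is even, $c=p-1$, and combining the display with the elementary bound $\|B(t,v)^*v\|_{\ell^2}^2/\|v\|_{L^2}^2\le\sum_k\|B_k(t,v)\|_{L^2}^2$ makes the noise terms cancel exactly, because $1-(p-1)+(p-2)=0$; the left-hand side of \condref{it:coerc} is then $\le-\lambda\sum_{|\alpha|=m}\|\partial^\alpha v\|_{L^2}^2$, and reintroducing $\|v\|_{L^2}^2$ gives \condref{it:coerc} with $\theta\asymp\lambda$, $f=0$, $K_c\asymp\lambda$; no smoothness of $B$ is needed, and the weight $p-1$ built into \eqref{eq:wang_coercivity} is precisely what produces the cancellation.

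When $m$ is odd, $c=1$, so the noise terms cancel only the $+\|B(t,v)\|^2_{\calL_2}$ and the crude bound no longer suffices: one must show $B(t,v)^*v$ is genuinely of lower order. Fix $|\alpha|=m$; the relevant contribution is $\int_{\R^d}B_{k,\alpha}(\partial^\alpha v)\,v\,\dint x$. Integration by parts, moving derivatives off $\partial^\alpha v$ and using equality of mixed partials, rewrites this for a split $\alpha=\mu+\nu$ with $|\mu|=\lceil m/2\rceil$, $|\nu|=\lfloor m/2\rfloor$ as $(-1)^{\lceil m/2\rceil}\int B_{k,\alpha}(\partial^\mu v)(\partial^\nu v)\,\dint x$ plus a remainder in which at least one derivative has landed on $B_{k,\alpha}$; performing the same computation with the complementary split $|\mu|=\lfloor m/2\rfloor$, $|\nu|=\lceil m/2\rceil$ produces the identical integral with the opposite sign, since $\lceil m/2\rceil$ and $\lfloor m/2\rfloor$ have opposite parity when $m$ is odd. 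Adding the two identities shows $\int B_{k,\alpha}(\partial^\alpha v)v\,\dint x$ equals a sum of terms $\int(\partial_j B_{k,\alpha})(\partial^\mu v)(\partial^\nu v)\,\dint x$ with $|\mu|+|\nu|=m-1$. (This is exactly why $W^{1,\infty}$-regularity of $B$ is imposed for odd $m$; for $m=1$ it reduces to the familiar identity $(B_k(v)^*v)=-\tfrac12\int\DIV(b_k)\,v^2\,\dint x$.) Estimating each such term by Cauchy--Schwarz — first in $x$, then in $k$ using $\|\nabla B_{\cdot,\alpha}\|_{L^\infty(\R^d;\ell^2)}$ — and interpolating $\|v\|_{\dot H^{|\mu|}}\|v\|_{\dot H^{|\nu|}}\lesssim\|v\|_{L^2}^{(m+1)/m}\|v\|_{\dot H^m}^{(m-1)/m}$ gives, for every $\e>0$,
\[
\frac{\|B(t,v)^*v\|_{\ell^2}^2}{\|v\|_{L^2}^2}\;\le\;\e\,\|v\|_{\dot H^m}^2+C_\e\|v\|_{L^2}^2 .
\]
Choosing $\e$ small relative to $\lambda/(p-2)$ absorbs this into the good term in the integrated inequality and yields \condref{it:coerc} with $\theta\asymp\lambda$, $f=0$, and $K_c$ depending on $\lambda,p,d,m$ and $\|B\|_{W^{1,\infty}}$.

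With \condref{it:hem}--\condref{it:bound2} in hand (and $p\ge\beta+2=2$ automatic), Theorem~\ref{Main_theorem} applies and delivers existence, uniqueness, and the asserted estimate, the constant having the stated dependence. The main obstacle is the odd-order case of \condref{it:coerc}: in contrast to the constant-coefficient setting of \cite{wang_2020}, where $B(t,v)^*v$ vanishes outright, one must here manufacture the cancellation by hand through the parity/integration-by-parts argument, then absorb the resulting lower-order remainder by interpolation, all while keeping the $\ell^2(\N)$-structure of the noise coefficients intact throughout.
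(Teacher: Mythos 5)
Your proposal is correct and follows essentially the same route as the paper: reduce to $\phi=\psi=0$ via Remark~\ref{rem:additive}, verify \condref{it:hem}--\condref{it:bound2}, and split the coercivity check by the parity of $m$, using the crude bound $\|B(t,v)^*v\|_{\ell^2}^2/\|v\|_{L^2}^2\le\|B(t,v)\|_{\calL_2}^2$ for even $m$ and an integration-by-parts plus interpolation argument for odd $m$. Your symmetrization via two complementary splits of $\alpha$ is just a rephrasing of the paper's Lemma~\ref{lemma: higher_order_coercivity} (which writes $I=-I+R$ using $(-1)^m=-1$), and both land on the same remainder with one derivative on $B$ and $m-1$ derivatives on two copies of $v$.
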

\begin{remark}
    If the coefficients are not space-dependent, one can shift the regularity as in subsection \ref{systems_SPDE}. Moreover, in that case the estimate can be obtained with more explicit constants independent of $p$ and $T$ as in Corollary \ref{cor:a_priori_remark}.
\end{remark}
\begin{remark}
    For $m$ even, no smoothness assumptions on $B$ have been made. In case $m$ is odd one can also deal with the non-smooth case, but this will require a $p$-dependent coercivity condition as in the even case. 
\end{remark}
Before starting the proof, we state a lemma needed for the coercivity condition.

\begin{lemma}\label{lemma: higher_order_coercivity}
    Suppose that $m = 2n+1$ with $n\in \N_0$, and that Assumption~\ref{higher_order_assumptions} is satisfied.
    Let $\zeta \in W^{1, \infty}(\R^d;\ell^2)$. Let $\alpha\in \N^d$ be such that $|\alpha|\leq m$. Then for every $\varepsilon>0$ there exists a $C_{\varepsilon}>0$ depending on $m$ such that for all $v\in H^m(\R^d)$
    \[
    \frac{\Big\|\int_{\R^d} \zeta  v \partial^{\alpha} v dx\Big\|_{\ell^2}}{\|v\|_{L^2(\R^d)}}\leq \varepsilon \|v\|_{H^{m}(\R^d)} + C_{\varepsilon} \|v\|_{L^2(\R^d)}.
    \]
\end{lemma}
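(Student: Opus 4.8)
The plan is to split into the easy case $|\alpha|<m$ and the essential case $|\alpha|=m=2n+1$, using throughout the interpolation inequality $\|v\|_{H^s(\R^d)}\le\|v\|_{H^m(\R^d)}^{s/m}\|v\|_{L^2(\R^d)}^{1-s/m}$ for $0\le s\le m$ (immediate from Plancherel and H\"older on the Fourier side) together with Young's inequality in the form $a^tb^{1-t}\le\varepsilon a+C_{\varepsilon,t}b$ for $t\in(0,1)$, $a,b\ge 0$.

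If $|\alpha|<m$ the estimate is elementary. By the Cauchy--Schwarz inequality in $x$,
\[
\Big\|\int_{\R^d}\zeta\,v\,\partial^{\alpha}v\,dx\Big\|_{\ell^2}\le\|\zeta\|_{L^\infty(\R^d;\ell^2)}\,\|v\|_{L^2(\R^d)}\,\|\partial^{\alpha}v\|_{L^2(\R^d)}\le\|\zeta\|_{L^\infty(\R^d;\ell^2)}\,\|v\|_{L^2(\R^d)}\,\|v\|_{H^{|\alpha|}(\R^d)},
\]
and after dividing by $\|v\|_{L^2(\R^d)}$ one interpolates $\|v\|_{H^{|\alpha|}}\le\|v\|_{H^m}^{|\alpha|/m}\|v\|_{L^2}^{1-|\alpha|/m}$ and applies Young's inequality (for $|\alpha|=0$ the right-hand side is already $\le C\|v\|_{L^2}$).

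The case $|\alpha|=m$ is the heart of the matter: here the naive interpolation only yields the bound $C\|v\|_{H^m}\|v\|_{L^2}$ with a constant that cannot be made small, and the oddness of $m$ must be exploited through a telescoping argument. Fix a factorization $\partial^{\alpha}=\partial_{i_1}\cdots\partial_{i_m}$, the indices $i_1,\dots,i_m$ being listed with the multiplicities prescribed by $\alpha$, and set $\mu_\ell=e_{i_1}+\cdots+e_{i_\ell}$, $\nu_\ell=e_{i_{\ell+1}}+\cdots+e_{i_m}$, so that $\mu_\ell+\nu_\ell=\alpha$ and $\nu_\ell=\nu_{\ell+1}+e_{i_{\ell+1}}$. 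Put
\[
X_\ell:=\int_{\R^d}\zeta\,(\partial^{\mu_\ell}v)(\partial^{\nu_\ell}v)\,dx\in\ell^2,\qquad 0\le\ell\le m,
\]
so that $X_0=X_m=\int_{\R^d}\zeta\,v\,\partial^{\alpha}v\,dx$. Writing $\partial^{\mu_{\ell+1}}v=\partial_{i_{\ell+1}}\partial^{\mu_\ell}v$ and integrating by parts in $x_{i_{\ell+1}}$ (licit since $\partial^{\mu_\ell}v,\partial^{\nu_{\ell+1}}v\in H^1(\R^d)$ and $\zeta\in W^{1,\infty}$; alternatively by density of $C_c^\infty$) one obtains the recursion
\[
X_{\ell+1}=-\int_{\R^d}(\partial_{i_{\ell+1}}\zeta)(\partial^{\mu_\ell}v)(\partial^{\nu_{\ell+1}}v)\,dx-X_\ell=:-G_\ell-X_\ell .
\]
Iterating this recursion and using $X_0=X_m$ together with $m$ odd collapses the alternating sum and yields $2X_0=\sum_{\ell=0}^{m-1}\pm G_\ell$, hence
\[
\Big\|\int_{\R^d}\zeta\,v\,\partial^{\alpha}v\,dx\Big\|_{\ell^2}\le\tfrac12\sum_{\ell=0}^{m-1}\|G_\ell\|_{\ell^2}.
\]

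It then remains to estimate each $G_\ell$. Since $|\mu_\ell|+|\nu_{\ell+1}|=m-1$ and $\partial_{i_{\ell+1}}\zeta\in L^\infty(\R^d;\ell^2)$, Cauchy--Schwarz and the interpolation inequality give
\[
\|G_\ell\|_{\ell^2}\le\|\zeta\|_{W^{1,\infty}(\R^d;\ell^2)}\,\|v\|_{H^{|\mu_\ell|}(\R^d)}\,\|v\|_{H^{m-1-|\mu_\ell|}(\R^d)}\le C\,\|v\|_{H^m(\R^d)}^{(m-1)/m}\,\|v\|_{L^2(\R^d)}^{(m+1)/m},
\]
with $C$ depending only on $m$, $d$ and $\zeta$. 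Summing over $\ell$, dividing by $\|v\|_{L^2(\R^d)}$ and applying Young's inequality with exponents $m/(m-1)$ and $m$ gives
\[
\frac{\big\|\int_{\R^d}\zeta\,v\,\partial^{\alpha}v\,dx\big\|_{\ell^2}}{\|v\|_{L^2(\R^d)}}\le C\,\|v\|_{H^m(\R^d)}^{(m-1)/m}\|v\|_{L^2(\R^d)}^{1/m}\le\varepsilon\|v\|_{H^m(\R^d)}+C_\varepsilon\|v\|_{L^2(\R^d)},
\]
which, combined with the case $|\alpha|<m$, is the assertion. The main obstacle is precisely that the critically balanced bilinear terms $\int_{\R^d}\zeta(\partial^\mu v)(\partial^\nu v)\,dx$ with $|\mu|+|\nu|=m$ are \emph{not} individually controllable by $\varepsilon\|v\|_{H^m}\|v\|_{L^2}+C_\varepsilon\|v\|_{L^2}^2$; the telescoping identity bypasses them, and it closes up only because $m$ is odd (for $m$ even the chain produces the trivial identity $0=0$, consistent with the fact that the even-order case genuinely requires a $p$-dependent coercivity condition).
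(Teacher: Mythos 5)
Your proof is correct and follows essentially the same route as the paper: repeated integration by parts moves the $m$ derivatives across one at a time, the oddness of $m$ turns the fully-integrated term into $-\int_{\R^d}\zeta\,v\,\partial^{\alpha}v\,dx$ so that the critical term is recovered with a factor $2$ and only commutator terms $\int_{\R^d}(\partial_i\zeta)(\partial^{\beta}v)(\partial^{\gamma}v)\,dx$ with $|\beta|+|\gamma|=m-1$ remain, which are then handled by Cauchy--Schwarz, interpolation and Young exactly as in the paper. Your explicit telescoping recursion $X_{\ell+1}=-G_\ell-X_\ell$ is just a more formal bookkeeping of the paper's "integrate by parts $|\alpha|$ times" step, and your closing remark about the even case degenerating to $0=0$ correctly explains why the lemma is restricted to odd $m$.
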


\begin{proof}
    By density it suffices to consider $v\in C^\infty_c(\R^d)$.
    If $|\alpha|=m$, then we reduce the number of derivatives by one order.
    Integrating by parts $|\alpha|$ times we obtain
    \begin{align*}
    \int_{\R^d} \zeta_k  v \partial^{\alpha} v dx = -\int_{\R^d} \zeta_k  v\partial^{\alpha}v dx + R.
    \end{align*}
    where $R_k$ is a linear combination of terms of the form
    $\int_{\R^d} \partial^{\wt{\alpha}} \zeta_k  \partial^{\beta} v \partial^{\gamma} v dx$ with $|\wt{\alpha}| + |\beta| + |\gamma| = |\alpha|$ and $|\wt{\alpha}| = 1$. Therefore,
    $\int_{\R^d} \zeta_k  v \partial^{\alpha} v dx =\frac12R_k$
    is of lower order in $v$. Moreover, note that
\[\Big\|\int_{\R^d} \partial^{\wt{\alpha}} \zeta  \partial^{\beta} v \partial^{\gamma} v dx\Big\|_{\ell^2} \leq \|\zeta\|_{W^{1, \infty}(\Distr;\ell^2)}\int_{\R^d} |\partial^{\beta} v| \, |\partial^{\gamma} v| dx.\]

    From the above it follows that it remains to show  that for every $|\beta|+|\gamma|\leq m-1$.
    \begin{align}\label{eq:oddexpress}
    \frac{\int_{\R^d} |\partial^{\beta} v| \, |\partial^{\gamma} v| dx}{\|v\|_{L^2(\R^d)}} \leq \varepsilon \|v\|_{H^{m}(\R^d)} + C_{\varepsilon} \|v\|_{L^2(\R^d)}
    \end{align}
    By Cauchy--Schwarz' inequality and standard interpolation estimates we find that
    \begin{align*}
    \int_{\R^d} |\partial^{\beta} v| \, |\partial^{\gamma} v| dx& \leq \|v\|_{H^{|\beta|}(\R^d)} \|v\|_{H^{|\gamma|}(\R^d)}
    \\ & \leq C\|v\|_{H^m(\R^d)}^{\frac{|\beta|}{m}} \|v\|_{L^2(\R^d)}^{1-\frac{|\beta|}{m}} \|v\|_{H^m(\R^d)}^{\frac{|\beta_3|}{m}} \|v\|_{L^2(\R^d)}^{1-\frac{|\gamma|}{m}}
    \\ & = C\|v\|_{H^m(\R^d)}^{\ell/m} \|v\|_{L^2(\R^d)}^{2-\frac{\ell}{m}}
    \end{align*}
    where we have set $\ell:=|\beta|+|\gamma|\leq m-1$,
    Therefore, by Young's inequality we obtain that for every $\varepsilon>0$ there exists a $C_{\varepsilon}>0$ such that
    \begin{align*}
    \frac{\int_{\R^d} |\partial^{\beta} v| \, |\partial^{\gamma} v| dx}{\|v\|_{L^2(\R^d)}} \leq C \|v\|_{H^m(\R^d)}^{\ell/m} \|v\|_{L^2(\R^d)}^{1-\frac{\ell}{m}} \leq \varepsilon \|v\|_{H^m(\R^d)} +C_{\varepsilon}\|v\|_{L^2(\R^d)}
    \end{align*}
    which is \eqref{eq:oddexpress}.
\end{proof}

\begin{proof}[Proof of Proposition~\ref{prop:higherorder}]
    Furthermore, set $\phi = \psi = 0$ by Remark~\ref{rem:additive}. We only check coercivity \condref{it:coerc}, since the other conditions are similar to the stochastic heat equation treated in subsections~\ref{stoch_heat_dir} and \ref{stoch_heat_neu} in case of bounded domains. From now on, consider an arbitrary $v\in H^m(\mathbb{R}^d)$. From \eqref{higher_order_A}, we see that
    \begin{equation*}
        2\langle A(t, v), v \rangle = -2 \sum\limits_{|\alpha|, |\beta| = m} \int_{\mathbb{R}^d} A_{\alpha\beta} (\partial^\alpha v) (\partial^\beta v)  \dint x.
    \end{equation*}
    For $\|B(t, v)\|_{L^2(\mathbb{R}^d;\ell^2)}^2$ we obtain
    \begin{align*}
        \|B(t, v)\|_{L^2(\mathbb{R}^d;\ell^2)}^2 & = \sum\limits_{k=1}^\infty \Big\|\sum\limits_{|\alpha|=m} B_{k, \alpha} \partial^\alpha v\Big\|_{L^2(\mathbb{R}^d)}^2                                         \\                                                             & = \sum\limits_{k=1}^\infty \int_{\mathbb{R}^d} \sum\limits_{|\alpha|, |\beta|=m} B_{k, \alpha} B_{k, \beta} (\partial^\alpha v) (\partial^\beta v) \dint x.
    \end{align*}
    The last term that needs to be inspected is $\|B(t, v)^*v\|_{\ell^2}^2$, which is inspected for the cases $m$ odd and $m$ even separately. If $m$ is odd, write $m = 2n + 1$ for $n \in \N_0$.
By Lemma~\ref{lemma: higher_order_coercivity} we obtain
    \begin{equation}
        \frac{\|B(t, v)^*v\|_{\ell^2}^2}{\|v\|_{L^2(\mathbb{R}^d)}^{2}} \leq \varepsilon \|v\|_{H^m(\R^d)}^2 + C_\varepsilon\|v\|_{L^2(\R^d)}^2,
    \end{equation}
    where we are free to choose $\varepsilon > 0$, and $C_\varepsilon$ depends on $B$. Therefore, if $m$ is odd, the following inequalities for the coercivity condition \condref{it:coerc} hold:
    \begin{equation*}
        \begin{split}
            &2\langle A(t, v), v\rangle + \|B(t, v)\|_{L^2(\mathbb{R}^d;\ell^2)}^2 + (p-2)\frac{\|B(t, v)^*v\|_{\ell^2}^2}{\|v\|_{L^2(\mathbb{R}^d)}}\\
            &\leq \sum\limits_{|\alpha|, |\beta| = m} \int_{\mathbb{R}^d} \Big(-2A_{\alpha\beta} + \sum\limits_{k=1}^\infty B_{k, \alpha} B_{k, \beta} \Big) (\partial^\alpha v) (\partial^\beta v) \dint x\\
            &\quad + \varepsilon (p-2)\|v\|_{H^m(\R^d)}^2 + C_\varepsilon (p-2) \|v\|_{L^2(\R^d)}^2\\
            &\leq (-\lambda +\varepsilon) \|v\|_{H^m(\mathbb{R}^d)}^2 + C_\varepsilon \|v\|_{L^2(\R^d)}^2.\\
        \end{split}
    \end{equation*}
    Choosing $\varepsilon$ small enough, the coercivity condition \condref{it:coerc} holds with $\theta = \lambda-\varepsilon(p-2)$, $f = 0$ and $K_c = \varepsilon(p-2)$.

    If $m$ is even, we use the Cauchy-Schwarz inequality to show
    \begin{equation*}
        \frac{\|B(t, v)^*v\|_{\ell^2}^2}{\|v\|_{L^2(\mathbb{R}^d)}^{2}}\leq \|B(t, v)\|_{L^2(\mathbb{R}^d;\ell^2)}^2.
    \end{equation*}
    Using the condition \eqref{eq:wang_coercivity} on the coefficients of Assumptions~\ref{higher_order_assumptions}, we can combine all terms to get the following inequalities for the coercivity condition \condref{it:coerc}:
    \begin{equation*}
        \begin{split}
            &2\langle A(t, v), v\rangle + \|B(t, v)\|_{L^2(\mathbb{R}^d;\ell^2)}^2 + (p-2)\frac{\|B(t, v)^*v\|_{\ell^2}^2}{\|v\|_{L^2(\mathbb{R}^d)}}\\
            &\leq \sum\limits_{|\alpha|, |\beta| = m} \int_{\mathbb{R}^d} \Big(-2A_{\alpha\beta} + (p-1)\sum\limits_{k=1}^\infty B_{k, \alpha} B_{k, \beta} \Big) (\partial^\alpha v) (\partial^\beta v) \dint x\\
            &\leq -\lambda \|v\|_{H^m(\mathbb{R}^d)}^2.\\
        \end{split}
    \end{equation*}
    In this case, the coercivity condition \condref{it:coerc} holds with $\theta = \lambda$, $f = 0$, and $K_c = 0$.
\end{proof}

\subsection{Stochastic p-Laplacian with Dirichlet boundary conditions}\label{stoch_p_lapl}
We consider the following stochastic version of the $p$-Laplace equation:
\begin{equation}\label{stoch_p_laplacian}
    \dint u(t) = \nabla \cdot (|\nabla u(t)|^{\alpha-2} \nabla u(t)) \dint t+ \sum\limits_{k=1}^\infty B_k(u(t)) \dint W_k(t),
\end{equation}
where $(W_k(t))_{t\geq 0}$ are countably many independent Wiener processes. Since we reserve $p$ for the moment in probability, we use $\alpha > 2$ instead of $p$ in the $p$-Laplacian.

We will prove existence, uniqueness and an energy estimate. The arguments are similar to \cite{neelima_2020}, who consider a slightly different leading order operator in \eqref{stoch_p_laplacian}. Moreover, they have an additional nonlinear term $f(u)dt$, which can also be included in our setting.
\begin{assumption}\label{assumptions_p_laplacian}
    Let $\Distr \subset \R^d$ be a bounded domain, $\alpha > 2$, $\gamma^2 \leq 8 \frac{\alpha-1}{\alpha^2}$ and $p \in \big[2, \frac{2}{\gamma^2} + 1\big)$ and $u_0 \in L^{p}(\Omega; L^2(\mathcal{D}))$. Consider
    \[
        (V, H, V^*) = (W_0^{1, \alpha}(\Distr), L^2(\Distr), W^{1, \alpha}_0(\Distr)^*),
    \]
    and set $U = \ell^2$. Let $B \colon W_0^{1,\alpha}(\mathcal D) \to \mathcal L_2(\ell^2,L^2(\mathcal D))$, where for $u \in W^{1,\alpha}_0(\mathcal D)$ and we have $B(u) e_k = B_k(u)$ and $B_k\colon W_0^{1, \alpha}(\Distr) \to L^2(\Distr)$ satisfies $B_k(0) = 0$ and for all $u, v \in W_0^{1, \alpha}(\mathcal{D})$:
    \begin{equation}\label{p-laplace-cond-bk}
        \|B_k(u)-B_k(v)\|^2_{L^2(\mathcal{D})} \leq \gamma_k^2 \| |\nabla u|^{\frac{\alpha}{2}}-|\nabla v|^{\frac{\alpha}{2}}\|_{L^2(\mathcal{D})}^2 + C_k^2\|u-v\|_{L^2(\mathcal{D})}^2,
    \end{equation}
    where we assume $\sum_{k = 1}^\infty \gamma_k \leq \gamma^2$ and $\sum_{k = 1}^\infty C_k^2 < \infty$.
\end{assumption}
Next, we turn SPDE \eqref{stoch_p_laplacian} into a stochastic evolution equation of the form
\[
    \dint u(t) = A(u(t)) \dint t + \sum\limits_{k=1}^\infty B_k(u(t)) \dint W_k(t),
\]
where $A\colon W_0^{1, \alpha}(\mathcal{D}) \to W^{-1, \alpha}(\mathcal{D})$ is given by
\[
    \langle A(u), v\rangle = -\int_{\mathcal{D}}|\nabla u|^{\alpha-2} \nabla u \cdot \nabla v \dint x \qquad \text{for all } u, v \in W_0^{1, \alpha}(\mathcal{D}).
\]

\begin{proposition}
    Given Assumption~\ref{assumptions_p_laplacian}, there exists a unique solution to equation \eqref{stoch_p_laplacian}. Furthermore, there exists a constant $C$ depending on $\gamma$, $\alpha$ and $p$ such that the following estimate holds:
    \begin{equation*}
        \E\sup\limits_{t\in[0, T]} \|u(t)\|_{L^2(\mathcal D)}^p  + \E\Big(\int_0^T \|u(t)\|_{W_0^{1, \alpha}(\mathcal{D})}^\alpha \dint t\Big)^{\frac{p}{2}}
        \leq Ce^{CT}\E\|u_0\|_{L^2(\mathcal{D})}^p.
    \end{equation*}
\end{proposition}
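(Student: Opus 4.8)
The plan is to follow the scheme used in the previous applications: reformulate \eqref{stoch_p_laplacian} as the abstract equation $\dint u(t)=A(u(t))\dint t+\sum_{k\ge1}B_k(u(t))\dint W_k(t)$ with $A$ and $B$ as introduced above, verify that $(A,B)$ satisfies Assumptions~\ref{main_assumptions} \condref{it:hem}--\condref{it:bound2} with the given $\alpha$, $\beta=0$, $f=0$, and then invoke Theorem~\ref{Main_theorem}. A preliminary observation is that $(V,H,V^*)=(W_0^{1,\alpha}(\mathcal{D}),L^2(\mathcal{D}),W_0^{1,\alpha}(\mathcal{D})^*)$ is genuinely a Gelfand triple: since $\mathcal{D}$ is bounded and $\alpha>2$, one has $W_0^{1,\alpha}(\mathcal{D})\embed L^\alpha(\mathcal{D})\embed L^2(\mathcal{D})$ continuously and densely, and $W_0^{1,\alpha}(\mathcal{D})$ is reflexive. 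In contrast to the heat, Burgers and Navier--Stokes examples, here $B(v)^*v$ does not vanish, so the $p$-dependent term in \condref{it:coerc} is genuinely present, which is why $p$ must be restricted.

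The conditions \condref{it:hem}, \condref{it:bound1} and \condref{it:bound2} are routine. Hemicontinuity follows from dominated convergence: for fixed $u,v,w$ the integrand $|\nabla(u+\lambda v)|^{\alpha-2}\nabla(u+\lambda v)\cdot\nabla w$ is continuous in $\lambda$ and, for $\lambda$ in a bounded interval, dominated in $L^1(\mathcal{D})$ via $|\nabla(u+\lambda v)|^{\alpha-1}\lesssim|\nabla u|^{\alpha-1}+|\nabla v|^{\alpha-1}\in L^{\alpha/(\alpha-1)}(\mathcal{D})$ together with $\nabla w\in L^\alpha(\mathcal{D})$. For \condref{it:bound1}, Hölder's inequality gives $|\langle A(u),v\rangle|\le\|\nabla u\|_{L^\alpha(\mathcal{D})}^{\alpha-1}\|\nabla v\|_{L^\alpha(\mathcal{D})}$, hence $\|A(u)\|_{V^*}^{\alpha/(\alpha-1)}\le\|u\|_V^\alpha$, so \condref{it:bound1} holds with $K_A=1$, $\beta=0$, $f=0$. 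For \condref{it:bound2}, taking $v=0$ in \eqref{p-laplace-cond-bk}, using $B_k(0)=0$ and $\||\nabla v|^{\alpha/2}\|_{L^2(\mathcal{D})}^2=\|v\|_V^\alpha$, and summing over $k$, one obtains $\|B(v)\|_{\calL_2(\ell^2,L^2(\mathcal{D}))}^2\le\gamma^2\|v\|_V^\alpha+C_\ast^2\|v\|_H^2$ with $C_\ast^2:=\sum_{k\ge1}C_k^2<\infty$ and $\gamma^2$ as in Assumption~\ref{assumptions_p_laplacian}, i.e.\ \condref{it:bound2} with $K_\alpha=\gamma^2$, $K_B=C_\ast^2$, $f=0$.

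The heart of the matter is \condref{it:weak_mon} and \condref{it:coerc}, both of which rest on the classical monotonicity inequality for the $\alpha$-Laplace vector field: for $\alpha\ge2$ and $\xi,\eta\in\R^d$,
\[
\bigl(|\xi|^{\alpha-2}\xi-|\eta|^{\alpha-2}\eta\bigr)\cdot(\xi-\eta)\ \geq\ \frac{4(\alpha-1)}{\alpha^2}\bigl||\xi|^{\alpha/2-1}\xi-|\eta|^{\alpha/2-1}\eta\bigr|^2\ \geq\ \frac{4(\alpha-1)}{\alpha^2}\bigl(|\xi|^{\alpha/2}-|\eta|^{\alpha/2}\bigr)^2 .
\]
Integrating over $\mathcal{D}$ yields $\langle A(u)-A(v),u-v\rangle\le-\tfrac{4(\alpha-1)}{\alpha^2}\bigl\||\nabla u|^{\alpha/2}-|\nabla v|^{\alpha/2}\bigr\|_{L^2(\mathcal{D})}^2$. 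Summing \eqref{p-laplace-cond-bk} over $k$ and combining, the coefficient of $\bigl\||\nabla u|^{\alpha/2}-|\nabla v|^{\alpha/2}\bigr\|_{L^2(\mathcal{D})}^2$ in $2\langle A(u)-A(v),u-v\rangle+\|B(u)-B(v)\|_{\calL_2}^2$ is $-\tfrac{8(\alpha-1)}{\alpha^2}+\gamma^2$, which is $\le0$ precisely because $\gamma^2\le8\tfrac{\alpha-1}{\alpha^2}$; this gives \condref{it:weak_mon} with $\beta=0$ and $K=C_\ast^2$ (in fact $\langle A(u)-A(v),u-v\rangle\le0$, so $A$ is monotone). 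For \condref{it:coerc}, one has $\langle A(v),v\rangle=-\|v\|_V^\alpha$, while by Cauchy--Schwarz $\|B(v)^*v\|_{\ell^2}^2=\sum_{k\ge1}\bigl(B_k(v),v\bigr)_{L^2(\mathcal{D})}^2\le\|v\|_H^2\,\|B(v)\|_{\calL_2}^2$, so $\|B(v)^*v\|_{\ell^2}^2/\|v\|_H^2\le\|B(v)\|_{\calL_2}^2$. Hence the left-hand side of \condref{it:coerc} is bounded by $-2\|v\|_V^\alpha+(p-1)\|B(v)\|_{\calL_2}^2\le-\bigl(2-(p-1)\gamma^2\bigr)\|v\|_V^\alpha+(p-1)C_\ast^2\|v\|_H^2$, and the restriction $p<\tfrac{2}{\gamma^2}+1$ is exactly what forces $\theta:=2-(p-1)\gamma^2>0$. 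Thus \condref{it:coerc} holds with this $\theta$, $f=0$ and $K_c=(p-1)C_\ast^2$.

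Since $p\ge2=\beta+2$, all hypotheses of Theorem~\ref{Main_theorem} are met, and it delivers the unique solution (belonging in particular to $L^p(\Omega;C([0,T];L^2(\mathcal{D})))$) together with the claimed estimate, where $f=0$ because $A(0)=0$ and $B_k(0)=0$; the constant $C$ depends only on $\theta,K_c,K_\alpha,K_A,K_B$, hence ultimately on $\gamma,\alpha,p$. An additional nonlinear term $f(u)\dint t$ could be incorporated via the reasoning behind Remark~\ref{rem:additive}. I expect the only genuinely non-mechanical ingredient to be the sharp pointwise monotonicity inequality for the $\alpha$-Laplacian and the bookkeeping that matches its constant $\tfrac{4(\alpha-1)}{\alpha^2}$ to the two standing hypotheses $\gamma^2\le8\tfrac{\alpha-1}{\alpha^2}$ and $p<\tfrac{2}{\gamma^2}+1$; once the constants are lined up so that $\theta>0$, the remainder is a direct citation of the abstract theorem.
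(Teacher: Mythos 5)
Your proposal is correct and follows essentially the same route as the paper: verify \condref{it:hem}--\condref{it:bound2} for the stated $(A,B)$ with $\beta=0$, $f=0$, reduce \condref{it:weak_mon} to the monotonicity inequality for the $\alpha$-Laplace field with constant $\tfrac{4(\alpha-1)}{\alpha^2}$ (matched against $\gamma^2\le 8\tfrac{\alpha-1}{\alpha^2}$), bound the $p$-dependent term in \condref{it:coerc} by Cauchy--Schwarz to get $\theta=2-(p-1)\gamma^2>0$, and invoke Theorem~\ref{Main_theorem}. The only cosmetic differences are that the paper quotes the intermediate bound $\langle A(u)-A(v),u-v\rangle\le-\int(|\nabla u|^{\alpha-1}-|\nabla v|^{\alpha-1})(|\nabla u|-|\nabla v|)\dint x$ and the scalar inequality $2(x^{\alpha-1}-y^{\alpha-1})(x-y)\ge\gamma^2(x^{\alpha/2}-y^{\alpha/2})^2$ from the literature, whereas you work directly with the pointwise vector inequality and a dominated-convergence argument for hemicontinuity.
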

\begin{remark}
    An admissible choice for $B_k$ is $B_k(u) = \gamma_k |\nabla u|^{\frac{\alpha}{2}}$.
\end{remark}

\begin{proof}
    We show that \condref{it:hem}-\condref{it:bound2} hold for equation \eqref{stoch_p_laplacian} and can therefore apply Theorem~\ref{Main_theorem}. Hemicontinuity \condref{it:hem} can be found in \cite[p. 82]{Rockner_SPDE_2015}. For local weak monotonicity \condref{it:weak_mon}, take $u, v \in W_0^{1, \alpha}(\mathcal{D})$ and consider the following inequality which follows from \cite[p. 82]{Rockner_SPDE_2015}:
    \begin{equation}\label{H2_term1}
        2\langle A(u)-A(v), u-v\rangle \leq - 2\int_{\mathcal{D}} \left(|\nabla u|^{\alpha-1}-|\nabla v|^{\alpha-1}\right) (|\nabla u| - |\nabla v|) \dint x
    \end{equation}
    We now consider the other term for \condref{it:weak_mon}. By \eqref{p-laplace-cond-bk} we obtain
    \begin{equation}\label{H2_term2}
        \begin{aligned}
            \|B(u)-&B(v)\|_{\mathcal L_2(\ell^2,L^2(\mathcal D))}^2 \leq \sum\limits_{k=1}^\infty \|B_k(u)-B_k(v)\|^2_{L^2(\mathcal{D})}\\
            &\leq \sum\limits_{k=1}^\infty\gamma_k^2 \| |\nabla u|^{\frac{\alpha}{2}}-|\nabla v|^{\frac{\alpha}{2}}\|_{L^2(\mathcal{D})}^2 + \sum\limits_{i=1}^k C_k^2\|u-v\|_{L^2(\mathcal{D})}^2 \\
            &\leq \gamma^2 \| |\nabla u|^{\frac{\alpha}{2}}-|\nabla v|^{\frac{\alpha}{2}}\|_{L^2(\mathcal{D})}^2 + C \|u-v\|_{L^2(\mathcal{D})}^2.
        \end{aligned}
    \end{equation}
    The bounds \eqref{H2_term1} and \eqref{H2_term2} combine to
    \begin{equation*}
        \begin{split}
            &2\langle A(u)-A(v), u-v\rangle + \|B(u)-B(v)\|^2_{\mathcal L_2(\ell^2,L^2(\mathcal D))} \\
            &\leq -\int_{\mathcal{D}} \big((|\nabla u|^{\alpha-1}-|\nabla v|^{\alpha-1}|) (|\nabla u| - |\nabla v|) + \gamma^2 (|\nabla u |^{\frac{\alpha}{2}}- |\nabla v|^{\frac{\alpha}{2}})^2\big) \dint x\\
            & \quad + C\|u-v\|_{L^2(\mathcal{D})}^2\\
        \end{split}
    \end{equation*}
    Now \condref{it:weak_mon} follows from the inequality (which holds since $\gamma^2 \leq 8 \frac{\alpha-1}{\alpha^2}$) :
    \begin{equation*}
        2\left(x^{\alpha-1}-y^{\alpha-1}\right)(x-y) - \gamma^2 (x^{\frac{\alpha}{2}}-y^{\frac{\alpha}{2}})^2 \geq 0 \quad \text{for all $x,y \geq 0$}.
    \end{equation*}

    In order to show coercivity \condref{it:coerc} note that for $v \in W^{1,\alpha}_0(\mathcal D)$ we have
    \begin{equation*}
        2\langle A(v), v\rangle = -2\int_{\mathcal{D}}|\nabla v |^{\alpha} \dint x = -2\|v\|_{W_0^{1, \alpha}(\mathcal{D})}^\alpha
    \end{equation*}
    and using the Cauchy-Schwarz inequality, we obtain
    \begin{equation*}
        \frac{\|B_t(v)^*v\|_{\ell^2}^2}{\|v\|_{L^2(\mathcal{D})}^2} \leq \|B_t(v)^*\|_{\mathcal L_2(L^2(\mathcal D),\ell^2)}^2 = \|B_t(v)\|^2_{\mathcal L_2(\ell^2, L^2(\mathcal{D}))}.
    \end{equation*}
    Therefore, we conclude with the following $p$-dependent condition for \condref{it:coerc}:
    \begin{equation*}
        \begin{split}
            &2\langle A(v), v\rangle + \|B_t(v)\|_{\mathcal L_2(\ell^2,L^2(\mathcal D))}^2 + (p-2)\frac{\|B_t(v)^*v\|_{\ell^2}^2}{\|v\|_{L^2(\mathcal{D})}^2}\\
            &\leq \left((p-1)\gamma^2-2\right)\|v\|_{W^{1, \alpha}(\mathcal{D})}^\alpha + C\|v\|_{L^2(\mathcal{D})}^2.
        \end{split}
    \end{equation*}
    The first term on the RHS is negative by assumption. Therefore, \condref{it:coerc} holds with $\theta = 2-(p-1)\gamma^2$ and $f = 0$.

    We are only left to show the boundedness conditions \condref{it:bound1} and \condref{it:bound2}. For $v \in W^{1,\alpha}_0(\mathcal{D})$, we use H\"{o}lder's inequality to obtain:
    \begin{align*}
    |\langle A(u), v\rangle| &\leq \Big|\int_{\mathcal{D}} |\nabla u|^{\alpha-2} \nabla u\cdot \nabla v\dint x\Big|
            \\ & \leq \Big(\int_{\mathcal{D}}|\nabla u|^\alpha \dint x\Big)^{\frac{\alpha-1}{\alpha}}\Big(\int_{\mathcal{D}} |\nabla v|^\alpha \dint x\Big)^{\frac{1}{\alpha}}
            \leq \|u\|_{W_0^{1, \alpha}(\mathcal{D})}^{\alpha-1}\|v\|_{W_0^{1, \alpha}(\mathcal{D})}.
        \end{align*}
    Therefore, it follows for all $v \in W^{1, \alpha}_0(\mathcal{D})$ that $\|A(v)\|_{W^{-1, \alpha}(\mathcal{D})}^{\frac{\alpha}{\alpha-1}} \leq \|v\|_{W_0^{1, \alpha}(\mathcal{D})}^\alpha$,
    which entails \condref{it:bound1} with $K_A = \frac 1 2$ and $\beta = 0$. We omit \condref{it:bound2}, since it is clear by assumption.
\end{proof}

\bibliographystyle{plain}
\bibliography{literature}

\end{document}